\newtheorem{theorem}{Theorem}[section]
\newtheorem{conjecture}[theorem]{Conjecture}
\newtheorem{corollary}[theorem]{Corollary}
\newtheorem{lemma}[theorem]{Lemma}
\newtheorem{proposition}[theorem]{Proposition}
\newtheorem{problem}[theorem]{Problem}
\newtheorem{definition}[theorem]{Definition}
\newtheorem{observation}[theorem]{Observation}
\begin{document}

\title{{\bf Density of $3$-critical signed graphs}}
\author{
{\sc Laurent Beaudou}$\,^{a}$, {\sc Penny Haxell}$\,^{b}$, {\sc Kathryn Nurse}$\,^{c}$  \\
{\sc Sagnik Sen}$\,^{d}$, and {\sc Zhouningxin Wang}$\,^{e}$ \\
\mbox{}\\
{\small $(a)$ Universit\'e Clermont Auvergne, France}\\
{\small $(b)$ University of Waterloo, Waterloo ON, Canada}\\
{\small $(c)$ Simon Fraser University, Burnaby, Canada}\\
{\small $(d)$ Indian Institute of Technology Dharwad, India}\\
{\small $(e)$ School of Mathematical Sciences and LPMC, Nankai University, Tianjin, China}\\
{\small Emails: lbeaudou@gmail.com; pehaxell@uwaterloo.ca; knurse@sfu.ca;}\\ 
{\small sen007isi@gmail.com; wangzhou@nankai.edu.cn}
}

\date{}

\maketitle

\begin{abstract}
We say that a signed graph is \emph{$k$-critical} if it is not $k$-colorable but every one of its proper subgraphs is $k$-colorable. Using the definition of colorability due to Naserasr, Wang, and Zhu~\cite{NWZ21} that extends the notion of circular colorability, we prove that every $3$-critical signed graph on $n$ vertices has at least $\frac{3n-1}{2}$ edges, and that this bound is asymptotically tight. It follows that every signed planar or projective-planar graph of girth at least $6$ is (circular) $3$-colorable, and for the projective-planar case, this girth condition is best possible. To prove our main result, we reformulate it in terms of the existence of a homomorphism to the signed graph $C_{3}^*$, which is the positive triangle augmented with a negative loop on each vertex.
\end{abstract}

\noindent \textbf{Keywords:} Homomorphism, critical signed graphs, edge-density, circular coloring.

\section{Introduction}
For a graph property $P$, we say that a graph $G$ is \emph{critical} for $P$ if every proper subgraph of $G$ satisfies $P$ but $G$ itself does not. Thus in particular, every graph that is not $k$-colorable contains a critical subgraph for $k$-colorability, and hence the study of critical graphs for coloring has been of key importance in the study of chromatic numbers.

In 2014, Kostochka and Yancey~\cite{KY14,KY14C} proved the following precise lower bound on the density of graphs that are critical for $3$-colorability. Note that the term \emph{$4$-critical} is used instead of ``critical for $3$-colorability" in~\cite{KY14,KY14C} but we avoid it here for consistency.

\begin{theorem}{\rm \cite{KY14,KY14C}}\label{thm:C3critical}
If a graph $G$ is critical for $3$-colorability, then $|E(G)|\geq \frac{5|V(G)|-2}{3}$.
\end{theorem}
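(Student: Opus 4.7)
The plan is to use the \emph{potential method}. Define the potential $\rho(G) = 5|V(G)| - 3|E(G)|$, so that the desired bound $|E(G)|\ge\frac{5|V(G)|-2}{3}$ is equivalent to $\rho(G)\le 2$. I would argue by contradiction, choosing a counterexample $G$ minimising $|V(G)|$, so $\rho(G)\ge 3$ while every graph $H$ critical for $3$-colorability with $|V(H)|<|V(G)|$ already satisfies $\rho(H)\le 2$.

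Before the main argument I would collect easy reductions forced by criticality: every vertex of $G$ has degree at least~$3$, and $G$ is $2$-connected. (A vertex of degree at most $2$ could be deleted and any $3$-colouring of the remainder extended, while at a cut vertex one could $3$-colour each block separately since each contains a smaller critical subgraph, to which the minimality hypothesis applies.)

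The heart of the proof is a \emph{subset potential lemma}: for every proper subset $R\subsetneq V(G)$ with $|R|\ge 2$, the induced subgraph $G[R]$ satisfies an explicit lower bound on $\rho(G[R])$ that grows with $|R|$. The idea is to build from $R$ a strictly smaller auxiliary graph $G^\ast$ by contracting or identifying appropriate vertices of $V(G)\setminus R$; a $3$-colouring of $G^\ast$ would, by construction, restrict to a $3$-colouring of $G[R]$ that extends to all of $G$, contradicting $\chi(G)>3$. Since $G^\ast$ is therefore not $3$-colourable and has fewer vertices than $G$, it contains a critical subgraph to which the inductive bound $\rho\le 2$ applies; unwinding what this says about edges inside and leaving $R$ yields the claimed bound on $\rho(G[R])$.

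With these subset bounds in hand, I would finish by a local counting/discharging argument on $G$: apply the subset potential lemma to carefully chosen sets---typically the closed neighbourhood of a vertex of small degree, or small dense configurations surviving the structural reductions---and sum the resulting inequalities, weighted so that every vertex and edge of $G$ is charged with the right multiplicity, to obtain $\rho(G)\le 2$ and hence a contradiction. The main obstacle will be the subset potential lemma: one must design the identifications so that $G^\ast$ really is non-$3$-colourable (no $3$-colouring of $G^\ast$ lifts back to $G$) and so that the potential drop from $G$ to $G^\ast$ can be controlled tightly enough for the induction to close. Everything else is essentially bookkeeping once that lemma is available.
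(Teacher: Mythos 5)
The paper does not prove this theorem; it is quoted from Kostochka--Yancey, and the closest thing to a proof in the paper is the adaptation of the same potential method to the signed setting in Section~\ref{sec:MainProof} (with $\rho(G)=3v(G)-2e(G)$ in place of your $5|V|-3|E|$, and Lemma~\ref{lem: main lemma} playing the role of your ``subset potential lemma''). Your outline correctly identifies that strategy: minimal counterexample, lower bounds on the potential of proper subgraphs obtained by colouring a subgraph and collapsing it, then structural reductions and a final counting/discharging step. So the approach is the right one.

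However, as written the proposal has a genuine gap and one substantive error. The gap: the subset potential lemma, which you yourself call ``the main obstacle,'' is entirely deferred --- no construction is given, no constants are fixed, and the delicate part (showing the auxiliary graph is not $3$-colourable and that the potential drop closes the induction) is exactly where all the work lies; compare the full page of case analysis in the paper's Lemma~\ref{lem: main lemma}. The error: you say $G^\ast$ is built ``by contracting or identifying appropriate vertices of $V(G)\setminus R$.'' It is the other way around: one takes a $3$-colouring $\varphi$ of the small-potential subgraph $H=G[R]$ (which exists by criticality), identifies the vertices \emph{of $R$} lying in the same colour class, and keeps $V(G)\setminus R$ intact; a $3$-colouring of the resulting graph would then pull back to one of $G$. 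This is precisely what the paper does in Lemma~\ref{lem: main lemma} when it identifies $u,v\in V(\hat H)$ with $\psi(u)=\psi(v)$. Relatedly, the lower bound on $\rho(G[R])$ for proper subsets is a fixed constant (the analogue of the paper's $\rho(H)\geq 4$), not a quantity ``that grows with $|R|$.'' With the construction corrected and the lemma actually proved, the rest of your plan (minimum degree $3$, $2$-connectivity, structural exclusion of low-degree configurations, final counting) does match the known argument, but at present the proposal is a roadmap rather than a proof.
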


Their short proof of this theorem in~\cite{KY14C}, coupled with a standard argument about the density of planar graphs, provided a new and elegant proof of the classical theorem of Gr\"otzsch that every triangle-free planar graph is $3$-colorable. In addition, Theorem~\ref{thm:C3critical} resolved the first open case of a well-known and decades-old conjecture of Ore~\cite{O67} on the density of critical graphs for $k$-colorability for every $k$. In~\cite{KY14}, Kostochka and Yancey proved a corresponding density bound for general $k$, thus solving Ore's Conjecture exactly or almost exactly in every case.

Our aim in this paper is to address the analogous density question in the setting of signed graphs.  A \emph{signed graph} $(G, \sigma)$ is a graph $G$ together with a signature $\sigma: E(G) \to \{+, -\}$. Thus a graph $G$ can be regarded as a signed graph $(G, +)$ with all edges being positive (i.e., assigned with $+$). One of the main notions distinguishing signed graphs from $2$-edge-colored graphs (whose edges are simply partitioned into two distinct types) is the operation of vertex switching. A \emph{switching} at a vertex $v$ of a signed graph corresponds to multiplying the signs of all (non-loop) edges incident to $v$ by $-$. Thus the sign of a loop is invariant under switching. Two signed graphs are said to be \emph{switching equivalent} if one can be obtained from the other by a sequence of vertex switchings. Accordingly, meaningful parameters of signed graphs should be invariant under vertex switching.

In the seminal paper~\cite{Z82c}, Zaslavsky introduced a natural definition of coloring of signed graphs with an even number of colors, half of which are positive, the other half negative. This notion has been extended to odd numbers by M\'a\v{c}ajov\'a, Raspaud, and Škoviera~\cite{MRS16} by introducing the color 0, which has a special status, and in a different and more symmetric way by Naserasr, Wang, and Zhu~\cite{NWZ21} who generalized the definition of circular coloring of graphs to the case of signed graphs. In this paper, we use the latter definition, which we now describe.

 In the graph setting, a
\emph{circular $\frac{p}{q}$-coloring} of a graph $G$ is a mapping $f: V(G)\to \{0,1, \ldots, p-1\}$ such that for each edge $uv$, $q\leq |f(u)-f(v)|\leq p-q$. We can think of this as assigning to each vertex a color chosen from a circular arrangement of $p$ colors, such that adjacent vertices receive colors that are at distance at least $q$ on the circle. This well-studied concept refines the usual definition of coloring, coinciding with the definition of $k$-coloring when $k=\frac{p}{q}$ is an integer. For signed graphs, given positive integers $p, q$ with $p\geq 2q$ and with $p$ even, a \emph{circular $\frac{p}{q}$-coloring} of a signed graph $(G,\sigma)$ is a mapping $\varphi:V(G)\to \{0, 1,
\ldots, p-1\}$ such that
\begin{itemize}
\item for each positive edge $uv$, $q\leq |\varphi(u)-\varphi(v)|\leq p-q$ and 
\item for each negative edge $uv$, either $|\varphi(u)-\varphi(v)|\leq \frac{p}{2}-q$ or $|\varphi(u)-\varphi(v)|\geq \frac{p}{2}+q$.
\end{itemize}
Intuitively, vertices adjacent via a positive edge should have colors
at distance at least $q$ on the $p$-cycle of colors as in the graph
case, while for vertices $u$ and $v$ adjacent via a negative edge, the color of $u$ should be at distance at least $q$ from the
\emph{antipodal color} $\frac{p}2+\varphi(v) \pmod p$ of $v$.
The \emph{circular chromatic number} of $(G,\sigma)$ is defined to be $$\chi_c(G, \sigma) = \min\Big\{\frac{p}{q}\mid (G, \sigma) \text{ admits a circular $\frac{p}{q}$-coloring}\Big\}.$$
It is not difficult to see that these definitions are invariant under vertex switching. The notion of criticality then extends in a natural way, and in particular, we say that a signed graph $(G,\sigma)$ is \emph{(circular) $3$-critical} if $\chi_c(G, \sigma)>3$ but
$\chi_c(H, \sigma)\leq3$ for every proper subgraph $H$ of $G$.

Our main result gives a signed graphs analogue of Theorem~\ref{thm:C3critical}.
\begin{theorem}\label{thm:maincolresult}
If $(G,\sigma)$ is a signed graph that is (circular) 3-critical, then $$|E(G)|\geq \frac{3|V(G)|-1}{2}.$$ 
\end{theorem}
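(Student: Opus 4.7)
The plan is to first recast the statement in the homomorphism language promised by the abstract. One checks, following the Naserasr–Wang–Zhu framework, that $\chi_c(G,\sigma)\le 3$ holds exactly when $(G,\sigma)$ admits a signed-graph homomorphism to $C_{3}^*$: the three vertices of $C_{3}^*$ play the role of the three antipodal color-pairs in a circular $\frac{6}{2}$-coloring, positive edges of $G$ are routed along the positive triangle, and negative edges collapse onto negative loops (possibly after switching). Under this reformulation, a 3-critical signed graph is precisely one with $(G,\sigma)\not\to C_{3}^*$ but $(G-e,\sigma)\to C_{3}^*$ for every edge $e$, so the theorem becomes an edge-density claim about these minimal obstructions.

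The next step is a minimum-counterexample argument driven by the potential function $\rho(G):=3|V(G)|-2|E(G)|$; the target inequality is equivalent to $\rho(G)\le 1$ for every 3-critical signed graph. Assume for contradiction that $(G,\sigma)$ is a 3-critical signed graph with $\rho(G)\ge 2$, chosen to minimize $|V(G)|+|E(G)|$. Criticality supplies, for every edge $e=uv$, a homomorphism $\varphi_e:(G-e,\sigma)\to C_{3}^*$ that fails to extend to $G$, so $(\varphi_e(u),\varphi_e(v))$ must be incompatible with the sign of $e$. I would use this list-homomorphism toolkit to rule out simple local configurations: no vertex of degree one (any such extension is automatic using either a positive triangle edge or a negative loop of $C_{3}^*$), constraints on degree-two vertices forcing strong correlation among the partial homomorphisms of their neighbors, and gluing/identification arguments along small vertex cuts, allowing one to split $G$ and to recombine colorings.

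The crux is then the main structural reduction, in the style of Kostochka–Yancey: identify a subgraph that can be contracted or replaced by a smaller gadget, producing a smaller edge-critical signed graph $G'$ with $\rho(G')\ge\rho(G)$ and contradicting minimality. Two features of the signed setting complicate the classical picture, and this is where I expect the main difficulty to lie. First, switching equivalence means that individual edge signs are not invariants of $(G,\sigma)$, so every reducible configuration must be described up to switching and every reduction must commute with it. Second, the negative loops at every vertex of $C_{3}^*$ give negative edges substantially more flexibility than positive edges in a homomorphism, breaking the symmetry exploited in the unsigned case. The decisive step will probably be a carefully balanced discharging or potential-redistribution argument that tracks positive and negative edges separately, absorbs the looseness coming from switching, and pinpoints exactly which configurations can be reduced once the trivial local obstructions have been eliminated.
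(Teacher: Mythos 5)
Your outline correctly identifies the framework the paper uses---recasting circular $3$-colorability as the existence of a homomorphism to $C_3^*$, then running a minimum-counterexample argument on the potential $\rho(G)=3|V(G)|-2|E(G)|$ with target $\rho(G)\le 1$, finished by discharging---but it stops exactly where the proof begins. Every substantive step is deferred with ``I would use,'' ``I expect,'' and ``will probably be,'' and none of the three pillars that actually carry the argument is supplied. The engine of the potential method is a lemma asserting that in a minimum counterexample every subgraph $H$ satisfies $\rho(H)\ge 4$ except in three explicitly listed situations ($H=G$; $G$ obtained from $H$ by attaching one new $2$-vertex joined to two vertices of $H$; $H\in\{K_1,K_3\}$). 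Proving this requires a genuine construction: take a homomorphism $\psi$ of a minimal-potential subgraph $\hat H$ to $C_3^*$, identify vertices of $\hat H$ according to $\psi$, check that the quotient has no digon or positive loop (this is where the signed structure and switching genuinely bite), extract a $C_3^*$-critical subgraph of the quotient, invoke minimality to get $\rho\le 1$ for it, and reassemble it with $\hat H$ to contradict the choice of $\hat H$. Your description of the crux as ``contract or replace by a smaller gadget, producing a smaller edge-critical signed graph $G'$ with $\rho(G')\ge\rho(G)$'' does not match this mechanism and gives no indication of how the identification-and-pullback step would be carried out.

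Beyond that, the list of reducible configurations is specific and its verification is most of the work: no $2_1$-, $3_2$-, $4_4$-, or $5_5$-vertices; no subgraphs isomorphic to four particular theta-like graphs; every $3_1$-vertex lies in a positive triangle; triangles with certain degree profiles are excluded; and a $4_3$-vertex forces a rigid neighborhood in which its unique $3^+$-neighbor is adjacent to all three distance-two neighbors. Each of these needs its own extension argument exploiting the flexibility of $C_3^*$ (a path-extension observation and a list-colouring lemma for triangles), and the final discharging needs concrete rules (here: $3^+$-vertices pay $\frac{1}{2}$ to $2$-neighbors, ``rich'' vertices pay $\frac{1}{2}$ to $3_1$-neighbors, ``wealthy'' vertices pay $\frac{1}{2}$ to $4_3$-neighbors) together with a case analysis showing every vertex retains charge at least $3$. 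Since your proposal contains none of this content, it is a plan for a proof rather than a proof; the plan does coincide with the paper's strategy, but the mathematical substance is missing.
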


Moreover, we show in Section~\ref{sec:Conclusion} that there is an infinite sequence of such signed graphs whose edge density is precisely
$\frac{3}{2}$. Hence our density bound is asymptotically tight.

One essentially immediate corollary of Theorem~\ref{thm:maincolresult} is the following result (see Subsection~\ref{subsec:context}), which is analogous to 
the simple derivation of Gr\"otzsch's theorem from Theorem~\ref{thm:C3critical}.

\begin{corollary}\label{cor:Planar+ProjectivePlanar}
Let $G$ be a planar or projective-planar graph of girth at least $6$. Then for every signature $\sigma$ on $G$, the signed graph $(G, \sigma)$ is (circular) $3$-colorable.
\end{corollary}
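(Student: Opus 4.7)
The plan is a standard density argument, analogous to the derivation of Gr\"otzsch's theorem from Theorem~\ref{thm:C3critical}. Suppose for contradiction that some planar or projective-planar graph $G$ of girth at least $6$ admits a signature $\sigma$ with $\chi_c(G,\sigma)>3$. Then $(G,\sigma)$ must contain a (circular) $3$-critical subgraph $(H,\sigma|_{E(H)})$. The first thing to note is that $H$ inherits an embedding in the projective plane (or the plane) from $G$, and that the girth of $H$ is still at least $6$, since deleting edges or vertices cannot shorten cycles.

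The next step is to apply Euler's formula on the projective plane, whose Euler characteristic is $1$ (the worst of the two cases). Combining
\[
|V(H)|-|E(H)|+|F(H)| \geq 1
\qquad\text{with the girth-induced inequality}\qquad 6|F(H)| \leq 2|E(H)|,
\]
we obtain $|E(H)| \leq \frac{3(|V(H)|-1)}{2} = \frac{3|V(H)|-3}{2}$. On the other hand, Theorem~\ref{thm:maincolresult} supplies the lower bound $|E(H)| \geq \frac{3|V(H)|-1}{2}$. Together these force $3|V(H)|-1 \leq 3|V(H)|-3$, which is absurd. Hence no such $(G,\sigma)$ exists.

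There is essentially no obstacle once Theorem~\ref{thm:maincolresult} is in hand; the argument is routine. The only conceptual check is that girth $6$ is precisely the threshold where the two density bounds collide: for girth $5$, the Euler bound relaxes to $|E(H)| \leq \frac{5(|V(H)|-1)}{3}$, which exceeds $\frac{3|V(H)|-1}{2}$ asymptotically and leaves room for potential counterexamples. This sharpness is consistent with the corollary's later claim that the girth hypothesis is best possible in the projective-planar case.
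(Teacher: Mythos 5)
Your proof is correct and follows essentially the same route as the paper's: pass to a (circular) $3$-critical subgraph, combine Euler's formula for the plane or projective plane with the girth-$6$ face-counting bound to get $|E|\leq\frac{3|V|-3}{2}$, and contradict the density bound of Theorem~\ref{thm:maincolresult}. The only cosmetic difference is that you fold both surfaces into the projective-plane case via the Euler inequality, whereas the paper treats the genus as a parameter; the arithmetic and the conclusion are identical.
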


We show in Section~\ref{sec:Conclusion} that this girth bound is best possible for the class of signed projective-planar graphs. For the
planar case, it improves the previous best known bound of~$7$, proved
in~\cite{NSWX23}, but at present, we do not know whether the bound of~$6$ is tight. Constructions given in~\cite{NWZ21,KNNW23} show that the correct bound cannot be smaller than~$5$.

\paragraph{Homomorphisms.}
In fact, it will be more natural for us to formulate and prove
Theorem~\ref{thm:maincolresult} in terms of homomorphisms. Recall that
in the graph setting, a \emph{homomorphism} of a graph $G$ to a graph $H$ is a vertex mapping $\varphi: V(G)\to V(H)$ such that adjacency is preserved. This is a generalization of the definition of coloring, for example, any proper vertex $k$-coloring of $G$ can be viewed as a homomorphism of $G$ to the complete graph $K_{k}$, and it is well known that a graph admits a circular $\frac{2k+1}{k}$-coloring if and only if it admits a homomorphism to the odd cycle $C_{2k+1}$. For a given graph $H$, a graph $G$ is called \emph{$H$-critical} if $G$ does not admit a homomorphism to $H$, but every proper subgraph of $G$ does.

The definition of homomorphism extends to signed graphs $(G,\sigma)$ as follows. For a closed walk $W$ in $G$, the \emph{sign} of $W$ is the product of the signs of all the edges in $W$ (allowing repetition). A \emph{homomorphism} of $(G, \sigma)$ to another signed graph $(H, \pi)$ is a mapping of $V(G)$ to $V(H)$ such that both the adjacency and the signs of all closed walks are preserved. If there exists a homomorphism of $(G, \sigma)$ to $(H, \pi)$, then we write $(G, \sigma)\to (H, \pi)$. Again it is easy to see that the existence of a homomorphism is invariant under switching. The definition of criticality also extends in the natural way: given a signed graph $(H, \pi)$, a signed graph $(G, \sigma)$ is said to be \emph{$(H, \pi)$-critical} if $(G, \sigma)$ does not admit
a homomorphism to $(H, \pi)$, but every proper subgraph of $(G, \sigma)$ does. (More accurately, with certain girth conditions, see Definition~\ref{def:H-critical}.)

Our main interest in this paper is in (circular) $3$-coloring of signed graphs, which by definition is the case $\ell=3$ of circular $\frac{2\ell}{\ell-1}$-coloring. This sequence of rationals turns out to be of special interest and importance, in that (analogously to the graph case) it is closely related to the existence of homomorphisms of signed graphs to signed cycles. We write $C^*_{\ell}$ for a signed cycle of length $\ell$ with an odd number of positive edges, together with negative loops at each vertex, see Figure~\ref{fig:C_-k}. (Note that for fixed $\ell$, all such cycles are switching-equivalent.)

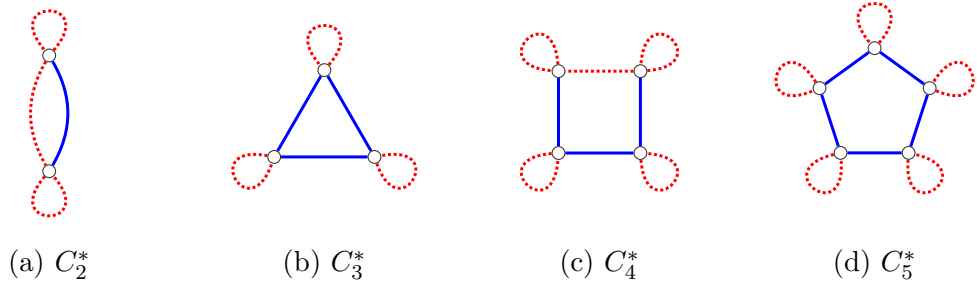
\begin{figure}[htbp]
	\centering
	\begin{subfigure}[t]{.22\textwidth}
		\centering
		\begin{tikzpicture}
			[scale=.22]
			\foreach \i in {1,2}
			{
				\draw[rotate=180*(\i-1)] (0, 3.5) node[circle, draw=black!80, inner sep=0mm, minimum size=1.7mm] (x_\i){};
			}
			
			\foreach \i/\j in {1/2}		
			{
				\draw [densely dotted, line width=0.4mm, red] (x_\i) edge[bend right] (x_\j);
			}
			
			\foreach \i/\j in {2/1}		
			{
				\draw [line width=0.4mm, blue] (x_\i) edge[bend right] (x_\j);
			}
			
			\foreach \i in {1,2}
			{
				\draw[rotate=180*(\i-1)] [densely dotted, line width=0.4mm, red] (x_\i) .. controls (3,7) and (-3,7) .. (x_\i);
			}	
		\end{tikzpicture}
		\caption{$C^*_{2}$}
	\end{subfigure}
	\begin{subfigure}[t]{.22\textwidth}
		\centering
		\begin{tikzpicture}
			[scale=.22]
			\foreach \i in {1,2,3}
			{
				\draw[rotate=120*(\i-1)] (0, 3.5) node[circle, draw=black!80, inner sep=0mm, minimum size=1.7mm] (x_\i){};
			}
			
			\foreach \i/\j in {1/2, 2/3, 3/1}
			{
				\draw  [line width=0.4mm, blue] (x_\i) -- (x_\j);
			}
			
			\foreach \i in {1,2,3}
			{
				\draw[rotate=120*(\i-1)] [densely dotted, line width=0.4mm, red] (x_\i) .. controls (3,7) and (-3,7) .. (x_\i);
			}	
		\end{tikzpicture}
		\caption{$C^*_{3}$}
	\end{subfigure}
	\begin{subfigure}[t]{.22\textwidth}
		\centering
		\begin{tikzpicture}
			[scale=.22]
			\foreach \i in {1,2,3,4}
			{
				\draw[rotate=45*(2*\i-1)] (0, 3.5) node[circle, draw=black!80, inner sep=0mm, minimum size=1.7mm] (x_\i){};
			}
			
			\foreach \i/\j in {1/2, 2/3, 3/4}
			{
				\draw  [line width=0.4mm, blue] (x_\i) -- (x_\j);
			}
			\foreach \i/\j in {4/1}
			{
				\draw  [densely dotted, line width=0.4mm, red] (x_\i) -- (x_\j);
			}
			
			\foreach \i in {1,2,3,4}
			{
				\draw[rotate=45*(2*\i-1)] [densely dotted, line width=0.4mm, red] (x_\i) .. controls (3,7) and (-3,7) .. (x_\i);
			}	
		\end{tikzpicture}
		\caption{$C^*_{4}$}
	\end{subfigure}
	\begin{subfigure}[t]{.22\textwidth}
		\centering
		\begin{tikzpicture}
			[scale=.22]
			\foreach \i in {1,2,3,4,5}
			{
				\draw[rotate=72*(\i-1)] (0, 3.5) node[circle, draw=black!80, inner sep=0mm, minimum size=1.7mm] (x_\i){};
			}
			
			\foreach \i/\j in {1/2, 2/3, 3/4, 4/5, 5/1}
			{
				\draw  [line width=0.4mm, blue] (x_\i) -- (x_\j);
			}
			
			\foreach \i in {1,2,3,4,5}
			{
				\draw[rotate=72*(\i-1)] [densely dotted, line width=0.4mm, red] (x_\i) .. controls (3,7) and (-3,7) .. (x_\i);
			}	
		\end{tikzpicture}
		\caption{$C^*_{5}$}
	\end{subfigure}
	\caption{Signed graphs $C^*_{\ell}$. Solid blue edges are positive, dashed red edges are negative.}
	\label{fig:C_-k}
\end{figure}

The following fact from~\cite{NW22+} gives a characterization of circular $\frac{2\ell}{\ell-1}$-colorable signed graphs in terms of 
 homomorphisms.

\begin{proposition}{\rm \cite{NW22+}}\label{Prop:HomoToCycleAndCircularColoring}
A signed graph admits a circular $\frac{2\ell}{\ell-1}$-coloring if and only if it admits a homomorphism to $C^*_{\ell}$.
\end{proposition}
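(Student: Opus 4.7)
The plan is to realise the homomorphism via the natural projection $\psi(v) := \varphi(v) \bmod \ell$, paired with a switching of the source dictated by which half of $\mathbb{Z}_{2\ell}$ each colour lies in. Geometrically, this quotients the circle of $2\ell$ colours by the antipodal involution $x \mapsto x + \ell$, and under this quotient the target of circular $\frac{2\ell}{\ell - 1}$-coloring collapses onto $C^*_\ell$. Throughout, I would use the standard fact that a signed-graph homomorphism $(G, \sigma) \to C^*_\ell$ is the same as a sign-preserving map to \emph{some} representative of the switching class of $C^*_\ell$ after a suitable switching of $(G, \sigma)$.

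For the forward direction, suppose $\varphi : V(G) \to \{0, 1, \ldots, 2\ell - 1\}$ is a circular $\frac{2\ell}{\ell-1}$-coloring, set $\psi(v) := \varphi(v) \bmod \ell$, and switch $(G, \sigma)$ at every vertex $v$ with $\varphi(v) \geq \ell$, obtaining $(G, \sigma')$. A case analysis indexed by $|\varphi(u) - \varphi(v)| \in \{\ell - 1, \ell, \ell + 1\}$ for positive edges and $|\varphi(u) - \varphi(v)| \in \{0, 1, 2\ell - 1\}$ for negative edges, together with the four possibilities for whether $\varphi(u), \varphi(v)$ each lie in the upper half $\{\ell, \ldots, 2\ell - 1\}$, shows that each edge $uv$ is sent by $\psi$ either to a negative loop of $C^*_\ell$ (exactly when $\psi(u) = \psi(v)$) or to a cycle edge, and that $\sigma'(uv)$ is determined by the image: the cycle edge $\{0, \ell - 1\}$ receives sign $+$ while every other cycle edge receives sign $-$. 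Since this signature has exactly one positive cycle edge, it is a valid representative of $C^*_\ell$, so $\psi$ is the required homomorphism.

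For the reverse direction, I would first verify that a representative of $C^*_\ell$ admits the circular coloring $\varphi^*(i) := (\ell - 1) i \bmod 2\ell$: consecutive cycle vertices land at cyclic distance $\ell - 1$, the closing pair $\{\ell - 1, 0\}$ lands at cyclic distance $\ell - 1$ when $\ell$ is odd and at cyclic distance $1$ when $\ell$ is even, and the negative loops are trivially satisfied. Given any homomorphism $\psi : (G, \sigma) \to C^*_\ell$, switch $(G, \sigma)$ so that edge signs coincide with those of the images, compose with $\varphi^*$, and then compensate for the switching by shifting the colour at each switched vertex by $\ell$ modulo $2\ell$; the last step is valid because this shift swaps the positive- and negative-edge constraints. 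The main obstacle is the bookkeeping in the forward case analysis; nothing deeper is needed once one recognises that the rule ``switch at $v$ iff $\varphi(v) \geq \ell$'' is precisely what aligns the coloring-induced signs with those of the canonical $C^*_\ell$.
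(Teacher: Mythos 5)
Your argument is correct, but note that the paper does not actually prove this proposition: it is quoted from~\cite{NW22+}, and the only supporting material in the paper is the general equivalence (from~\cite{NWZ21}) between circular $\frac{p}{q}$-colorings, edge-sign preserving homomorphisms to $K^s_{p;q}$, and homomorphisms to the switching core $\hat{K}^s_{p;q}$, together with the remark that $\hat{K}^s_{2\ell;\ell-1}$ is switching isomorphic to $C^*_\ell$. What you have written is essentially a self-contained, hands-on instantiation of exactly that identification: your rule ``switch at $v$ iff $\varphi(v)\geq \ell$ and reduce mod $\ell$'' is precisely the antipodal quotient that produces $\hat{K}^s_{2\ell;\ell-1}$ from $K^s_{2\ell;\ell-1}$, and your case analysis correctly lands every edge on a negative loop, a negative cycle edge $\{i,i+1\}$, or the unique positive cycle edge $\{0,\ell-1\}$, giving a representative with an odd number (one) of positive edges. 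The reverse direction also checks out: $\varphi^*(i)=(\ell-1)i \bmod 2\ell$ colors the all-positive $\ell$-cycle when $\ell$ is odd and the cycle with exactly one negative closing edge when $\ell$ is even, and in both parities the number of positive cycle edges is odd, so the representative is in the switching class of $C^*_\ell$; one should just say explicitly that the representative used here differs from the one in the forward direction, which is harmless since homomorphism existence is invariant under switching the target. The trade-off is the expected one: the paper's route gets the statement for free from the general circular clique theory, while your direct computation makes the special case $\frac{2\ell}{\ell-1}$ transparent without importing that machinery.
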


Proposition~\ref{Prop:HomoToCycleAndCircularColoring} implies the reformulation of Theorem~\ref{thm:maincolresult} that will be our main focus from now on.

\begin{theorem}\label{thm:main result}
Every $C^*_3$-critical signed graph $(G,\sigma)$ satisfies $|E(G)|\geq \frac{3|V(G)|-1}{2}$.
\end{theorem}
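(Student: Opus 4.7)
The plan is to adapt the potential-function method of Kostochka and Yancey~\cite{KY14,KY14C} to the signed setting. Define the potential $\rho(H) := 3|V(H)| - 2|E(H)|$ for every signed graph $H$, so that Theorem~\ref{thm:main result} becomes the statement that $\rho(G) \leq 1$ for every $C_{3}^{*}$-critical signed graph $(G,\sigma)$. I would argue by induction on $|V(G)|$, taking the small $C_{3}^{*}$-critical signed graphs as directly verified base cases. A useful preliminary reformulation: a partition $V(G) = V_{1} \cup V_{2} \cup V_{3}$ encodes a homomorphism $(G,\sigma) \to C_{3}^{*}$ if and only if, for every cycle $C$, the parity of the number of within-part edges of $C$ agrees with the sign of $C$ under $\sigma$. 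This rephrases the extension of a homomorphism of $G - v$ to $v$ as a constraint on a single ``switch/not-switch'' bit at $v$: once $\varphi(v) \in \{1,2,3\}$ is chosen, each neighbor of $v$ forces this bit to one value, and the extension succeeds if and only if these forces agree.

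Assume for contradiction that $(G,\sigma)$ is a minimum counterexample with $\rho(G) \geq 2$. Using criticality, I would first derive structural properties: every vertex has degree at least $3$ (for degree at most $2$, at least one of the three colour-class choices at $v$ produces at most two sign constraints on the switch bit, which are always simultaneously satisfiable), and $G$ contains no small reducible configurations, nor any small separator along which one could split off a proper $C_{3}^{*}$-critical subgraph with smaller $\rho$. I expect the heart of the proof to consist of identifying a proper subgraph $R \subsetneq G$ together with a reduction operation (deleting an edge or identifying two boundary vertices of $R$) producing a smaller signed graph $G'$ that is still not $C_{3}^{*}$-colorable. By the minimality assumption, $G'$ contains a $C_{3}^{*}$-critical subgraph $G''$ satisfying $\rho(G'') \leq 1$; careful bookkeeping of the change in $\rho$ across the reduction should then yield $\rho(G) \leq 1$, the desired contradiction.

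The main obstacle will be designing the identification step correctly in the signed context, because identifying two vertices $u, v$ is not an intrinsic signed-graph operation: the resulting graph depends on whether a future homomorphism places $u$ and $v$ in the same part or in different parts, and the signs of the affected edges and the parity of surrounding cycles must reflect this choice. I expect the right approach is to run two parallel candidate reductions for each prospective pair $(u,v)$, one implementing ``$\varphi(u) = \varphi(v)$'' by adding a negative edge and the other implementing ``$\varphi(u) \neq \varphi(v)$'' by adding a positive edge, and to argue that at least one of them preserves the non-$C_{3}^{*}$-colorability of $G$. Equally delicate will be verifying that the cycle-parity encoding of $C_{3}^{*}$-colorings behaves predictably under these identifications, so that a $C_{3}^{*}$-critical subgraph of the reduction truly lifts to a structure bounding $\rho(G)$. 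Once this identification mechanism is in place, a Kostochka--Yancey-style counting of vertices and edges across the reduction should complete the proof and confirm the tight bound $\rho(G) \leq 1$.
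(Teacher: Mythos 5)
Your high-level framing matches the paper's: define $\rho(H)=3v(H)-2e(H)$, take a minimum counterexample with $\rho(G)\geq 2$, and run the Kostochka--Yancey potential method, whose engine is exactly the identification step you describe (the paper's key lemma identifies the vertices of a low-potential subgraph $\hat{H}$ along a homomorphism $\hat{H}\to C_3^*$, extracts a critical subgraph of the quotient, and splices it back onto $\hat{H}$ to contradict the choice of $\hat{H}$). Your cycle-parity encoding of $C_3^*$-colorings is also correct, and is equivalent to the paper's device of switching to an edge-sign-preserving homomorphism; your ``add a positive or a negative edge'' trick for identifications does appear in the paper's reductions.

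However, there is a concrete false step. You claim every vertex of a minimum counterexample has degree at least $3$, on the grounds that at most two sign constraints on the switch bit are always simultaneously satisfiable. They are not: two constraints on a single bit conflict whenever their required product disagrees with the product of the actual edge signs. Concretely, if $v$ has degree $2$ with neighbors $x,y$, if $\varphi(x)=\varphi(y)$, and if the path $xvy$ is negative, then every choice of $\varphi(v)$ forces the two edges to have required signs with product $+$ while the actual product is $-$, so no switch at $v$ works (this is precisely case 1 of the paper's path-extension observation). Indeed, $C_3^*$-critical signed graphs do contain $2$-vertices: the paper's $5$-vertex example $\hat{W}$ attaining $e=\frac{3v-1}{2}$ has one, and the tight family $\hat{S}_k$ has $k$ of them. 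This is not cosmetic: if minimum degree $3$ held, the bound $e(G)\geq \frac{3v(G)}{2}$ would be immediate and the theorem trivial, whereas the real difficulty is controlling how $2$-vertices attach to their neighbors. Relatedly, your outline stops at the potential lemma and never explains how the global bound follows; the paper still needs a long list of forbidden configurations (no $2_1$-, $3_2$-, $4_4$-vertices, forbidden triangles through $3_1$-vertices, forbidden theta subgraphs, and so on), all driven by the presence of $2$-vertices, and then a discharging argument to convert these into $\rho(G)\leq 1$. As written, the proposal has the right skeleton but rests on a wrong structural claim and supplies no mechanism for the final counting.
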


The rest of the paper is organized as follows. In the next subsection, we give the (simple) proof of Corollary~\ref{cor:Planar+ProjectivePlanar}, and also outline how our main results relate to other previous work on graphs and signed graphs. While this material is not essential to the understanding of this paper, it provides further motivation and places our results in a broader context. In Section~\ref{sect:prelim}, we give preliminary background on signed graphs and homomorphisms of signed graphs. In particular, in Section~\ref{sec:PropertiesOfCriticalGraphs}, we provide some basic properties of $C^*_3$-critical signed graphs. In Section~\ref{sec:MainProof}, we use the potential method employed in~\cite{KY14}, adapted to our setting, to find more forbidden configurations in the minimum counterexample of our main theorem (Theorem~\ref{thm:main result}) and use the discharging technique to complete the proof. In Section~\ref{sec:Conclusion}, we show that the edge density bound of Theorem~\ref{thm:main result} is asymptotically tight, and the girth bound of Corollary~\ref{cor:Planar+ProjectivePlanar} for the class of signed projective-planar graphs is tight. We also pose some open questions there.

\subsection{Further Context}\label{subsec:context}
As previously noted, results such as Theorem~\ref{thm:C3critical} and Theorem~\ref{thm:main result} have direct implications for colorings (or more generally homomorphisms) of graphs whose densities are bounded above, for example, graphs embedded on surfaces or graphs with large girth. We see this explicitly in the following proof of Corollary~\ref{cor:Planar+ProjectivePlanar}. 

\smallskip
\noindent
{\em Proof of Corollary~\ref*{cor:Planar+ProjectivePlanar}.} 
Let ${G= (V, E)}$ be a planar or projective-planar graph of girth
at least $6$, and $\sigma$ be a signature on $G$. If $(G, \sigma)$ is not $3$-colorable, then by Proposition~\ref{Prop:HomoToCycleAndCircularColoring} we may assume without loss of generality that it is $C_{3}^*$-critical. Consider a plane or
projective-plane embedding of $G$, and let its set of faces be denoted by $F$. Euler's formula states that $|V|-|E|+|F| = 2-g$ where $g$ is the genus of the surface in which the graph is embedded ($0$ for the plane
and $1$ for the projective plane). The girth condition applied to the embedding gives that $|E| \ge 3|F|$. Hence we obtain that $|E| \leq \frac{3|V| - 3(2-g)}{2}$, which contradicts Theorem~\ref{thm:main result}. \hfill $\Box$

\smallskip
In classical graph theory, one major motivation for proving lower bounds on the density of critical graphs was Jaeger's famous Circular Flow Conjecture~\cite{J84,J88}, which was recently disproved for $k\geq 3$ by Han, Li, Wu, and Zhang~\cite{HLWZ18}. However, its planar restriction remains open and can be stated as follows.

\begin{conjecture}{\rm \cite{J84}}\label{conj:Jaeger-restriction}
For any integer $k\geq 1$, every planar graph of girth at least $4k$ admits a homomorphism to $C_{2k+1}$.
\end{conjecture}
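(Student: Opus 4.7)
The plan is to mirror the density-based derivation of Corollary~\ref{cor:Planar+ProjectivePlanar} (and of Gr\"otzsch's theorem from Theorem~\ref{thm:C3critical}). The strategy is to establish a sharp lower bound on the edge density of graphs that are critical with respect to having a homomorphism to $C_{2k+1}$, and then to combine this bound with Euler's formula applied under the girth hypothesis.

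First, I would set up the critical framework in the ordinary graph setting: call $G$ a \emph{$C_{2k+1}$-critical} graph if $G$ admits no homomorphism to $C_{2k+1}$ but every proper subgraph of $G$ does. By standard arguments, any counterexample to the conjecture contains a $C_{2k+1}$-critical subgraph, which is itself planar and of girth at least $4k$, so it suffices to forbid such subgraphs.

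Second, and this is the main step, I would aim to prove that every $C_{2k+1}$-critical graph $G$ satisfies a density bound of the form
\[
|E(G)| \;\geq\; \frac{2k\,|V(G)| - c_k}{2k-1}
\]
for some constant $c_k<4k$. For $k=1$ such a bound (in even stronger form) is Theorem~\ref{thm:C3critical}. For general $k$, I would adapt the Kostochka-Yancey potential method: define a potential $\rho(H) = (2k-1)|E(H)| - 2k|V(H)|$ on subgraphs, choose a minimum counterexample minimising $\rho$, and use the homomorphism-extension hypothesis on proper subgraphs to rule out low-degree vertices and to identify pairs of non-adjacent vertices that every extending homomorphism would be forced to send to the same image in $C_{2k+1}$. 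A discharging argument on the resulting sparse, high-girth structure then converts these reducible configurations into the required edge-count inequality.

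Third, once the density bound is in hand, Euler's formula finishes the job: a planar graph $G$ of girth at least $4k$ satisfies $2|E|\ge 4k|F|$, which together with $|V|-|E|+|F|=2$ gives $|E|\leq \tfrac{2k(|V|-2)}{2k-1}$, contradicting the density bound on a $C_{2k+1}$-critical subgraph once $|V|$ exceeds a small explicit threshold; the remaining small cases can be checked directly.

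The hard part will certainly be the density bound for $k\ge 3$. The potential method demands a careful structural analysis of the configurations around low-degree vertices, and this analysis grows considerably in complexity with $k$, because one has to track how partial homomorphisms to $C_{2k+1}$ behave under an increasing variety of vertex identifications (there is no simple "greedy three colors left over" shortcut once $k\ge 2$). The discharging scheme itself should be tractable once a sufficiently rich list of forbidden configurations has been assembled, so I would expect nearly all of the effort to be concentrated in proving the reducibility lemmas uniformly in $k$.
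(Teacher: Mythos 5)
The statement you set out to prove is not a result of this paper at all: it is Jaeger's planar circular-flow conjecture, reproduced verbatim from \cite{J84} as Conjecture~\ref{conj:Jaeger-restriction}, and the paper explicitly records that it \emph{remains open} for every $k\geq 2$ (the paper cites it only as context: the case $k=1$ is Gr\"otzsch's theorem, girth $10$ suffices for $k=2$ \cite{DP17}, girth $16$ for $k=3$ \cite{PS22}, and girth $6k$ in general \cite{LTWZ13}; the non-planar version of Jaeger's conjecture is even false for $k\geq 3$ \cite{HLWZ18}). So there is no proof in the paper to compare against, and your proposal has to stand on its own as a solution to a forty-year-old open problem. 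It does not: everything is delegated to the density bound in your second step, which is only promised (``adapt the Kostochka--Yancey potential method''), not proved.

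More importantly, that density bound is false, so no amount of work on reducibility lemmas and discharging can rescue the plan. Your Euler computation correctly identifies what is needed: every $C_{2k+1}$-critical graph would have to satisfy $|E(G)|\geq \frac{2k|V(G)|-c_k}{2k-1}$ with $c_k<4k$, i.e.\ have edge-density ratio at least $\frac{2k}{2k-1}$. For $k=1$ this demands that graphs critical for $3$-colorability have at least $2|V(G)|-c_1$ edges; you assert that Theorem~\ref{thm:C3critical} is ``an even stronger form'' of this, but the comparison runs the other way: $\frac{5}{3}<2$, and the Kostochka--Yancey bound is \emph{exact} --- Ore's construction \cite{O67,KY14C} produces arbitrarily large $4$-critical graphs with precisely $\frac{5|V(G)|-2}{3}$ edges, which violate your required inequality for all large $|V(G)|$. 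Likewise for $k=2$, the bound $\frac{5|V(G)|-2}{4}$ of \cite{DP17} is attained by $C_5$-critical graphs, and $\frac{5}{4}<\frac{4}{3}$. Thus in every case where the extremal density of $C_{2k+1}$-critical graphs is known, it lies strictly below the threshold your argument needs. This also explains why even the true case $k=1$ (Gr\"otzsch's theorem) cannot be derived from a density bound plus Euler's formula alone: density gives only the girth-$5$ statement, and closing the gap from girth $5$ to girth $4$ requires a separate structural argument (identifying opposite vertices of a quadrilateral face). Any viable version of your strategy would have to prove density bounds for critical graphs that are themselves planar and of girth at least $4k$ --- and that is exactly where the open difficulty of Conjecture~\ref{conj:Jaeger-restriction} lies, not in the discharging bookkeeping.
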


For general $k$, the best result is due to Lov\'asz, Thomassen, Wu, and Zhang~\cite{LTWZ13}, that the girth condition $6k$ is sufficient. For small values of $k$, tighter results are known. The case $k=1$ is simply Gr\"{o}tzsch's theorem~\cite{G58}; for $k=2$, it has been verified by Dvo\v{r}\'ak and Postle~\cite{DP17} for the girth
condition $10$; for $k=3$, the best-known girth bound of $16$ has very recently been achieved by Postle and Smith-Roberge~\cite{PS22}. The same results for $k=2,3$ were independently obtained by Cranston and Li~\cite{CL20} using the notion of flows. 
The results of~\cite{DP17} and~\cite{PS22} are each proved by establishing lower bounds on the density of $C_{2k+1}$-critical graphs, as follows.

\begin{theorem}{\rm \cite{DP17}}
Every $C_{5}$-critical graph $G$ except $C_{3}$ satisfies $|E(G)|\geq \frac{5|V(G)|-2}{4}$.
\end{theorem}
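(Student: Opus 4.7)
The plan is to adapt the potential method of Kostochka--Yancey to the homomorphism-to-$C_5$ setting. Set $\rho(H) := 5|V(H)| - 4|E(H)|$, so the desired inequality is equivalent to $\rho(G) \leq 2$ for every $C_5$-critical $G \neq C_3$. Note that $\rho(C_3) = 3$, which pinpoints exactly why $C_3$ must be excluded. I would argue by contradiction: take a $C_5$-critical graph $G \neq C_3$ with $\rho(G) \geq 3$ minimizing $|V(G)| + |E(G)|$, and extract a structural contradiction. One easy preliminary: $G$ must be triangle-free, since otherwise a proper subgraph $C_3$ would fail to map to $C_5$, contradicting criticality.

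The heart of the argument is a potential-extension lemma: every proper subset $R \subsetneq V(G)$ with $2 \leq |R| < |V(G)|$ satisfies $\rho(G[R]) \geq 3$. To prove it, take a smallest violating $R$; by $C_5$-criticality the graph $G - R$ (or a variant obtained by attaching a small boundary gadget reflecting the adjacencies between $R$ and $V(G)\setminus R$) admits a homomorphism to $C_5$, and one attempts to extend the homomorphism over $R$. The standard move is to replace $R$ by a smaller gadget with the same ``$C_5$-colorability profile'' at its boundary but strictly smaller potential; the minimality of $G$ then yields a contradiction either through an extension of a homomorphism, or by producing a smaller $C_5$-critical graph of potential at least $\rho(G)$.

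With the potential lemma in hand, I would derive a list of reducible local configurations, such as: no vertex has degree $\leq 1$; paths of consecutive degree-$2$ vertices have bounded length; and certain short-cycle incidences with degree-$2$ vertices are forbidden. Each is eliminated by exhibiting a smaller $C_5$-critical graph of potential at least $\rho(G)$, contradicting the minimal choice of $G$. I would then run a discharging argument with initial charge $5 - 2\deg(v)$ on each vertex $v$, so that the total charge is exactly $\rho(G) \geq 3$. Local redistribution rules calibrated to the forbidden configurations — routing charge from high-degree vertices to short degree-$2$ paths — should force every vertex to end with non-positive charge, contradicting $\rho(G) \geq 3$.

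The hard part will be the potential-extension lemma. Because $C_5$ has much weaker symmetry than $K_3$, the space of partial homomorphisms to $C_5$ is substantially more rigid, so designing the replacement gadgets and verifying that they preserve exactly the right boundary extension profile requires a delicate case analysis on neighborhoods in $C_5$. A secondary challenge is bookkeeping: the list of forbidden configurations must be rich enough that the discharging rules can be made to balance, which typically requires iterating between strengthening the reducibility lemmas and refining the discharging scheme.
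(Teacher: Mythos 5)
This statement is not proved in the paper at all: it is quoted from Dv\v{o}r\'ak and Postle \cite{DP17} purely as context for the signed-graph analogue, so there is no in-paper proof to compare against. That said, your outline does correctly reverse-engineer the method of \cite{DP17} and of Section~\ref{sec:MainProof} of this paper: the potential $\rho(H)=5|V(H)|-4|E(H)|$ turns the claim into $\rho(G)\le 2$, the computation $\rho(C_3)=3$ explains the exceptional graph, triangle-freeness of any other critical graph is immediate, the total initial charge $\sum_v(5-2\deg(v))$ equals $\rho(G)$, and the overall architecture (minimal counterexample, lower bound on potentials of proper subgraphs via quotient/gadget replacement, reducible configurations, discharging) is exactly the right one.

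The genuine gap is that every step carrying actual mathematical content is left as a placeholder, and those placeholders are the entire theorem. Your potential-extension lemma is asserted in a form ($\rho(G[R])\ge 3$ for all proper $R$ with $|R|\ge 2$) that cannot be taken on faith: dense bipartite subgraphs such as $K_{2,3}$ have potential $25-24=1$, so the lemma is really the claim that a minimal counterexample contains no such subset, and proving it requires exhibiting the replacement gadget, verifying it preserves the boundary extension profile, and handling the exceptional cases (compare Lemma~\ref{lem: main lemma} here, which needs the exceptions $\hat{G}=\hat{H}$, $\hat{G}=P_2(\hat{H})$, $K_1$, $K_3$). Moreover, the quotient construction that works painlessly for a $3$-vertex target is delicate for $C_5$: identifying vertices of $R$ along one homomorphism can create short odd cycles, so the critical subgraph you extract from the quotient may be the exceptional $C_3$ itself, which breaks the potential accounting unless specifically excluded. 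Finally, the list of reducible configurations and the discharging rules are described only by their intended effect (``should force every vertex to end with non-positive charge''), not exhibited, and whether such a list can be made to balance is precisely what has to be proved. As written this is a correct plan of attack, not a proof.
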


\begin{theorem}{\rm \cite{PS22}}
Every $C_{7}$-critical graph $G$ except $C_{3}$ and $C_{5}$ satisfies $|E(G)|\geq \frac{17|V(G)|-2}{15}$.
\end{theorem}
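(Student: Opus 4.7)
The plan is to adapt the potential method of Kostochka and Yancey that proves Theorem~\ref{thm:C3critical}, in the refined form developed by Dvořák--Postle for $C_5$-critical graphs. For a graph $G$ and any $R \subseteq V(G)$, define the potential
$$\rho_G(R) = 17|R| - 15|E(G[R])|.$$
The inequality to be proved is exactly $\rho_G(V(G)) \leq 2$ for every $C_7$-critical graph $G$ other than $C_3$ and $C_5$. I would argue by contradiction: let $G$ be a counterexample with $|V(G)|$ minimum, so $G$ is $C_7$-critical, $G \notin \{C_3,C_5\}$, and $\rho_G(V(G)) \geq 3$.

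The central ingredient is a \emph{potential lemma}: for every nontrivial proper subset $R \subsetneq V(G)$ with $|R| \geq 2$ (excluding a short list of small degenerate subgraphs), one has $\rho_G(R) \geq T$ for a carefully chosen threshold $T$. The proof uses the criticality of $G$: by minimality, $G[R]$ admits a homomorphism $\varphi: G[R] \to C_7$. One then attempts to form a smaller graph $G'$ from $G$ by identifying the fibers $\varphi^{-1}(v)$ of $\varphi$ inside $G$, producing a quotient on fewer vertices. Provided $G'$ contains a $C_7$-critical subgraph that is not $C_3$ or $C_5$, the minimality hypothesis gives $\rho_{G'}(V(G')) \leq 2$; translating this back through the contraction yields the desired lower bound on $\rho_G(R)$. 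The statement must be organized so that the exceptional cases $C_3,C_5$ appearing in the quotient can be ruled out or absorbed.

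From the potential lemma I would extract the reducible configurations in $G$. A first consequence is $\delta(G)\ge 3$: a leaf extends trivially, and a degree-$2$ vertex produces a very small $R$ whose potential is forced to be too large. Next come constraints on adjacent low-degree vertices and on short odd cycles meeting the low-degree part of $G$; these are analogous to the configurations in~\cite{DP17,PS22}, but tuned to the $17/15$ density. The final step is a \emph{discharging argument}: assign each $v$ the initial charge $\mu(v) = 15d(v) - 34$, so that
$$\sum_{v\in V(G)}\mu(v) = 30|E(G)| - 34|V(G)| = -2\rho_G(V(G)) \leq -6.$$
Design local rules moving charge from high-degree vertices to low-degree vertices (with special care around short odd cycles), and show, using the forbidden configurations, that the final charge of every vertex is non-negative. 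This contradicts the negative sum above.

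The main obstacle, by a wide margin, is the potential lemma and the identification step it rests on. When one contracts a subset $R$ via a homomorphism to $C_7$, the quotient can acquire parallel edges, short odd cycles, or large $C_7$-colorable subgraphs that collapse the criticality, and the small target $C_7$ leaves much less slack than the target $K_3$ in the Kostochka--Yancey setting. Careful case analysis is needed to guarantee that a $C_7$-critical subgraph different from $C_3$ and $C_5$ survives in the quotient, and this is precisely where the exceptions $C_3,C_5$ in the statement enter. Beyond this, the second real difficulty is designing discharging rules elastic enough to cover all the configurations left unreduced by the potential lemma; for $C_7$ this is substantially more delicate than for $C_5$, because the set of potentially-problematic short-odd-cycle neighborhoods is larger.
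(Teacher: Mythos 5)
First, a point of order: the paper you were given does not prove this statement at all. It is quoted in Subsection~\ref{subsec:context} from Postle and Smith-Roberge~\cite{PS22} purely as context for the signed-graph results, and the published proof in~\cite{PS22} runs to roughly 48 pages. So the only meaningful comparison is with the method of~\cite{PS22} and with the analogous potential-method machinery this paper builds in Section~\ref{sec:MainProof} for its own theorem. Your outline does correctly name that method: the Kostochka--Yancey potential function, here $\rho_G(R)=17|R|-15|E(G[R])|$, the reformulation of the theorem as $\rho_G(V(G))\le 2$, a minimum counterexample, a potential lemma proved by collapsing the fibers of a homomorphism $G[R]\to C_7$ and invoking minimality in the quotient, and a closing discharging argument from $\mu(v)=15d(v)-34$. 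Your arithmetic for these translations is correct.

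However, there is a concrete error that breaks the plan as written: the claim that $\delta(G)\ge 3$ is a ``first consequence'' of the potential lemma. This is false, and it cannot be repaired, for a simple reason: if every $C_7$-critical graph other than $C_3$ and $C_5$ had minimum degree $3$, then $|E(G)|\ge \frac{3|V(G)|}{2}\ge\frac{17|V(G)|-2}{15}$ would follow in one line, with no potential lemma and no discharging; equivalently, under $\delta\ge 3$ every vertex has initial charge $15d(v)-34\ge 11>0$, which already contradicts your negative-sum computation, so the rest of your own plan (rules ``moving charge from high-degree vertices to low-degree vertices'') would be vacuous. The bound of $\frac{17}{15}$ edges per vertex is far below $\frac{3}{2}$ precisely because degree-$2$ vertices genuinely occur in these critical graphs: a degree-$2$ vertex is not reducible, since a homomorphism of $G-v$ to $C_7$ extends to $v$ only when the images of its two neighbors coincide or lie at distance $2$ on $C_7$ (compare Observation~\ref{obs:PathExtension} in the signed setting), and your proposed potential justification fails numerically as well, since removing a degree-$2$ vertex changes the potential by only $30-17=13$, far more slack than any potential lemma threshold provides. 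The entire difficulty in~\cite{PS22}, as in~\cite{DP17} and in the signed analogue proved in this paper (see the analysis of $2_1$-, $3_1$- and $4_3$-vertices in Sections~\ref{sec:PropertiesOfCriticalGraphs} and~\ref{sec:MainProof}), is to control how degree-$2$ vertices cluster, not to exclude them. Beyond this error, everything of substance is deferred: no threshold $T$ is specified, no list of exceptional subsets is determined, no argument is given for why a critical subgraph of the quotient avoiding $C_3$ and $C_5$ survives (which is where the exceptions in the statement actually enter, since $\rho(C_3)=6$ and $\rho(C_5)=10$ both exceed $2$), and no discharging rules are designed or verified. What you have is a correct identification of the technique together with a plan whose one checkable structural claim is wrong; it is not a proof.
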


The general problem of finding the best possible lower bound on the edge density of $C_{2k+1}$-critical graphs has been studied extensively in the literature, and we refer to the two papers above and the references therein.

In the setting of signed graphs, the \emph{girth} of a signed graph $(G,\sigma)$ is defined as the length of a shortest cycle in $G$, and its \emph{negative-girth} as the length of its shortest negative cycle. Parallel to the graph case, the following natural questions have been addressed in the literature.

\begin{enumerate}
    \item What is the edge density of $C_{\ell}^*$-critical signed graphs?
    \item {What is the smallest integer $f(\ell)$ such that every signed planar graph of girth at least $f(\ell)$ admits a homomorphism to $C_{\ell}^*$?}
\end{enumerate}

Naserasr, Wang, and Zhu~\cite{NWZ21} have proved that every signed planar graph of girth at least $4$ admits a
homomorphism to $C^*_{2}$, and the girth bound is best possible due to a result of Kardo\v{s} and Narboni~\cite{KN21}. Moreover, by Proposition~\ref{Prop:HomoToCycleAndCircularColoring}, the circular chromatic number bound $4$ of such signed graphs is asymptotically tight, as there is a sequence of signed bipartite planar simple graphs whose circular chromatic number is approaching $4$~\cite{KNNW23}.
Regarding negative cycles as homomorphism targets, the signed cycle of length $\ell$ with an odd number of negative edges, written $C_{-\ell}$, has also been studied. For $\ell=4$, when restricted to bipartite graphs, the following results have been established. 
\begin{theorem}{\rm \cite{NPW22}}
\begin{itemize}
    \item Every $C_{-4}$-critical signed graph $(G,\sigma)$ except one signed graph on $7$ vertices and with $9$ edges satisfies that $|E(G)|\geq \frac{4|V(G)|}{3}$. 
    \item Every signed bipartite planar graph of negative-girth at least $8$ admits a homomorphism to $C_{-4}$. Moreover, the negative-girth bound is the best possible.
\end{itemize}
\end{theorem}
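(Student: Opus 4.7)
The plan is to prove the two items of the theorem in sequence. Item~(1), the edge-density bound, is the technical heart, and I would attack it via a Kostochka--Yancey-style potential-and-discharging argument adapted to the signed bipartite target $C_{-4}$. Item~(2) is a corollary of item~(1) obtained through Euler's formula and a face-length count, together with an explicit construction for sharpness. The principal obstacle lies in item~(1), and within it, in the reducibility lemma where a partial homomorphism must be extended across a low-potential subgraph while respecting signs of closed walks, not merely edge signs.

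\textbf{Item (1).} Take a counterexample $(G,\sigma)$ of minimum order, distinct from the exceptional $7$-vertex, $9$-edge signed graph. Since $C_{-4}$ is bipartite with minimum degree $2$, standard criticality arguments force $(G,\sigma)$ to be connected, bipartite, $2$-edge-connected, and to have minimum degree at least $2$. Introduce the potential
\[
\rho(H) \;=\; 4|V(H)|-3|E(H)|,
\]
so that the hypothesis on $(G,\sigma)$ gives $\rho(G)\geq 1$. The central structural lemma would assert a lower bound $\rho(H)\geq\rho^{*}$ for every non-trivial proper induced subgraph $(H,\sigma)$, together with a characterisation of the tight cases. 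I would prove it by contradiction: if $\rho(H)<\rho^{*}$, take a homomorphism $(G-V(H),\sigma)\to C_{-4}$ provided by criticality, perform a vertex switching near the boundary of $H$ (exploiting the natural antipodal involution of $C_{-4}$) to align boundary values, and argue that the resulting partial homomorphism extends over $H$, contradicting $(G,\sigma)\not\to C_{-4}$.

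The structural lemma, applied to suitably chosen small subgraphs, forbids a list of local configurations chiefly around degree-$2$ and degree-$3$ vertices in $(G,\sigma)$. The proof of item~(1) then finishes by discharging: assign each vertex $v$ the initial charge $\deg(v)-\tfrac{8}{3}$, so that the total is $2|E(G)|-\tfrac{8}{3}|V(G)|<0$; design rules that move charge from vertices of degree at least $3$ to their $2$-valent neighbours; and verify, using the forbidden configurations, that every vertex ends with non-negative charge, a contradiction. The exceptional graph has to be removed from the conclusion precisely because it is the unique small $C_{-4}$-critical signed graph that survives every reducibility step while having density below $\tfrac{4}{3}$.

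\textbf{Item (2).} Let $(G,\sigma)$ be bipartite planar of negative-girth $\geq 8$ admitting no homomorphism to $C_{-4}$, and extract a $C_{-4}$-critical subgraph $(H,\sigma)$. The exceptional graph has only $7$ vertices and therefore no cycle of length $\geq 8$; since any $C_{-4}$-critical signed graph must contain a negative cycle (otherwise it is switching-equivalent to an all-positive bipartite graph that maps into $C_{-4}$ via $K_{2}$), its negative-girth is at most $7$, so $(H,\sigma)$ is not exceptional and item~(1) gives $|E(H)|\geq\tfrac{4}{3}|V(H)|$. Fix a planar embedding of $(H,\sigma)$: bipartiteness forces every face to have even length $\geq 4$, and the negative-girth hypothesis forces every face whose boundary closed walk is negative to have length $\geq 8$. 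An Euler-formula face-length computation exploiting these bounds, together with the fact that $2$-edge-connectedness propagates negative cycles to negative faces in the embedding, yields an upper bound on $|E(H)|$ that contradicts the density bound from item~(1). Sharpness of the negative-girth hypothesis would then be exhibited by an infinite family of bipartite planar signed graphs of negative-girth~$6$ (the largest admissible even value strictly less than $8$) admitting no homomorphism to $C_{-4}$, typically constructed by iteratively gluing copies of $C_{-6}$.
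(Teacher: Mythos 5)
First, a remark on the comparison itself: this paper does not prove the statement at all — it is quoted verbatim from~\cite{NPW22} as background — so your proposal can only be measured against the proof in~\cite{NPW22} and against the analogous arguments this paper gives for its own main theorem (Lemma~\ref{lem: main lemma} and the discharging in Section~\ref{sec:MainProof}) and for Corollary~\ref{cor:Planar+ProjectivePlanar}. Your overall plan (potential method plus discharging for item~(1), then a planarity count for item~(2)) is the right family of ideas, but two steps as you describe them would fail. The first is the central structural lemma of item~(1): you propose that if a subgraph $H$ has low potential, one should take a homomorphism of $G-V(H)$ to $C_{-4}$ (available by criticality) and extend it over $H$. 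This is backwards. Low potential means $H$ is \emph{dense}, and dense subgraphs are precisely those over which a partial homomorphism cannot be extended (in the graph case, a $3$-coloring of $G-V(K_4)$ never extends over $K_4$). The correct direction, used in~\cite{KY14}, in~\cite{NPW22}, and in this paper's Lemma~\ref{lem: main lemma}, is the opposite: take a homomorphism of the dense proper subgraph $\hat{H}$ itself to the target, identify the vertices of $\hat{H}$ according to that homomorphism to obtain a smaller signed graph $\hat{G}_1$, note that $\hat{G}_1$ still admits no homomorphism, extract a critical subgraph $\hat{G}_2$, bound $\rho(\hat{G}_2)$ by minimality, and then un-identify to exhibit a subgraph of $\hat{G}$ of impossibly low potential. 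Your discharging skeleton (initial charge $\deg(v)-\tfrac{8}{3}$, total negative, rules pushing charge to $2$-vertices) is consistent with the standard scheme and is not where the difficulty lies.

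The second and more serious gap is in your derivation of item~(2). The Euler-formula count you describe cannot produce a contradiction: bipartiteness and negative-girth at least $8$ only force \emph{negative} faces to be long, while \emph{positive} faces may still have length $4$, and a planar bipartite signed graph whose faces are mostly positive quadrilaterals has up to $2|V|-4$ edges — far above $\tfrac{4|V|}{3}$ — regardless of how $2$-edge-connectivity or the cycle-space argument distributes the (at least one) negative face. So the critical subgraph $\hat{H}$ you extract need not violate the density bound under your count. The missing ingredient is the signed bipartite \emph{folding lemma} (the signed analogue of the Klostermeyer--Zhang folding lemma), which allows one to pass to a homomorphic image of $(G,\sigma)$ that is still planar, bipartite, of negative-girth $8$, and in which \emph{every} face is a negative $8$-cycle; only then does Euler's formula give $|E| = \tfrac{4(|V|-2)}{3} < \tfrac{4|V|}{3}$, contradicting item~(1). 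This is exactly how~\cite{NPW22} closes the argument, and it is an indispensable extra idea, not a routine face count. (Your observation that the exceptional $7$-vertex graph cannot arise because it has no cycle of length $8$ is correct, and your sharpness sketch via negative-girth-$6$ bipartite planar gadgets is at least pointed in the right direction, though the explicit example in~\cite{NPW22} requires a concrete construction and a non-mapping argument, not just gluing copies of $C_{-6}$.)
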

Thus our Theorem~\ref{thm:main result} and Corollary~\ref{cor:Planar+ProjectivePlanar} further contribute to this line of investigation.
In particular,
Corollary~\ref{cor:Planar+ProjectivePlanar} can be viewed as addressing the most basic case of a signed graph analogue of Conjecture~\ref{conj:Jaeger-restriction}.

\section{Preliminaries}\label{sect:prelim} 
In this paper, all graphs are finite and may have multiple edges or loops. If the signature of a signed graph $(G, \sigma)$ is understood from the context, or its particular knowledge is irrelevant, we use the simplified notation $\hat{G}$ to denote it. We denote the \emph{underlying graph} of a signed graph $\hat{G}=(G, \sigma)$ by $G$. For the figures, we use a blue solid line to represent a positive edge, a red dashed line to represent a negative edge, and a gray line to represent an unsigned edge. A \emph{digon} is two parallel edges with different signs. We say $(H, \pi)$ is a \emph{subgraph} of $(G, \sigma)$ if $H$ is a subgraph of $G$ and $\pi=\sigma|_{H}$.

We use $v(G)$ to denote the number of vertices of $G$ and $e(G)$ to denote the number of edges of $G$. We say a vertex is a \emph{distance-two neighbor} of another vertex if they are connected by a path of length $2$ whose internal vertex is of degree $2$.

A \emph{$k$-vertex} is a vertex having degree $k$ and a \emph{$k^+$-vertex} is a vertex of degree at least $k$. A \emph{$k_{\geq \ell}$-vertex} (or, \emph{$k_{\leq \ell}$-vertex}) is a $k$-vertex with at least (respectively, at most) $\ell$ neighbors of degree $2$ and a \emph{$k_{\ell}$-vertex} is a $k$-vertex with exactly $\ell$ neighbors of degree $2$. Other standard notions follow \cite{W96}.

\subsection*{Homomorphisms of signed graphs} 
Switching a subset $S$ of vertices of $(G,\sigma)$ amounts to toggling the sign of all the edges of the edge-cut $[S, V(G) \setminus S]$. Two signed graphs $(G,\sigma)$ and $(G, \sigma')$, or alternatively, the two signatures $\sigma$ and $\sigma'$ on $G$, are switching equivalent if we can obtain $(G, \sigma')$ from $(G, \sigma)$ by switching at an edge-cut. Note that switching at an edge-cut does not change the signs of any closed walk (or cycle). One of the earliest results \cite{Z82} proved in the theory of signed graphs characterizes equivalent signed graphs using the sign of their cycles (or closed walks).

\begin{lemma}\label{lem:SwitchingEquivalent}{\rm \cite{Z82}}
Two signed graphs $(G,\sigma)$ and $(G, \sigma')$ are switching equivalent if and only if each cycle has the same sign in both signed graphs.  
\end{lemma}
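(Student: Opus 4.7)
The proof has two directions. For the forward direction, I would show that a single vertex switch preserves the sign of every cycle, so any sequence of switches does too. Given a cycle $C$ and a vertex $v$, if $v \notin V(C)$ then no edge of $C$ changes sign, while if $v \in V(C)$ then exactly two edges of $C$ (those incident to $v$) are toggled, so the product of signs around $C$ is unchanged. More conceptually, switching a set $S$ toggles precisely the edges of the edge-cut $[S, V(G)\setminus S]$, and any cycle intersects any edge-cut in an even number of edges; the paper's statement that switching equivalence is achieved by ``switching at an edge-cut'' therefore immediately preserves cycle signs.

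For the converse, assume $\sigma$ and $\sigma'$ give every cycle the same sign, and I want to exhibit a switching taking $\sigma$ to $\sigma'$. Working component by component, I may assume $G$ is connected. Fix a spanning tree $T$ rooted at some vertex $r$. The plan is to process the non-root vertices of $T$ in order of non-decreasing distance from $r$, and switch each such $v$ (or not) so that the unique tree edge joining $v$ to its parent acquires its $\sigma'$-sign. Once $v$ is processed it will never be switched again, so the tree edges above $v$ keep their already-corrected signs. Let $\sigma''$ denote the resulting signature: by construction $\sigma''$ is switching-equivalent to $\sigma$, and $\sigma''(e) = \sigma'(e)$ for every $e \in E(T)$.

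It remains to verify that $\sigma'' = \sigma'$ on every non-tree edge $e = uv$. For such an edge, $T + e$ contains a unique cycle $C_e$. Because switching equivalence preserves cycle signs (forward direction), the sign of $C_e$ under $\sigma''$ equals its sign under $\sigma$, which equals its sign under $\sigma'$ by hypothesis. All edges of $C_e$ other than $e$ lie in $T$, and on those edges $\sigma''$ and $\sigma'$ already agree. Comparing the products of edge signs around $C_e$ under $\sigma''$ and $\sigma'$ therefore forces $\sigma''(e) = \sigma'(e)$. Hence $\sigma'' = \sigma'$ everywhere, proving switching equivalence.

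The only delicate point is the treatment of loops and multi-edges, which the paper explicitly permits: a loop at $v$ is not toggled by switching $v$, consistent with the convention that a negative loop contributes $-1$ to every closed walk traversing it. Since loops are themselves cycles, the hypothesis already guarantees $\sigma$ and $\sigma'$ agree on each loop, and parallel edges are handled as non-tree edges via their digon. I do not expect any further obstacle; the argument is a clean spanning-tree reduction using nothing beyond the forward direction.
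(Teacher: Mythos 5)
Your proof is correct. Note that the paper does not prove this lemma at all---it is quoted from Zaslavsky~\cite{Z82} as a known fact---so there is no internal proof to compare against; your argument (cut--cycle parity for the forward direction, and a rooted spanning tree to normalize the tree edges followed by the fundamental-cycle comparison for each non-tree edge) is the standard proof of Zaslavsky's result and is complete. Your handling of the two delicate points is also right: loops are cycles of length one whose signs are both switching-invariant and covered by the hypothesis, and every non-tree edge, parallel or not, is resolved by its own fundamental cycle in $T+e$.
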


Recall that a homomorphism of $(G,\sigma)$ to $(H, \pi)$ is a mapping $f: V(G) \to V(H)$ such that the adjacency and the signs of closed walks are preserved. A homomorphism of $(G, \sigma)$ to $(H, \pi)$ is said to be \emph{edge-sign preserving} if, furthermore, it preserves the signs of edges.

\begin{proposition}\label{prop:Homo}{\rm \cite{NRS15}}
A signed graph $(G, \sigma)$ admits a homomorphism to $(H, \pi)$ if and only if there exists a switching-equivalent signature $\sigma'$ such that $(G, \sigma')$ admits an edge-sign preserving homomorphism to $(H, \pi)$.
\end{proposition}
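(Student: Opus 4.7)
The plan is a direct proof via Lemma~\ref{lem:SwitchingEquivalent}, which already does the heavy lifting by characterizing switching equivalence through the signs of cycles. The proposition essentially says that a signed-graph homomorphism is nothing more than an edge-sign preserving homomorphism up to a switching on the domain, and Lemma~\ref{lem:SwitchingEquivalent} gives us exactly the cycle-sign bookkeeping we need.

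For the easy direction, I would assume that $\sigma'$ is switching-equivalent to $\sigma$ and that $f \colon (G,\sigma') \to (H,\pi)$ is edge-sign preserving. Because $f$ preserves the sign of every edge, it automatically preserves the sign of every closed walk of $(G,\sigma')$. By Lemma~\ref{lem:SwitchingEquivalent}, every closed walk of $G$ has the same sign under $\sigma$ and under $\sigma'$ (cycles determine closed-walk signs, since a closed walk decomposes as a symmetric difference of cycles plus repeated edges whose signs cancel in squares). Hence $f$ also preserves closed-walk signs from $(G,\sigma)$ to $(H,\pi)$, so it is a signed-graph homomorphism in the sense of Naserasr--Raspaud--Sopena.

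For the nontrivial direction, suppose $f \colon (G,\sigma) \to (H,\pi)$ is a signed-graph homomorphism. I would define a candidate signature $\sigma'$ on $G$ by pulling back the signature of $\pi$ along $f$, namely
\[
\sigma'(uv) \;=\; \pi\bigl(f(u)f(v)\bigr) \qquad \text{for every edge } uv \in E(G).
\]
By construction, $f$ is an edge-sign preserving homomorphism of $(G,\sigma')$ into $(H,\pi)$. It then suffices to show that $\sigma'$ is switching-equivalent to $\sigma$. For any cycle $C$ of $G$, the sign of $C$ under $\sigma'$ is the product of $\pi(f(u)f(v))$ along $C$, which is precisely the sign of the closed walk $f(C)$ in $(H,\pi)$; and by hypothesis, this equals the sign of $C$ under $\sigma$. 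Since every cycle has the same sign in $(G,\sigma)$ and $(G,\sigma')$, Lemma~\ref{lem:SwitchingEquivalent} delivers the required switching equivalence.

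The only technical point that needs care is the closed-walk-versus-cycle issue: the definition of signed-graph homomorphism talks about closed walks, while Lemma~\ref{lem:SwitchingEquivalent} is phrased in terms of cycles. This is not really an obstacle, since the sign of a closed walk depends only on how many of each edge it traverses modulo~$2$, and hence is determined by cycle signs; but I would state this reduction explicitly (or equivalently note that every cycle is in particular a closed walk, so the cycle-sign condition for $\sigma$ and $\sigma'$ follows immediately once $f$ is known to preserve all closed-walk signs). Beyond this bookkeeping, the argument is mechanical.
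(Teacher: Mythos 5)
The paper does not prove Proposition~\ref{prop:Homo}; it is quoted from~\cite{NRS15}, so there is no internal argument to compare against. Your proof is the standard one and is essentially what that reference does: pull the target signature back along $f$ to get $\sigma'$, observe that $f$ preserves cycle signs so $\sigma$ and $\sigma'$ agree on every cycle, and invoke Zaslavsky's characterization (Lemma~\ref{lem:SwitchingEquivalent}); the converse direction and the closed-walk-versus-cycle reduction are handled correctly. The one point to tighten: the paper allows multigraphs, and if $f(u)f(v)$ supports a digon in $(H,\pi)$ the pullback $\sigma'(uv)=\pi\bigl(f(u)f(v)\bigr)$ is ambiguous (and taking a single value would wrongly make a digon of $G$ positive under $\sigma'$). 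The remedy is to regard a homomorphism as carrying an edge map $e\mapsto f(e)$, as in~\cite{NRS15}, and to set $\sigma'(e)=\pi\bigl(f(e)\bigr)$ edge by edge; this is immaterial for the applications here, since $C_3^*$ and $C_3^*$-critical graphs have no digons.
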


We have noted before that based on the sign of the cycles and the parity of their lengths, there are four types of closed walks in signed graphs: positive odd closed walk (type $01$), negative odd closed walk (type $11$), positive even closed walk (type $00$) and negative even closed walk (type $10$). We denote by $g_{_{ij}}(G, \sigma)$ for $ij\in \mathbb{Z}^2_2$ the length of a shortest closed walk of type $ij$ in a signed graph $(G, \sigma)$. The next lemma provides a necessary condition for a signed graph to admit a homomorphism to another. 

\begin{lemma}{\rm \cite{NSZ21}}
If $(G, \sigma)\to (H, \pi)$, then $g_{_{ij}}(G, \sigma)\geq g_{_{ij}}(H, \pi)$ for $ij\in \mathbb{Z}^2_2$.
\end{lemma}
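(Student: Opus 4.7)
The plan is to push a shortest closed walk of each type forward through the homomorphism and observe that both its length and its type are preserved. Concretely, let $f:V(G)\to V(H)$ be a homomorphism of $(G,\sigma)$ to $(H,\pi)$, fix a type $ij\in\mathbb{Z}_2^2$, and suppose $(G,\sigma)$ contains a closed walk $W$ of type $ij$ with length $g_{ij}(G,\sigma)$; if no such walk exists then $g_{ij}(G,\sigma)=\infty$ and the inequality is trivial.

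First I would verify that the image $f(W)$ is a closed walk in $H$ of the same length as $W$. This is immediate from the definition of homomorphism, since edges (including loops) map to edges and the initial and terminal vertices coincide after applying $f$. In particular, the parity bit $i$ of the length is preserved.

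Next I would check that the sign bit $j$ of the walk is preserved. By definition, a homomorphism of signed graphs preserves the sign of every closed walk, so $f(W)$ has the same sign as $W$. Combined with the preservation of parity, this means $f(W)$ is a closed walk of type $ij$ in $(H,\pi)$. Hence
\[
g_{ij}(H,\pi)\ \leq\ |f(W)|\ =\ |W|\ =\ g_{ij}(G,\sigma),
\]
which is the desired inequality.

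There is no real obstacle here, only two small points to handle carefully. The first is the convention that $f$ may identify endpoints of a non-loop edge of $G$, in which case the image edge is a loop of $H$; this is exactly why the definition of signed graph homomorphism is formulated in terms of closed walks rather than cycles, and it is precisely what guarantees that $f(W)$ is still a legitimate closed walk whose sign is the product of the (possibly loop) edge signs. The second is to note that the statement should be read with the natural convention $g_{ij}=\infty$ when the relevant walk type is absent in $(G,\sigma)$, so that the inequality is vacuous in that case; one may alternatively remark that if $(G,\sigma)\to(H,\pi)$ then the type of a closed walk in $G$ is realized in $H$, which already contains the whole content of the lemma.
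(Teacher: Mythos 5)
Your proof is correct. The paper itself gives no proof of this lemma --- it is quoted from the reference [NSZ21] --- but your argument is exactly the standard one: the image of a shortest closed walk of type $ij$ under the homomorphism is a closed walk in $(H,\pi)$ of the same length, the same parity, and (by the defining property of signed-graph homomorphisms) the same sign, which immediately gives $g_{_{ij}}(H,\pi)\leq g_{_{ij}}(G,\sigma)$. Your two side remarks --- that identifying the endpoints of an edge produces a loop, which is why the definition is phrased in terms of closed walks rather than cycles, and that the convention $g_{_{ij}}=\infty$ handles the case where no walk of the given type exists --- are precisely the points that need care, and you handle both.
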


It is easy to observe that if a signed graph $(G, \sigma)$ is $(H, \pi)$-critical and there exists $ij\in \mathbb{Z}^2_2$ such that $g_{_{ij}}(G, \sigma)\leq g_{_{ij}}(H, \pi)$, then $(G, \sigma)$ is just a signed cycle of type $ij$. To eliminate the trivial case, we use the notion of $(H, \pi)$-critical signed graph defined as follows:

\begin{definition}\label{def:H-critical}{\rm \cite{NPW22}}
A signed graph $\hat{G}$ is \emph{$\hat{H}$-critical} if for $ij\in \mathbb{Z}^2_2$, $g_{_{ij}}(\hat{G})\geq g_{_{ij}}(\hat{H})$, $\hat{G}$ admits no homomorphism to $\hat{H}$ but any proper subgraph of $\hat{G}$ does.
\end{definition}

In particular, this means any $C^*_3$-critical signed graph has no digon and no positive loop.

\subsection*{Circular colorings of signed graphs} 

The notion of the circular coloring of signed graphs is a refinement of both the notions of $0$-free colorings of signed graphs and circular colorings of simple graphs. Now we are going to show how homomorphism captures the notion of circular $\frac{p}{q}$-coloring of signed graphs for any rational number $\frac{p}{q}$. To do so, we need a special family of signed graphs. 

\begin{definition}{\rm \cite{NWZ21}}
Given two positive integers $p$ and $q$ with $p$ being even, the \emph{circular $\frac{p}{q}$-clique}, denoted $K^s_{p;q}$, is a signed graph having the set of vertices $\{0, 1, \cdots, p-1\}$, and edges and signature as follows: (1) $ij$ is a positive edge if $q \leq |i-j| \leq p-q$, (2) $ij$ is a negative edge if either $|i-j| \leq \frac{p}{2}-q$ or $|i-j| \geq \frac{p}{2}+q$. 
\end{definition}

The signed graph $K^s_{p;q}$ contains a negative loop at each vertex, and, moreover, it contains a digon if and only if $\frac{p}{q}\geq 4$. 

Note that in $K^s_{p;q}$, each vertex $i$ and its antipodal vertex $\bar{i}=i+\frac{p}{2}$ (taken modulo $p$) has exactly an opposite neighborhood, that is to say, a vertex is adjacent to $i$ by a positive edge while it is adjacent to $\bar{i}$ by a negative edge. We may switch at $\{\frac{p}{2},\ldots,p-1\}$
and identify each of them with their antipodes, and the resulting signed graph is denoted by $\hat{K}^s_{p;q}$. Such $\hat{K}^s_{p;q}$ has exactly $\frac{p}{2}$ vertices. 

Given any positive rational number $\frac{p}{q}$, the circular $\frac{p}{q}$-clique $K^s_{p;q}$ and its switching core $\hat{K}^s_{p;q}$ are put into our context in the following proposition. 

\begin{proposition}{\rm \cite{NWZ21}}
Given a signed graph $(G, \sigma)$, the following statements are equivalent:
\begin{itemize}
    \item $(G, \sigma)$ admits a circular $\frac{p}{q}$-coloring;
    \item $(G, \sigma)$ admits an edge-sign preserving homomorphism to $K^s_{p;q}$;
    \item $(G, \sigma)$ admits a homomorphism to $\hat{K}^s_{p;q}$.
\end{itemize}
\end{proposition}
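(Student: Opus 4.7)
The plan is to establish the three-way equivalence by proving (1)$\Leftrightarrow$(2) directly from the definition of $K^s_{p;q}$, (2)$\Rightarrow$(3) via a canonical homomorphism $K^s_{p;q}\to\hat K^s_{p;q}$, and (3)$\Rightarrow$(2) by lifting through the antipodal identification using Proposition~\ref{prop:Homo}.

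For (1)$\Leftrightarrow$(2), the clique $K^s_{p;q}$ is built so that $ij$ is a positive edge exactly when $q\le|i-j|\le p-q$ and a negative edge exactly when $|i-j|\le\tfrac{p}{2}-q$ or $|i-j|\ge\tfrac{p}{2}+q$, i.e., precisely the two conditions defining a circular $\tfrac{p}{q}$-coloring. Hence a map $\varphi:V(G)\to\{0,\dots,p-1\}$ is a circular $\tfrac{p}{q}$-coloring of $(G,\sigma)$ if and only if it is an edge-sign preserving homomorphism to $K^s_{p;q}$; no further argument is needed.

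For (2)$\Rightarrow$(3), I would first produce a single hom $K^s_{p;q}\to\hat K^s_{p;q}$ and then compose with the hom from (2). By construction $\hat K^s_{p;q}$ is obtained from $K^s_{p;q}$ by switching at $S=\{\tfrac{p}{2},\dots,p-1\}$ and then identifying each $i$ with its antipode $i+\tfrac{p}{2}$. This identification becomes an edge-sign preserving hom once each antipodal pair has identical signed neighborhoods in the switched graph, which follows from two routine facts: edge signs in $K^s_{p;q}$ depend only on cyclic distance, and $d(i+\tfrac{p}{2},k)=\tfrac{p}{2}-d(i,k)$, which toggles ``positive'' and ``negative'' exactly in the way that compensates for the switching at $S$.

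For (3)$\Rightarrow$(2), Proposition~\ref{prop:Homo} supplies a signature $\sigma'\sim\sigma$ and an edge-sign preserving hom $\psi:(G,\sigma')\to\hat K^s_{p;q}$. For each $v$ I would pick an arbitrary representative $\varphi(v)\in\{\psi(v),\psi(v)+\tfrac{p}{2}\}$; the same neighborhood-equality observation used above shows that any such $\varphi$ is an edge-sign preserving hom from $(G,\sigma')$ to the \emph{switched} $K^s_{p;q}$. Unswitching the target at $S$ is dual to switching the source at $\varphi^{-1}(S)$, producing $\sigma''\sim\sigma$ with an edge-sign preserving hom $(G,\sigma'')\to K^s_{p;q}$. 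Finally, since the map $i\mapsto i+\tfrac{p}{2}$ is an automorphism of $K^s_{p;q}$, the existence of an edge-sign preserving hom to $K^s_{p;q}$ is invariant under switching of the source, so $(G,\sigma)$ itself satisfies (2).

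The main obstacle is the bookkeeping in (3)$\Rightarrow$(2): one must keep track of how a switching on the target of an edge-sign preserving hom transfers to the source, and recognize that the antipode automorphism makes edge-sign preserving hom existence to $K^s_{p;q}$ invariant under switching the source. Once the cyclic-distance/sign computation in the target is absorbed, the remaining implications are essentially definitional.
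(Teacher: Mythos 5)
The paper does not prove this proposition at all: it is imported verbatim from~\cite{NWZ21} as a known result, so there is no in-paper argument to compare against. Your proof is a correct, self-contained reconstruction. The equivalence (1)$\Leftrightarrow$(2) is indeed purely definitional, and your (2)$\Rightarrow$(3) via the quotient map is sound: after switching at $S=\{\tfrac{p}{2},\dots,p-1\}$, exactly one of the two edges $ik$ and $(i+\tfrac{p}{2})k$ lies in the cut, so the ``opposite neighborhoods'' of antipodal pairs become identical and the identification is edge-sign preserving. The only place where your wording is looser than your argument is the final step of (3)$\Rightarrow$(2): the fact that $i\mapsto i+\tfrac{p}{2}$ is a (global) automorphism of $K^s_{p;q}$ does not by itself give invariance of edge-sign-preserving homomorphism existence under switching the source; what does the work is the \emph{vertex-wise} version of the antipodal property, namely that replacing $\varphi(v)$ by $\varphi(v)+\tfrac{p}{2}$ for exactly the switched vertices flips the sign of precisely the edges in the corresponding cut of the target, matching the sign changes in the source. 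Since you already established the distance identity $d(i+\tfrac{p}{2},k)=\tfrac{p}{2}-d(i,k)$ that proves this, the gap is one of phrasing rather than substance; I would just state that compensation explicitly rather than appeal to the word ``automorphism.''
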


We note that $\hat{K}^s_{2\ell;\ell-1}$ is switching isomorphic to $C^*_\ell$.

\subsection{Properties of $C_3^*$-critical signed graphs}\label{sec:PropertiesOfCriticalGraphs}

We say a triangle is a graph that is a cycle $C_3$ of length three. We denote by $C_3^*$ the signed graph in Figure~\ref{fig:C3 star} which is a positive triangle with a negative loop at each vertex. 

Recall that a signed graph $\hat{G}$ is \emph{$C_3^*$-critical} if the following three conditions are satisfied: there is no digon or positive loop in $\hat{G}$, $\hat{G} \not\rightarrow C_3^*$, and $\hat{G}' \rightarrow C_3^*$ for any proper subgraph $\hat{G}'\subsetneq \hat{G}$. First we give an example, depicted in Figure~\ref{fig:W}, which is $C_3^*$-critical and  satisfies the condition $e(G)=\frac{3v(G)-1}{2}$.

\begin{figure}[ht]
    \centering 
\begin{minipage}[t]{.4\textwidth}
    \centering
    \begin{tikzpicture}[scale=.28]
   \foreach \i/\j in {1,2,3}
  {
    \draw[rotate=120*(\i)] (0,4) node[circle, draw=black!80, inner sep=0mm, minimum size=3.3mm] (x_{\i}){\footnotesize $x_{\scriptsize \i}$};
  }
	
\foreach \i/\j in {1/2,2/3,3/1}
  {
    \draw[line width=0.4mm, blue] (x_{\i}) -- (x_{\j});
  }

  \foreach \i/\j in {1,2,3}
  {
    \draw[rotate=120*(\i)] [densely dotted, line width=0.4mm, red] (x_{\i}) .. controls (3,7) and (-3,7) .. (x_{\i});
  }	
  \end{tikzpicture}
        \caption{$C^*_3\equiv \hat{K}^{s}_{6;2}$}
            \label{fig:C3 star}
   \end{minipage}
\begin{minipage}[t]{.45\textwidth}
\centering
       \begin{tikzpicture}[scale=.28]
       \centering
	\draw (0,0) node[circle, draw=black!80, inner sep=0mm, minimum size=3.3mm] (x_{0}){\footnotesize $v_{\scriptsize  0}$};
	\draw (0,-2.5) node[circle, draw=black!80, inner sep=0mm, minimum size=3.3mm] (x_{4}){\footnotesize $v_{\scriptsize 4}$};
\foreach \i/\j in {1,2,3}
  {
    \draw[rotate=120*(\i)] (0,5) node[circle, draw=black!80, inner sep=0mm, minimum size=3.3mm] (x_{\i}){\footnotesize $v_{\scriptsize \i}$};
  }
	
\foreach \i/\j in {2/3,3/1}
  {
    \draw[line width=0.4mm, blue] (x_{\i}) -- (x_{\j});
  }
 \foreach \i in {1,2,3}
  {
    \draw[line width=0.4mm, blue] (x_{\i}) -- (x_{0});
  }
  \draw[densely dotted, line width=0.4mm, red] (x_{1}) -- (x_{4});
  \draw[line width=0.4mm, blue] (x_{2}) -- (x_{4});
  \draw[rotate=120*(2)] [densely dotted, line width=0.4mm, white] (x_{2}) .. controls (3,7) and (-3,7) .. (x_{2});
  \end{tikzpicture}
        \caption{$C^*_3$-critical signed graph $\hat{W}$}
            \label{fig:W}
  \end{minipage}
\end{figure}

\begin{lemma}\label{lem:hatW}
The signed graph $\hat{W}$ is $C_3^*$-critical. 
\end{lemma}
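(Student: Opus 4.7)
The plan is to verify the three conditions of Definition \ref{def:H-critical} for $\hat{H}=C_3^*$ in turn: the girth inequalities, the non-existence of a homomorphism $\hat{W}\to C_3^*$, and the existence of $\hat{W}-e\to C_3^*$ for every edge $e$. That $\hat{W}$ has no positive loop and no digon is immediate from the figure.

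The four girth invariants I would read directly off the picture. Repeating any edge gives $g_{00}(\hat{W})=2=g_{00}(C_3^*)$; the positive triangle $v_0v_1v_3$ gives $g_{01}(\hat{W})=3=g_{01}(C_3^*)$; traversing the negative edge $v_1v_4$ three times gives $g_{11}(\hat{W})\le 3$, more than enough against $g_{11}(C_3^*)=1$; and the $4$-cycle $v_0v_1v_4v_2v_0$, whose only negative edge is $v_1v_4$, is a shortest negative even closed walk, so $g_{10}(\hat{W})=4=g_{10}(C_3^*)$.

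For non-existence I would combine Proposition \ref{prop:Homo} with Lemma \ref{lem:SwitchingEquivalent} to restate $\hat{W}\to C_3^*$ as the existence of a $3$-colouring $f:V(\hat{W})\to\{A,B,C\}$ such that, on each cycle of $\hat{W}$, the number of $f$-monochromatic edges has the same parity as the number of negative edges. Since a triangle cannot have exactly two monochromatic edges, the two positive triangles $v_0v_1v_3$ and $v_0v_2v_3$ are each forced to be rainbow-coloured, whence $f(v_1)=f(v_2)$ (the unique colour absent from $\{f(v_0),f(v_3)\}$). Now examine the negative $4$-cycle $v_0v_1v_4v_2v_0$: the two edges incident to $v_0$ are bichromatic, and the two edges incident to $v_4$ are simultaneously monochromatic or simultaneously bichromatic depending on whether $f(v_4)=f(v_1)$, so the cycle contains an even number of monochromatic edges, contradicting the single negative edge on it.

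For the last step I would cut the workload using symmetries of $\hat{W}$: the transposition $v_0\leftrightarrow v_3$ is a genuine automorphism, and the transposition $v_1\leftrightarrow v_2$ is an isomorphism between $\hat{W}$ and its switching at $v_4$. Under these, the seven edges partition into three orbits $\{v_0v_3\}$, $\{v_1v_4,v_2v_4\}$, and $\{v_0v_1,v_0v_2,v_1v_3,v_2v_3\}$, so it suffices to verify $\hat{W}-e\to C_3^*$ for one representative of each. Writing $(f(v_0),f(v_1),f(v_2),f(v_3),f(v_4))$, my proposed witnesses are $(A,B,C,A,B)$ for $\hat{W}-v_0v_3$, $(A,C,C,B,C)$ for $\hat{W}-v_2v_4$, and $(A,A,C,B,A)$ for $\hat{W}-v_0v_1$. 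In the second case $v_4$ becomes a leaf and only the three positive cycles of $K_4^-$ on $\{v_0,v_1,v_2,v_3\}$ require checking; in the other two cases the cycle space has dimension~$2$ and the parity condition on two generating cycles is a direct inspection. The main obstacle is the parity bookkeeping in the non-existence argument; once the constraint $f(v_1)=f(v_2)$ has been extracted from the two positive triangles, the rest of the proof reduces to a handful of short explicit checks.
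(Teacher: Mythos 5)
Your proposal is correct and follows essentially the same route as the paper: the heart of both arguments is that the two positive triangles in $\hat{W}-v_4$ force $f(v_1)=f(v_2)$, after which the two edges at $v_4$ yield a contradiction (you phrase this as a parity violation on the negative $4$-cycle $v_0v_1v_4v_2$, the paper as the image forming a digon). You simply supply more detail than the paper does on the girth conditions and on explicit homomorphisms for the edge-deleted subgraphs, and all of your witness colorings check out.
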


\begin{proof}
Suppose for contradiction that $\hat{W}\to C_3^*$. By Proposition~\ref{prop:Homo}, there is a switching-equivalent signature $\sigma'$ of $\hat{W}$ such that $(W, \sigma')$ admits an edge-sign preserving homomorphism to $C^*_3$. We first observe that under $\sigma'$ any negative $4$-cycle contains only one negative edge and any positive cycle contains no negative edges. Subject to these two conditions, $\sigma'$ is unique as drawn in Figure~\ref{fig:W}. Hence, by examining the subgraph $\hat{W}- v_4$, in any edge-sign preserving homomorphism $\varphi$ of $(W, \sigma')$ to $C^*_3$, $\varphi(v_1)=\varphi(v_2)$. But then together with $\varphi(v_4)$, it would form a digon, which does not exist in $C^*_3$, a contradiction. Therefore, $\hat{W}\not\to C_3^*$. Finally, it is easy to see that any proper subgraph of $\hat{W}$ admits a homomorphism to $C^*_3$. Thus $\hat{W}$ is $C^*_3$-critical.
\end{proof}

In the arguments that follow we will employ a general technique to ``color'' a signed graph $\hat{G}$ by extending a ``pre-coloring'' of its subgraph $\hat{H}$. First, we assume that there exists an edge-sign preserving homomorphism of $\hat{H}$ to $C_3^*$ under the signature of $\hat{G}$. Once we fix the homomorphism of $\hat{H}$ to $C_3^*$, we never again switch at the vertices of $\hat{H}$. To extend this homomorphism, we may switch at the vertices in $V(G) \setminus V(H)$. In this setting, it makes sense to speak of the sign of a path if both ends of a path are fixed in $\hat{H}$. The \emph{sign} of a path is then the product of the signs of all of its edges. Motivated by this, in the sequel, we use figures with round or square vertices to denote properties of the coloring and structure: we use a round vertex to denote a vertex that is not pre-colored, at which we allow switching, and whose degree is shown in the figure; We use a square vertex to denote a vertex which is pre-colored, at which we do not allow switching, which may have neighbors not drawn in the figure, and, moreover, which may not be distinct from other square vertices.

\begin{observation}\label{obs:PathExtension}
Let $\hat{P}$ be a signed path with the endpoints $x$ and $y$, which contains at most one negative edge. Let $S_x, S_y\subseteq V(C^*_3)$. Let 
$\varphi: \{x, y\} \to V(C^*_3)$ be such that $\varphi(x)\in S_x$ and $\varphi(y)\in S_y$. The mapping $\varphi$ can be extended to an edge-sign preserving homomorphism of $\hat{P}$ to $C^*_3$ unless one of the following conditions is satisfied: 
\begin{enumerate}
\item $\hat{P}$ is either a positive edge or a negative path of length $2$, $S_x=S_y$ and $|S_x|=1$; 
\item $\hat{P}$ is a negative edge and $S_x\cap S_y=\emptyset$.
\end{enumerate}
\end{observation}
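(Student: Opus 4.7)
The plan is to exploit the rigid structure of $C_3^*$: its positive edges form the triangle $K_3$ on the three vertices, and its only negative edges are the loops, one at each vertex. Consequently, any edge-sign preserving homomorphism $\varphi$ must map every negative edge of $\hat{P}$ to a loop (forcing the two endpoints to share a color), and every positive edge of $\hat{P}$ to an edge of $K_3$ (forcing the two endpoints to receive distinct colors). Hence extending a prescribed assignment at $x$ and $y$ to all of $\hat{P}$ is equivalent to exhibiting a walk in $K_3$ from $\varphi(x)$ to $\varphi(y)$ whose length $k$ equals the number of positive edges of $\hat{P}$; so $k$ is the length of $\hat{P}$ if $\hat{P}$ is all positive, and one less if $\hat{P}$ contains a single negative edge.

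The only combinatorial input I will need is the elementary fact that in $K_3$ a walk of any length $k \geq 2$ exists between every pair of vertices (including from a vertex to itself, via either of its two neighbours); this is immediate by a one-line induction on $k$. For $k=1$, a walk from $a$ to $b$ exists if and only if $a \neq b$, and for $k=0$ if and only if $a=b$.

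With these reductions in hand, I would finish by a short case analysis on the length and signature of $\hat{P}$. If $\hat{P}$ is a single positive edge or a negative path of length two, then $k=1$, so I need to choose $\varphi(x) \in S_x$ and $\varphi(y) \in S_y$ with $\varphi(x) \neq \varphi(y)$; this is possible unless $S_x$ and $S_y$ are equal singletons, which is precisely condition (1). If $\hat{P}$ is a single negative edge, then $k=0$, so I need $S_x \cap S_y \neq \emptyset$, giving condition (2). In every remaining case (a positive path of length at least two, or a path of length at least three with a single negative edge) we have $k \geq 2$, and the walk-existence fact guarantees an extension for any choice of $\varphi(x) \in S_x$ and $\varphi(y) \in S_y$.

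I do not foresee any serious obstacle: once the problem is phrased as walk existence in $K_3$, the verification is routine. The only mild subtlety is recognising that the ``positive edge'' and ``negative path of length two'' cases both correspond to the length-one walk in $K_3$, and so are naturally grouped into a single failure condition.
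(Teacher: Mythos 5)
Your proof is correct and follows essentially the same route as the paper, which justifies the observation in a single sentence by noting that outside the listed exceptions $\hat{P}$ has at least two positive edges, affording enough flexibility in the mapping. Your reduction to walk-existence in $K_3$ (negative edges of $\hat{P}$ must map to loops and hence force equal images, positive edges force distinct images) is a precise spelling-out of that flexibility argument, and your case analysis on the number $k$ of positive edges correctly recovers both exceptional conditions.
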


To justify this observation, note that when the above conditions are not satisfied $\hat{P}$ has at least two positive edges, which affords enough flexibility in the mapping.

A \emph{theta graph} is a simple graph that is the union of three internally disjoint paths that have the same two end vertices.

\begin{lemma}\label{lem:theta}
Every signed theta graph $\hat{\Theta}$ admits a homomorphism to $C^*_3$ and is therefore not $C^*_3$-critical.
\end{lemma}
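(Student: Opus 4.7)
The plan is to apply Observation~\ref{obs:PathExtension} directly on each of the three internally disjoint paths $P_1, P_2, P_3$ from $x$ to $y$ in $\hat\Theta$, after normalizing the signature within the switching class. A theta graph has exactly three cycles $C_{ij} = P_i \cup P_j$ whose signs $c_{ij}$ satisfy $c_{12}c_{13}c_{23} = +$, so the number of negative cycles is $0$ or $2$. By Lemma~\ref{lem:SwitchingEquivalent}, I may therefore replace $\hat\Theta$ by a switching-equivalent signed graph and (if necessary) relabel the paths so that $P_1$ and $P_2$ are all-positive while $P_3$ contains at most one negative edge. This leaves two cases: $P_3$ is also all-positive (Case I), or $P_3$ contains exactly one negative edge (Case II).

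In Case I, I set $\varphi(x) = 0$ and $\varphi(y) = 1$. For each $P_i$, I apply Observation~\ref{obs:PathExtension} with $S_x = \{0\}$ and $S_y = \{1\}$: forbidden condition~(1) requires $S_x = S_y$, and forbidden condition~(2) requires a negative edge, so neither case arises and $\varphi$ extends along each $P_i$. In Case II with $\ell_3 := |E(P_3)| \geq 2$, I use the same $\varphi(x) = 0$, $\varphi(y) = 1$; the argument for $P_1, P_2$ is unchanged, and on $P_3$ condition~(1) again fails since $S_x \ne S_y$, while condition~(2) requires $\ell_3 = 1$, so $\varphi$ extends to $P_3$ as well.

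The remaining subcase is Case II with $\ell_3 = 1$, where $P_3$ is a single negative edge between $x$ and $y$; since $\hat\Theta$ is simple, no other path can be a single edge, so $\ell_1, \ell_2 \geq 2$. I set $\varphi(x) = \varphi(y) = 0$, which sends $P_3$ to the negative loop at $0$ in $C^*_3$, and for each of $P_1, P_2$ (all-positive of length at least $2$) Observation~\ref{obs:PathExtension} condition~(1) does not apply because $P_i$ is neither a positive edge nor a negative path of length $2$, so $\varphi$ extends. Combining all cases, I obtain an edge-sign preserving homomorphism from a signed graph switching-equivalent to $\hat\Theta$ into $C^*_3$; Proposition~\ref{prop:Homo} then gives $\hat\Theta \to C^*_3$, so $\hat\Theta$ cannot be $C^*_3$-critical. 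I do not anticipate a hard step; the only minor subtlety is the $\ell_3 = 1$ subcase, where $x$ and $y$ are forced to receive the same color and we rely on simplicity of $\hat\Theta$ to ensure the remaining paths are long enough.
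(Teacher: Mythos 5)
Your proof is correct, but it takes a genuinely different route from the paper's. You first normalize the signature: using the identity $c_{12}c_{13}c_{23}=+$ (each edge lies on exactly two of the three cycles) together with Lemma~\ref{lem:SwitchingEquivalent}, you switch to a representative in which two branches are all-positive and the third carries at most one negative edge, then you assign images to the two branch vertices and extend along each branch independently via Observation~\ref{obs:PathExtension}; the case split on whether the exceptional branch is a single negative edge (forcing $\varphi(x)=\varphi(y)$, with simplicity guaranteeing the other branches have length at least $2$) is handled correctly. The paper instead uses a folding argument: it picks the two branches whose numbers of positive edges have the same parity, contracts the negative edges on them, folds the longer branch onto the shorter, and thereby reduces to a signed cycle, which maps to $C^*_3$. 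Your construction is more explicit and self-contained, but it leans on $C^*_3$ having three vertices (so that all-positive paths extend regardless of the endpoint colors); the paper's folding argument is essentially target-independent, which is what justifies its subsequent remark that no signed theta graph is $C^*_\ell$-critical for general $\ell$.
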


\begin{proof}
Among the three paths of a signed theta graph $\hat{\Theta}$, two of them, say $\hat{P}_1,\hat{P}_2$ with $v(\hat{P}_1) \geq v(\hat{P}_2)$, have the same parity of the number of positive edges. For each negative edge $e = uv$ of those paths, identify $u$ and $v$. Now it is easy to see that $\hat{P}_1 \rightarrow \hat{P}_2$. Therefore, $\hat{\Theta} \to C_3^*$ if and only if $\hat{\Theta} - E(\hat{P}_1) \to C_3^*$. But $\hat{\Theta}-E(\hat{P}_1)$ is a signed cycle (which is not a digon), and therefore admits a homomorphism to $C^*_3$.
\end{proof}

In fact, Lemma~\ref{lem:theta} holds more generally: no signed theta graph $\hat{\Theta}$ is $C^*_\ell$-critical, and every signed theta graph which does not violate the girth conditions admits a homomorphism to $C^*_\ell$.

\begin{lemma}\label{lem: forbidden edge}
No $C_3^*$-critical signed graph contains an edge of the following type: loop-edge, parallel-edge, or cut-edge.  
\end{lemma}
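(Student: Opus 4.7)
The plan is to handle the three types of forbidden edges separately. In each case I would delete an offending edge to produce a proper subgraph of $\hat{G}$, invoke $C^*_3$-criticality to obtain a homomorphism of this subgraph to $C^*_3$, and then show that this homomorphism extends to $\hat{G}$, contradicting $\hat{G}\not\to C^*_3$.

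For loop-edges, positive loops are already excluded by Definition~\ref{def:H-critical}. If a vertex $v$ carries a negative loop, I would delete the loop, apply criticality to obtain a homomorphism $\varphi\colon \hat{G}-\text{loop}\to C^*_3$, and observe that $\varphi$ already works on $\hat{G}$ since every vertex of $C^*_3$ carries a negative loop: the deleted loop is sent to the loop at $\varphi(v)$, preserving the signs of all closed walks that use it.

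For parallel edges, digons are ruled out by definition, so the two parallel edges $e_1,e_2$ between some $u,v$ must carry the same sign. Removing one and invoking criticality on $\hat{G}'=\hat{G}-e_2$ yields $\varphi\colon \hat{G}'\to C^*_3$. Because $C^*_3$ has at most one edge between any pair of its vertices, $e_1$ and $e_2$ are necessarily assigned the same image; and because they are parallel with equal sign, any closed walk in $\hat{G}$ using $e_2$ can be rerouted through $e_1$ to give a closed walk in $\hat{G}'$ with the same sign and the same image under $\varphi$, so $\varphi$ is already a homomorphism of $\hat{G}$.

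The main case is the cut-edge. Suppose $e=uv$ is a cut-edge and let $\hat{G}_1\ni u$, $\hat{G}_2\ni v$ be the two components of $\hat{G}-e$. Provided both components are nontrivial, the subgraphs $\hat{G}[V(\hat{G}_1)\cup\{v\}]$ and $\hat{G}[V(\hat{G}_2)\cup\{u\}]$ are proper subgraphs, so criticality yields homomorphisms $\psi_1,\psi_2$ from each piece to $C^*_3$. These may not agree at the common endpoints of $e$, so I would use the vertex-transitive automorphism group $S_3$ of $C^*_3$ to post-compose $\psi_2$ with a permutation $\pi$ aligning $(\pi\circ\psi_2)(u)$ with $\psi_1(u)$; patching the two adjusted maps across $e$ then produces a candidate homomorphism of all of $\hat{G}$. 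The degenerate case where some component is a single vertex is easier, since one can remove that vertex and directly choose its image by hand using vertex-transitivity and the presence of a negative loop at every vertex of $C^*_3$.

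The main obstacle is verifying that the patched map in the cut-edge case actually preserves the signs of closed walks. The key observation is that, because $e$ is a cut-edge, every closed walk in $\hat{G}$ traverses $e$ an even number of times, so the contribution of $e$ to the sign is $+$, and the remaining edges decompose cleanly into sub-walks lying in $\hat{G}_1$ and in $\hat{G}_2$. The signs of the $\hat{G}_1$-sub-walks are preserved by $\psi_1$, and those of the $\hat{G}_2$-sub-walks are preserved by $\pi\circ\psi_2$ (since post-composing with an automorphism of $C^*_3$ preserves signs of closed walks), so the patched map preserves the sign of every closed walk.
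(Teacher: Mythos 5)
Your proposal is correct and follows essentially the same route as the paper: for each of the three edge types, delete the offending edge, invoke criticality to obtain a homomorphism of the resulting proper subgraph to $C_3^*$, and extend or patch it back to $\hat{G}$. The only cosmetic difference is in the cut-edge case, where the paper uses vertex-transitivity of $C_3^*$ to align the images of the two endpoints so that the sign of $e$ is preserved edge-wise (then applies Observation~\ref{obs:PathExtension}), while you instead observe that every closed walk traverses a cut-edge an even number of times, so its sign is immaterial; both justifications are valid.
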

\begin{proof}
Let $\hat{G}$ be a $C_3^*$-critical signed graph, and let $e \in E(\hat{G})$. By definition of criticality, $\hat{G} \not\to C_3^*$ but $\hat{G}-e \to C_3^*$. We will show, in order, that $e$ cannot be a type of edge listed in the lemma.  

First, suppose $e$ is a loop. The signed graph $C_3^*$ has $g_{_{01}}(C_3^*) = 3$, and so $e$ must be a negative loop. Since $C_3^*$ has negative loops at each vertex, 
$\hat{G}-e \to C_3^*$ if and only if $\hat{G} \to C_3^*$, a contradiction.

Next, suppose edge $f$ is parallel to $e$. The signed graph $C_3^*$ has $g_{_{10}}(C_3^*) = 4$, which means that $e$ and $f$ must have the same sign. But since edges $e$ and $f$ have the same endpoints and sign, $\hat{G}-e \to C_3^*$ if and only if $\hat{G} \to C_3^*$, a contradiction.  

Finally, suppose that $e$ is a cut-edge with ends $u$ and $v$. Let $\hat{G}_u$ and $\hat{G}_v$ be the components of $\hat{G}-e$ containing $u$ and $v$ respectively. Since $\hat{G}$ is $C_3^*$-critical, there exist homomorphisms $\psi_1: \hat{G}_u \to C_3^*$ and $\psi_2: \hat{G}_v \to C_3^*$. By the vertex-transitivity of $C_3^*$, we may assume $\psi_1(u) = \psi_2(v)$ if $e$ is negative and $\psi_1(u) \neq \psi_2(v)$ if $e$ is positive. But, by Observation~\ref{obs:PathExtension}, $\psi_1 \cup \psi_2: \hat{G} \to C_3^*$, a contradiction.
\end{proof}

\begin{lemma}\label{lem: forbidden configs}
No $C_3^*$-critical signed graph contains a vertex of the following type: $1$-vertex, $2_1$-vertex, $4_4$-vertex, or $5_5$-vertex.
\end{lemma}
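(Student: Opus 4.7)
The plan is to rule out each forbidden configuration by locating a proper subgraph of $\hat{G}$, whose homomorphism to $C_3^*$ guaranteed by criticality can then be extended to all of $\hat{G}$—contradicting $\hat{G} \not\to C_3^*$. Throughout, I would use Proposition~\ref{prop:Homo} to work with an edge-sign preserving homomorphism after fixing a switching-equivalent signature, and then invoke Observation~\ref{obs:PathExtension} to perform the extensions along signed paths. The $1$-vertex case is immediate: the unique edge incident to such a vertex is a cut-edge (the two-vertex case admits a direct homomorphism to $C_3^*$, so is not critical), contradicting Lemma~\ref{lem: forbidden edge}.

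For a $2_1$-vertex $v$ with degree-$2$ neighbor $u_1$, other neighbor $u_2$, and second neighbor $w$ of $u_1$, I would delete $\{v, u_1\}$ and extend the homomorphism $\varphi$ of $\hat{G} - \{v, u_1\}$ along the length-$3$ signed path $u_2 v u_1 w$. A short case-check shows that switching at the internal vertices $v$ and $u_1$ reduces the number of negative edges on the path to at most one, at which point Observation~\ref{obs:PathExtension} applies unconditionally, since a length-$3$ path falls into neither exceptional case. The degenerate situation $u_2 = w$ yields a triangle $v u_1 u_2$, handled directly by checking that every positive or negative triangle admits a valid extension of $\varphi(u_2)$ to the other two vertices.

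For a $k_k$-vertex $v$ with $k \in \{4, 5\}$, degree-$2$ neighbors $u_1, \ldots, u_k$, and second neighbors $w_1, \ldots, w_k$, I would delete $\{v, u_1, \ldots, u_k\}$ to obtain a proper subgraph and a homomorphism $\varphi$. For each $i$, the length-$2$ path $w_i u_i v$ (after switching at $u_i$ to have at most one negative edge) extends to $u_i$ for every choice of $\varphi(v)$, except when the path has sign $-$ and $\varphi(v) = \varphi(w_i)$. Since switching at $v$ flips the signs of all such length-$2$ paths simultaneously, I would choose whether to switch at $v$ so that at most $\lfloor k/2 \rfloor \le 2$ of them have sign $-$; this gives at most two forbidden values for $\varphi(v)$, and since $|V(C_3^*)| = 3$, a valid $\varphi(v)$ always exists. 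Triangles through $v$ (the case $w_i = u_j$) consume two neighbors without constraining $\varphi(v)$, and coincidences $w_i = w_j$ among the second neighbors only shrink the forbidden set.

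The main obstacle I expect is the careful bookkeeping of the degenerate sub-cases in the $k_k$-vertex arguments—triangles through $v$, coincident $w_i$'s, and the possibility that $\hat{G} - \{v, u_1, \ldots, u_k\}$ is small or empty—together with verifying that the counting still produces a valid $\varphi(v)$ in each situation. The argument works precisely because $\lfloor k/2 \rfloor \le 2 < 3 = |V(C_3^*)|$ for $k \in \{4, 5\}$, which exactly matches the list of forbidden $k_k$-vertices in the lemma.
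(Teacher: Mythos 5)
Your proposal is correct and follows essentially the same route as the paper: delete the vertex together with its degree-$2$ neighbors, use criticality to color the rest, switch at $v$ so that at most two of the incident length-two paths are negative, and then choose $\varphi(v)$ avoiding the at most two forbidden values before extending via Observation~\ref{obs:PathExtension}. The paper's version is terser (it does not spell out the degenerate coincidences you flag), but the underlying counting argument $\lfloor k/2\rfloor \le 2 < 3$ is identical.
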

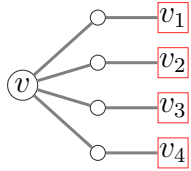
\begin{figure}[ht]
		\centering
		\begin{tikzpicture}[>=latex,
	roundnode/.style={circle, draw=black!80, inner sep=0mm, minimum size=4mm},
	squarednode/.style={rectangle, draw=red!80, inner sep=0mm, minimum size=4mm},
	dotnode/.style={circle, draw=black!80, inner sep=0mm, minimum size=2mm},
	scale=1] 
		\node [roundnode] (v) at (3,1){$v$};
		\node [dotnode](y) at (4,0.7){};
		\node [dotnode](x) at (4,1.3){};
		\node [squarednode](i) at (5,0.7){$v_3$};
		\node [squarednode](j) at (5,1.3){$v_2$};
		
		\node [dotnode](y2) at (4,0.1){};
		\node [dotnode](x2) at (4,1.9){};
		\node [squarednode](i2) at (5,0.1){$v_4$};
		\node [squarednode](j2) at (5,1.9){$v_1$};
		
		\draw [line width =0.4mm, gray, -] (v)--(x)--(j);
		\draw [line width =0.4mm, gray, -] (v)--(y)--(i);
		\draw [line width =0.4mm, gray, -] (v)--(x2)--(j2);
		\draw [line width =0.4mm, gray, -] (v)--(y2)--(i2);
		\end{tikzpicture}
		\caption{A $4_4$-vertex $v$ and its distance-two neighbors.}
		\label{fig:4_4}
\end{figure}
\begin{proof}
Let $\hat{G}$ be a $C_3^*$-critical signed graph.
Suppose to the contrary that there is a vertex $v$ in $\hat{G}$ of a type listed in the lemma. By Lemma~\ref{lem: forbidden edge}, $v$ cannot be a $1$-vertex.

If $v$ is a $2_1$-vertex, let $\hat{G}'$ be the signed graph obtained from $\hat{G}$ by deleting $v$ and its $2$-neighbor. By criticality, there is a homomorphism $\varphi: \hat{G}' \to C_3^*$. But by Observation~\ref{obs:PathExtension}, $\varphi$ can be extended to a homomorphism of $\hat{G}$ to $C_3^*$. This contradicts that $\hat{G}$ is $C_3^*$-critical.

Suppose $v$ is a $4_4$-vertex. Let $v_1,v_2,v_3,v_4$ be the distance-two neighbors of $v$, see Figure~\ref{fig:4_4}. Let $\hat{H}$ be the signed graph obtained from $\hat{G}$ by deleting $v$ and its $2$-neighbors. Since $\hat{H}$ is a proper subgraph of a $C_3^*$-critical signed graph, there is a homomorphism $\varphi$ of $\hat{H}$ to $C^*_3$. Assume that $\varphi$ is edge-sign preserving under the signature $\sigma$. By possibly switching at $v$, we may assume that among four $vv_i$-paths at most two of them are negative, say $vv_1$-path and $vv_2$-path if there exists two. Let $\varphi(v)\in V(C^*_3)\setminus \{\varphi(v_1), \varphi(v_2)\}$. By Observation~\ref{obs:PathExtension}, such a mapping can be extended to those $2$-neighbors of $v$, a contradiction. 

The case when $v$ is a $5_5$-vertex is similar to the $4_4$-vertex case, where again we may assume that there are at most two negative paths, and we omit the proof.
\end{proof}

\begin{lemma}\label{lem: no 32 vertex}
No $C_3^*$-critical signed graph contains a $3_2$-vertex. 
\end{lemma}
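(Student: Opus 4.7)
The plan is to argue by contradiction: assume $\hat{G}$ is $C_3^*$-critical with a $3_2$-vertex $v$ whose $2$-neighbors are $u_1,u_2$ and whose third neighbor is $w$, and let $v_i$ denote the other neighbor of $u_i$ (so $v_i\neq v$ by Lemma~\ref{lem: forbidden edge}). Following the deletion-and-extension strategy used for the $4_4$- and $5_5$-cases, I would set $\hat{H}=\hat{G}-\{v,u_1,u_2\}$, use criticality to obtain a homomorphism $\psi:\hat{H}\to C_3^*$, and then show that after a suitable switching of $\hat{G}$ one can extend $\psi$ to all of $\hat{G}$, contradicting $\hat{G}\not\to C_3^*$.

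By Proposition~\ref{prop:Homo} I may assume $\psi$ is edge-sign preserving under the signature of $\hat{H}$ inherited from $\hat{G}$. Since switching at any subset of $\{v,u_1,u_2\}$ does not affect $\hat{H}$ or $\psi$, I have full freedom to renormalize the signs of the five edges at these vertices. First, switch at each $u_i$ if necessary so that $u_iv_i$ is positive; then the sign of the length-$2$ path $v_i-u_i-v$ equals $\epsilon_i:=\sigma(vu_i)$. Writing $\epsilon_w:=\sigma(vw)$ and observing that switching at $v$ simultaneously flips all three of $\epsilon_w,\epsilon_1,\epsilon_2$, I may further assume that at most one of these is negative, because $\min(n,3-n)\le 1$ for every $n\in\{0,1,2,3\}$.

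With this normalization I extend $\psi$ to $\hat{G}$. If $\epsilon_w=-$, then $\epsilon_1=\epsilon_2=+$, so setting $\phi(v)=\psi(w)$ satisfies the negative edge $vw$, and Observation~\ref{obs:PathExtension} extends $\psi$ across each positive path $v_i-u_i-v$. If $\epsilon_w=+$, then at most one $\epsilon_i$ is negative; I would pick $\phi(v)\in V(C_3^*)\setminus\bigl(\{\psi(w)\}\cup\{\psi(v_i):\epsilon_i=-\}\bigr)$, which is nonempty since at most two of the three colors are excluded. This choice handles the positive edge $vw$ and guarantees $\phi(v)\neq\psi(v_i)$ whenever $v_i-u_i-v$ is negative, so Observation~\ref{obs:PathExtension} again extends $\psi$ to each $u_i$. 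The resulting $\phi$ is an edge-sign preserving homomorphism of the switched $\hat{G}$ to $C_3^*$, whence by Proposition~\ref{prop:Homo} the original $\hat{G}\to C_3^*$, a contradiction. The only delicate point is the sign-normalization: one must confirm that all switchings involved are at vertices outside $V(\hat{H})$ (so $\psi$ stays valid) and that switching at $v$ genuinely flips all three of $\epsilon_w,\epsilon_1,\epsilon_2$; everything else reduces to a direct count of available colors in $V(C_3^*)$.
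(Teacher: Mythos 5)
Your proposal is correct and follows essentially the same route as the paper's proof: delete $v$ and its two $2$-neighbors, invoke criticality to color the rest, switch at $v$ (and the $2$-neighbors) so that at most one of the three paths leaving $v$ is negative, and then choose $\phi(v)$ by the same case analysis before finishing with Observation~\ref{obs:PathExtension}. Your explicit bookkeeping of the signs $\epsilon_w,\epsilon_1,\epsilon_2$ is just a more detailed rendering of the paper's "by possibly switching at $v$" step.
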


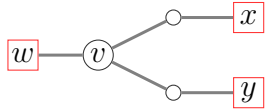
\begin{figure}[ht]
		\centering
		\begin{tikzpicture}[>=latex,
	roundnode/.style={circle, draw=black!80, inner sep=0mm, minimum size=4mm},
	squarednode/.style={rectangle, draw=red!80, inner sep=0mm, minimum size=4mm},
	dotnode/.style={circle, draw=black!80, inner sep=0mm, minimum size=2mm},
	scale=1] 
		\node [squarednode] (u) at (2,1){$w$};
		\node [roundnode] (v) at (3,1){$v$};
		\node [dotnode](y) at (4,0.5){};
		\node [dotnode](x) at (4,1.5){};
		\node [squarednode](i) at (5,0.5){$y$};
		\node [squarednode](j) at (5,1.5){$x$};
		\draw [line width =0.4mm, gray, -] (u)--(v);
		\draw [line width =0.4mm, gray, -] (v)--(x)--(j);
		\draw [line width =0.4mm, gray, -] (v)--(y)--(i);
		\end{tikzpicture}
		\caption{A $3_2$-vertex $v$ and surrounding graph.}
		\label{fig:3_2}
\end{figure}

\begin{proof}
Let $\hat{G}$ be a $C_3^*$-critical signed graph. Suppose to the contrary that there is a $3_2$-vertex $v$ in $\hat{G}$. Let $x$ and $y$ be its distance-two neighbors, and $w$ be the remaining neighbor of $v$. See Figure~\ref{fig:3_2}.

Let $\hat{H}$ be the signed graph obtained from $\hat{G}$ by deleting the vertex $v$ and its two $2$-neighbors. 
By the criticality of $\hat{G}$, there is a homomorphism $\psi: \hat{H} \rightarrow C_3^*$. We claim that $\psi$ can be extended to a homomorphism of $\hat{G}$ to $C_3^*$. Assume that $\sigma$ of $\hat{G}$ is the signature under which $\hat{H}$ admits an edge-sign preserving homomorphism to $C_3^*$. By possibly switching at $v$, we may assume that at most one of the three paths $vw$-path, $vx$-path, and $vy$-path is negative under $\sigma$. Moreover, we may assume each of the three paths has at most one negative edge. If none of the three paths are negative, let $\psi(v) \in V(C_3^*) \setminus \psi(w)$. If $vw$-path is the sole negative path, let $\psi(v) = \psi(w)$. If, without loss of generality, $vx$-path is negative, then let $\psi(v) \in V(C_3^*) \setminus \{\psi(x), \psi(w)\}$. In each case, $\psi$ can be extended to a homomorphism from $G$ to $C_3^*$ by Observation~\ref{obs:PathExtension}.
\end{proof}

\begin{lemma}\label{lem:triangle}
Let $\hat{T}$ be a signed triangle with vertex set $\{v_1,v_2,v_3\}$ and contains at most one negative edge. Let $S_i\subseteq V(C^*_3)$ for $i\in \{1,2,3\}$. There exists an edge-sign preserving homomorphism $\varphi: V(\hat{T}) \to V(C^*_3)$ such that $\varphi(v_i)\in S_i$ for each $i\in \{1,2,3\}$ if one of the following conditions are satisfied: 
\begin{enumerate}[(1)]
\item\label{case:223} $|S_1|\geq 2, |S_2|\geq 2,$ and $|S_3|=3$; 
\item\label{case:133} $|S_1|\geq 1, |S_2|=3,$ and $|S_3|=3$;
\item\label{case:123} $|S_1|\geq 1, |S_2|\geq 2, |S_3|=3,$ and $v_1v_2$ is not the negative edge if $\hat{T}$ is negative;
\item\label{case:222} $|S_1| = |S_2| = |S_3| = 2$, and $S_2 \cup S_3 = V(C_3^*)$.
\end{enumerate}
\end{lemma}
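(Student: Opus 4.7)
The plan is to prove the lemma by direct case analysis on the sign pattern of $\hat{T}$. The governing principle is that an edge-sign preserving homomorphism to $C_3^*$ must send every positive edge to a positive edge, forcing its endpoints to receive distinct colors (as $C_3^*$ has no positive loops), and must send every negative edge to a negative loop, forcing its endpoints to receive the same color (as $C_3^*$ has no non-loop negative edges). The problem thus reduces to a pigeonhole selection on a $3$-element universe of colors.

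If $\hat{T}$ is all-positive, the natural greedy procedure picks $\varphi(v_1) \in S_1$, then $\varphi(v_2) \in S_2 \setminus \{\varphi(v_1)\}$, and finally $\varphi(v_3) \in S_3 \setminus \{\varphi(v_1), \varphi(v_2)\}$. This works in Cases (1), (2), (3) since $|S_3| = 3$ absorbs the two forbidden colors while $|S_2| \geq 2$ handles the prior step. In Case (4), where every $|S_i| = 2$, I would exploit that $|S_2 \cap S_3| = 2 + 2 - 3 = 1$: the three admissible ordered pairs $(\varphi(v_2), \varphi(v_3))$ of distinct colors exhaust all ordered pairs of distinct colors in $V(C_3^*)$, so the three possible ``third'' colors cover all of $V(C_3^*)$, and since $|S_1| = 2$, at least one of them lies in $S_1$.

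If $\hat{T}$ has a single negative edge $e$, I would dispatch by the position of $e$. In Case (1), any two lists of size at least $2$ in a $3$-universe intersect, giving a common color for the endpoints of $e$; the remaining vertex then has an available color via $|S_3| = 3$ (or via $|S_i| \geq 2$ when the remaining vertex is $v_1$ or $v_2$). Case (2) is immediate from $|S_2| = |S_3| = 3$. Case (3) relies on the hypothesis $e \neq v_1v_2$: if $e = v_1v_3$, set $\varphi(v_1) = \varphi(v_3) \in S_1$ (possible since $S_3 = V(C_3^*)$) and fill $v_2$ using $|S_2| \geq 2$; if $e = v_2v_3$, set $\varphi(v_2) = \varphi(v_3) \in S_2$, choosing it outside $S_1$ when $|S_1| = 1$, which is possible because $|S_2| \geq 2$. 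For Case (4): if $e \in \{v_1v_2, v_1v_3\}$ pigeonhole supplies a common color for the endpoints of $e$ and the remaining vertex has two allowed colors, so at least one is valid; if $e = v_2v_3$, the unique element of $S_2 \cap S_3$ is forced on $v_2, v_3$, and it can be avoided in $S_1$ precisely because $|S_1| = 2$.

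The main obstacle is Case (4), where every hypothesis is used without slack. The condition $S_2 \cup S_3 = V(C_3^*)$ is crucial specifically in the all-positive sub-case: without it one could have $S_1 = S_2 = S_3$ of common size $2$, which would require three distinct colors from a $2$-element set. This is the only step where any subtlety arises; the rest of the argument is straightforward pigeonhole on a universe of three colors.
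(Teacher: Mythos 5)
Your proposal is correct and follows essentially the same route as the paper: positive edges force distinct images, the (at most one) negative edge forces equal images, and each case is settled by greedy/pigeonhole selection over the three colors, with the all-positive subcase of (4) handled by the same counting idea (the paper fixes $\varphi(v_1)$ first and notes $S_2\setminus\{\varphi(v_1)\}\neq S_3\setminus\{\varphi(v_1)\}$, whereas you enumerate the admissible pairs for $(v_2,v_3)$ first). One small imprecision: the three admissible ordered pairs do not ``exhaust all ordered pairs of distinct colors'' (there are six of those); the correct justification for your conclusion is that since $|S_2\cap S_3|=1$ no two admissible pairs are swaps of each other, so their three complementary ``third'' colors are distinct and hence cover $V(C_3^*)$.
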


\begin{proof}
For Cases~(\ref{case:223}) and (\ref{case:133}), the argument is the same. In both cases, if $\hat{T}$ contains three positive edges, then we can choose $\varphi(v_1)\in S_1$, $\varphi(v_2)\in S_2\setminus\{\varphi(v_1)\}$ and $\varphi(v_3)\in S_3\setminus \{\varphi(v_1), \varphi(v_2)\}$ in this order. If $\hat{T}$ contains one negative edge $v_iv_j$, then we choose $\varphi(v_i)=\varphi(v_j)\in S_i\cap S_j$ and then for $k\in \{1,2,3\}\setminus \{i,j\}$, $\varphi(v_k)\in S_k\setminus \{\varphi(v_i)\}$. Just be careful in Case (\ref{case:133}), if the only negative edge is $v_2v_3$, then we can always choose $\varphi(v_2)=\varphi(v_3)\in S_2\cap S_3\setminus S_1$ to guarantee that $S_1\setminus \{\varphi(v_2)\}$ is not empty.

For Case (\ref{case:123}), if $\hat{T}$ is positive, then the same argument above works. If $v_2v_3$ is the only negative edge, then we choose $\varphi(v_2)=\varphi(v_3)\in S_2\cap S_3\setminus S_1$ and choose $\varphi(v_1)\in S_1$; if $v_1v_3$ is the only negative edge, then we choose $\varphi(v_1)=\varphi(v_3)\in S_1$ and set $\varphi(v_2)\in S_2\setminus S_1$.

For Case (\ref{case:222}), if $\hat{T}$ is negative then the argument for Case (\ref{case:223}) works. Otherwise $\hat{T}$ is positive and we may choose $\varphi(v_1) \in S_1$. Because $S_2 \cup S_3 = V(C_3^*)$, it follows that $S_2 \setminus \{\varphi(v_1)\} \neq S_3 \setminus \{\varphi(v_1)\}$. Hence we may choose $\varphi(v_2) \in S_2 \setminus \{\varphi(v_1)\}$ and $\varphi(v_3) \in S_3 \setminus \{\varphi(v_1)\}$ so that $\varphi(v_2) \neq \varphi(v_3)$.
\end{proof}

\begin{lemma}\label{lem: C_3* crit forbidden triangles with 31}
No signed triangle $\hat{T}$ of the following type is contained in a $C_3^*$-critical signed graph:
\begin{enumerate}[(i)]
\item\label{caseC: 31-31-53} two $3_1$-vertices and a $5_3$-vertex;
\item\label{caseC: 31-42-42} a $3_1$-vertex and two $4_{2}$-vertices;
\item\label{caseC: 31-30-42} a $3_1$-vertex, a $3$-vertex, and a $4_{2}$-vertex.
\end{enumerate}
\end{lemma}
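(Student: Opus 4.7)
We handle each of the three forbidden configurations (i), (ii), (iii) by the standard critical-subgraph extension scheme already used in Lemmas~\ref{lem: forbidden configs} and~\ref{lem: no 32 vertex}. Assume for contradiction that $\hat{G}$ is $C_3^*$-critical and contains a triangle $\hat{T}=uvw$ of the specified type, and let $\hat{H}$ be the signed graph obtained from $\hat{G}$ by deleting $V(\hat{T})$ together with every $2$-neighbor of $V(\hat{T})$. Criticality of $\hat{G}$ together with Proposition~\ref{prop:Homo} lets us choose a signature $\sigma$ of $\hat{G}$ under which some edge-sign preserving homomorphism $\psi\colon\hat{H}\to C_3^*$ exists; the task is to extend $\psi$ to a homomorphism of all of $\hat{G}$, contradicting criticality.

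For each triangle vertex $x\in\{u,v,w\}$ we record the set $S_x\subseteq V(C_3^*)$ of colors that permit the extension of $\psi$ through the $2$-neighbors of $x$, computed by applying Observation~\ref{obs:PathExtension} to every length-two path $xmx'$ from $x$ to a pre-colored vertex $x'\in V(\hat{H})$. Switching at a middle $2$-vertex does not change the sign of its path, but switching at $x$ itself flips every such path sign simultaneously and also flips the two triangle edges at $x$. A parity argument then gives: $|S_x|=3$ whenever $x$ is a $3_1$-vertex; $|S_x|\geq 2$ whenever $x$ is a $4_2$- or $5_3$-vertex; and $|S_x|\geq 2$ whenever $x$ is a $3_0$-vertex (after choosing the switching that makes its single non-triangle edge positive).

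The central obstacle is that Lemma~\ref{lem:triangle} requires $\hat{T}$ to have at most one negative edge, whereas the sign of $\hat{T}$ is invariant under every switching. A sign-negative $\hat{T}$ is cooperative, since six of the eight switching vectors $(a,b,c)\in\{0,1\}^3$ at $(u,v,w)$ place it in canonical form (one negative edge), leaving ample room to achieve the size goals above. A sign-positive $\hat{T}$ is more delicate: only two switching vectors give canonical form (zero negative edges), and they may conflict with the optimal choices for $|S_u|,|S_v|,|S_w|$. When this conflict occurs, we apply a corrective switch, taken either as a single switch at the unique vertex $x^\ast$ incident to both negative triangle edges or as the equivalent double switch at the remaining two vertices; a subcase analysis then verifies that the resulting triple $(|S_u|,|S_v|,|S_w|)$ still satisfies one of the four conditions of Lemma~\ref{lem:triangle}, where cases~(3) and~(4) cover the residual boundary situations in which all three sets have dropped to size two. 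A final application of Observation~\ref{obs:PathExtension} extends the homomorphism across the remaining $2$-vertices and completes the contradiction.

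The most delicate bookkeeping occurs in case~(i): the single corrective switch at the $5_3$-vertex $v$ could in principle force $|S_v|=0$ when $\psi(v^1),\psi(v^2),\psi(v^3)$ are three distinct colors. We handle this by choosing the alternative double corrective switch at $u$ and $w$, which flips precisely the same pair of triangle edges while leaving $v$'s paths unchanged; then $|S_v|=3$ is preserved at the cost of $|S_u|$ and $|S_w|$ dropping to $2$, and Lemma~\ref{lem:triangle}(1) applies. Case~(ii) is analogous but easier, as a corrective switch at a $4_2$-vertex only reduces $|S|$ to at least $1$, keeping Lemma~\ref{lem:triangle}(1)--(4) applicable in every subcase. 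For case~(iii), Lemma~\ref{lem: no 32 vertex} rules out a $3_2$ generic vertex, while its two triangle neighbors (of degrees $3$ and $4$) prevent it from being $3_3$; the remaining $3_0$- and $3_1$-subcases both reduce to the preceding analyses, with the $3_0$-subcase introducing the extra direct-adjacency constraint $|S|=2$ that still fits the cases of Lemma~\ref{lem:triangle}.
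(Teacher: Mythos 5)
Your overall architecture is the same as the paper's: delete the triangle together with the $2$-neighbors of its vertices, take an edge-sign preserving homomorphism of the remainder, encode the constraints from the pendant paths as sets $S_x\subseteq V(C_3^*)$ via Observation~\ref{obs:PathExtension}, and finish with Lemma~\ref{lem:triangle}. The problem is that the entire mathematical content of the lemma lies in showing that some switching at a subset of $V(\hat{T})$ \emph{simultaneously} puts the triangle into canonical form and produces a triple $(|S_u|,|S_v|,|S_w|)$ matching one of the hypotheses of Lemma~\ref{lem:triangle} --- and at exactly that point you write ``a subcase analysis then verifies\dots'' without performing it. Worse, the intermediate claims you do state are not correct. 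The ``parity argument'' giving $|S_x|=3$ for every $3_1$-vertex is false: that requires the unique pendant path at $x$ to be positive, which cannot always be arranged because the switch at $x$ that would fix it also flips two triangle edges; the paper's proof explicitly lives with up to two negative paths and hence with $|S_x|=2$ at $3_1$-vertices. Likewise, your fallback for the $(2,2,2)$ boundary situation via Lemma~\ref{lem:triangle}(\ref{case:222}) ignores that case's hypothesis $S_2\cup S_3=V(C_3^*)$, which is a genuine condition that must be checked (and which the paper never needs in this lemma --- it avoids $(2,2,2)$ altogether by choosing a better switching). Finally, in case~(\ref{caseC: 31-30-42}) the $3$-vertex is joined to a pre-colored vertex by a single edge; when that edge is negative one gets $|S|=1$, not $|S|=2$ as you claim, and one must then invoke Lemma~\ref{lem:triangle}(\ref{case:123}), whose side condition forces you to control \emph{which} triangle edge is negative. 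The paper arranges the negative triangle edge to be $v_1v_3$ for precisely this reason; your write-up never confronts it.

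The missing idea that makes the verification short is this: switching at \emph{all} of $V(\hat{T})$ flips the sign of every pendant path while leaving all three triangle edges unchanged. So after first normalizing the triangle to at most one negative edge, one extra global flip guarantees that at most two of the five paths (in cases~(\ref{caseC: 31-31-53}) and~(\ref{caseC: 31-42-42})) are negative, and then only two subcases remain: the $\le 2$ negative paths end at distinct triangle vertices (sizes $(2,2,3)$, Lemma~\ref{lem:triangle}(\ref{case:223})) or at the same vertex (sizes $(\ge 1,3,3)$, Lemma~\ref{lem:triangle}(\ref{case:133})). Your ``corrective switch'' framework can in principle be pushed through to the same conclusion, but as written it neither isolates this invariant nor carries out the check, so the proof is not complete.
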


\begin{figure}[ht]
    \centering
	\begin{tikzpicture}[>=latex,
	roundnode/.style={circle, draw=black!80, inner sep=0mm, minimum size=4mm},
	squarednode/.style={rectangle, draw=red!80, inner sep=0mm, minimum size=4mm},
	dotnode/.style={circle, draw=black!80, inner sep=0mm, minimum size=2mm},
	scale=1] 
	
	\begin{scope}[xshift=-0.5cm, yshift = 0cm]
	\node [squarednode] (x) at (5, 1.6){\footnotesize $x_3$};
	\node [squarednode] (x3) at (5, 1){\footnotesize $x_4$};
	\node [squarednode] (x4) at (5, 0.4){\footnotesize $x_5$};
	\node [dotnode] (v32) at (4, 1.6){};
	\node [dotnode] (v33) at (4, 1){};
	\node [dotnode] (v34) at (4, 0.4){};
	\node [roundnode] (v) at (3, 1){\footnotesize $v_3$};
	\node [roundnode] (v1) at (2, 1.5){\footnotesize $v_1$};
	\node [dotnode] (v11) at (1, 1.5){};
	\node [squarednode] (v1') at (0, 1.5){\footnotesize $x_1$};
	\node [roundnode] (v2) at (2, 0.5){\footnotesize $v_2$};
	\node [dotnode] (v22) at (1, 0.5){};
	\node [squarednode] (v2') at (0, 0.5){\footnotesize $x_2$};
	\draw [line width =0.4mm, gray, -] (v2')--(v22)--(v2)--(v)--(v32)--(x); 
	\draw [line width =0.4mm, gray, -] (v1')--(v11)--(v1)--(v);
	\draw [line width =0.4mm, gray, -] (v1)--(v2);
	\draw [line width =0.4mm, gray, -] (x3)--(v33)--(v)--(v34)--(x4);
	\node at (2.5,-0.2) {\footnotesize (\ref{caseC: 31-31-53})};
	\node at (2.33,1) {\footnotesize $\hat{T}$};
	\end{scope}
	
	\begin{scope}[xshift=6.5cm, yshift = 0cm]
	\node [squarednode] (x) at (5, 1.5){\footnotesize $x_{3}$};
	\node [squarednode] (x1) at (5, 2.1){\footnotesize $x_{2}$};
	\node [dotnode] (v21) at (4, 1.5){};
	\node [dotnode] (v22) at (4, 2.1){};
	\node [squarednode] (y) at (5, 0.5){\footnotesize $x_{4}$};
	\node [squarednode] (y1) at (5, -0.1){\footnotesize $x_{5}$};
	\node [dotnode] (v31) at (4, 0.5){};
	\node [dotnode] (v32) at (4, -0.1){};
	\node [roundnode] (v) at (3, 1.5){\footnotesize $v_{2}$};
	\node [roundnode] (v1) at (2, 1){\footnotesize $v_{1}$};
	\node [dotnode] (v11) at (1, 1){};
	\node [squarednode] (v1') at (0, 1){\footnotesize $x_{1}$};
	\node [roundnode] (v2) at (3, 0.5){\footnotesize $v_{3}$};
	\draw [line width =0.4mm, gray, -] (v2)--(v)--(v21)--(x); 
	\draw [line width =0.4mm, gray, -] (v)--(v22)--(x1);
	\draw [line width =0.4mm, gray, -] (v2)--(v32)--(y1);
	\draw [line width =0.4mm, gray, -] (v1')--(v11)--(v1)--(v);
	\draw [line width =0.4mm, gray, -] (v1)--(v2)--(v31)--(y);
	\node at (2.5,-0.2) {\footnotesize (\ref{caseC: 31-42-42})};
	\node at (2.68,1) {\footnotesize $\hat{T}$};
	\end{scope}
	
	\begin{scope}[xshift=3.25cm, yshift = -2.2cm]
	\node [squarednode] (x) at (4, 1.5){\footnotesize $x_2$};
	\node [squarednode] (y) at (5, 0.5){\footnotesize $x_3$};
	\node [squarednode] (y1) at (5, -0.1){\footnotesize $x_4$};
	\node [dotnode] (v31) at (4, 0.5){};
	\node [dotnode] (v32) at (4, -0.1){};
	\node [roundnode] (v) at (3, 1.5){\footnotesize $v_2$};
	\node [roundnode] (v1) at (2, 1){\footnotesize $v_1$};
	\node [dotnode] (v11) at (1, 1){};
	\node [squarednode] (v1') at (0, 1){\footnotesize $x_1$};
	\node [roundnode] (v2) at (3, 0.5){\footnotesize $v_3$};
	\draw [line width =0.4mm, gray, -] (v2)--(v)--(x); 
	\draw [line width =0.4mm, gray, -] (v1')--(v11)--(v1)--(v);
	\draw [line width =0.4mm, gray, -] (v1)--(v2)--(v31)--(y);
	\draw [line width =0.4mm, gray, -] (v2)--(v32)--(y1);
	\node at (2.5,-0.4) {\footnotesize (\ref{caseC: 31-30-42})};
	\node at (2.68,1) {\footnotesize $\hat{T}$};
	\end{scope}
	
	\end{tikzpicture}
	\caption{The three cases in Lemma \ref{lem: C_3* crit forbidden triangles with 31}}
	\label{Cfig:3_1-3_1-4}
\end{figure}
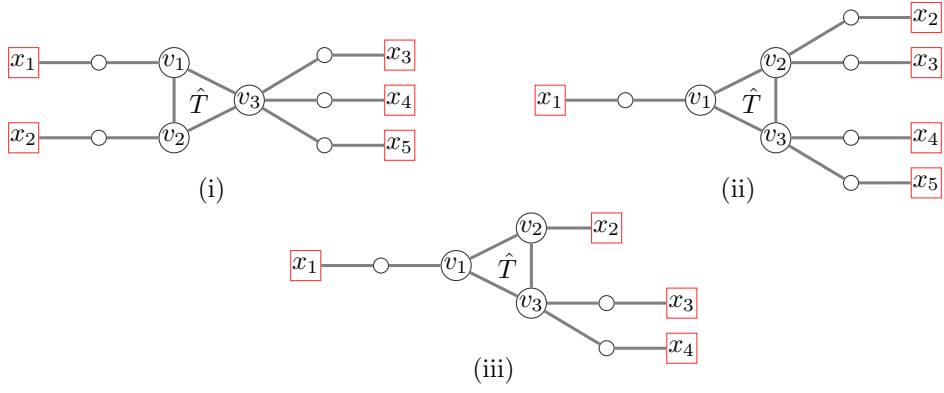

\begin{proof}
Let $\hat{G}$ be a $C_3^*$-critical signed graph. Let the vertices at distance at most two from $\hat{T}$ be labeled as in Figure~\ref{Cfig:3_1-3_1-4}. We proceed by cases. In each case, suppose for contradiction that the described signed triangle $\hat{T}$ does exist in $\hat{G}$. Let $\mathcal{P}$ denote the set of paths in $\hat{G}\setminus E(\hat{T})$ drawn in Figure \ref{Cfig:3_1-3_1-4} that join $v_i$ and $x_j$ for some $i,j$. Let $N$ be the internal vertices of the paths of $\mathcal{P}$. Let $\hat{H}$ denote the signed graph obtained from $\hat{G}$ by deleting $V(T)\cup N$. By the criticality of $\hat{G}$, there is a homomorphism $\psi: \hat{H} \rightarrow C_3^*$. Let $\sigma$ be a signature of $\hat{G}$ such that $(H, \sigma|_H)$ admits an edge-sign preserving homomorphism to $C^*_3$. By possibly switching at some subset of $V(\hat{T})$, we may assume that there is at most one negative edge in $\hat{T}$ with respect to $\sigma$. 

\medskip
\noindent
{\bf Cases (\ref{caseC: 31-31-53}) and (\ref{caseC: 31-42-42}).} By possibly switching on the set $V(T)$, we may assume at most two of the five paths in $\mathcal{P}$ are negative under $\sigma$. Further, by possibly switching on the vertices in $N$, we may assume each path of $\mathcal{P}$ has at most one negative edge. We proceed in two sub-cases. 

First, suppose that there is at most one negative path in $\mathcal{P}$, and denote the end point of that path in $\hat{T}$ by $v_k$ and the other endpoint by $x_k$. We define $S_k=V(C_3^*)\setminus \psi(x_k)$ and $S_i=S_j=V(C^*_3)$ for $v_i,v_j\in V(\hat{T})\setminus \{v_k\}$. Noting $|S_k|=2, |S_i|=|S_j|=3$, by Lemma~\ref{lem:triangle}~(\ref{case:223}), we can choose $\psi(v_\ell)\in S_\ell$ for $\ell\in \{1,2,3\}$ such that $\psi: V(\hat{G})\setminus N\to V(C_3^*)$ is an edge-sign preserving homomorphism of $\hat{G}-N$ to $C^*_3$. Then by Observation~\ref{obs:PathExtension}, we can extend this homomorphism to the remaining vertices of $\hat{G}$. 

Second, if exactly two paths are negative, then let $v_a, v_b$ be the ends of those paths on $T$ and let $x_i, x_j$ be the other ends of these two paths, respectively. If $v_a \neq v_b$, then set $S_a=V(C^*_3)\setminus \{\psi(x_i)\}$, $S_b=V(C^*_3)\setminus \{\psi(x_j)\}$ and $S_k=V(C^*_3)$ where $k\in [3]\setminus \{a,b\}$. Since $|S_a|=|S_b|=2$ and $|S_k|=3$, by Lemma~\ref{lem:triangle}~(\ref{case:223}), we can choose $\psi(v_\ell)\in S_\ell$ for $\ell\in \{1,2,3\}$ such that $\psi: V(\hat{G})\setminus N\to V(C_3^*)$ is an edge-sign preserving homomorphism of $\hat{G}-N$ to $C^*_3$. Then by Observation~\ref{obs:PathExtension}, we can extend this homomorphism to the remaining vertices of $\hat{G}$. 
Similarly, if $v_a = v_b$, then we set $S_a=V(C^*_3)\setminus \{\psi(x_i), \psi(x_j)\}$ and set $S_\alpha=S_\beta=V(C^*_3)$ where $\alpha, \beta\in [3]\setminus \{a\}$. Again, by Lemma~\ref{lem:triangle}~(\ref{case:133}) and Observation~\ref{obs:PathExtension} we can extend this homomorphism to the remaining vertices of $\hat{G}$. In both cases, it is a contradiction as $\hat{G} \rightarrow C_3^*$.

\medskip
\noindent
{\bf Case (\ref{caseC: 31-30-42}).} We may assume $\sigma$ satisfies the following: If $\hat{T}$ has a negative edge, then it is $v_1v_3$; At most two of the four paths in $\mathcal{P}$ are negative; If exactly two such paths are negative, then $v_2x_2$ is negative. The first is accomplished by switching on some subset of $V(\hat{T})$, and the last two by possibly switching on the set $V(\hat{T})$. As before, we assume each path of $\mathcal{P}$ has at most one negative edge.

No matter the sign of $v_2x_2$, there is at most one negative path in $\mathcal{P}\setminus \{v_2x_2\}$. Let $v_k \in \{v_1,v_3\}$ so that $v_k$ is an endpoint of the negative path (and $x_k$ is the other end) if it exists. We proceed in two sub-cases based on the sign of $v_2x_2$.

If $v_2x_2$ is positive, then we set $S_2=V(C^*_3)\setminus \{\psi(x_2)\}, S_k=V(C^*_3)\setminus \{\psi(x_k)\}$ and $S_\ell=V(C^*_3)$ where $\ell\in [3]\setminus \{2, k\}$. By a similar argument as above and by Lemma~\ref{lem:triangle}~(\ref{case:223}), we are done. 

If $v_2x_2$ is negative, then we set $S_2=\{\psi(x_2)\}, S_k=V(C^*_3)\setminus \{\psi(x_k)\}$ and $S_\ell=V(C^*_3)$ where $\ell\in [3]\setminus \{2, k\}$. By a similar argument as above and by Lemma~\ref{lem:triangle}~(\ref{case:123}), we are done. 
\end{proof}

\section{Density of $C_3^*$-critical signed graphs}\label{sec:MainProof}

The \emph{potential} of a graph $G$ is defined as $$\rho(G) = 3v(G)-2e(G).$$ The potential of a signed graph is the potential of its underlying graph. We first give the potential of some simple graphs.
\begin{observation}\label{obs: potential of small complete graphs}
We have $\rho(K_1) = 3$, $\rho(K_2) = 4$, $\rho(K_3) = 3$, and $\rho(K_4) = 0$. 
\end{observation}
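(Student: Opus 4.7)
The statement is an immediate numerical verification, so my plan is simply to evaluate the defining formula $\rho(G) = 3v(G) - 2e(G)$ on each of the four complete graphs in turn. There is no real obstacle here: the only inputs needed are the vertex and edge counts of $K_n$ for $n \in \{1,2,3,4\}$, namely $v(K_n)=n$ and $e(K_n) = \binom{n}{2}$.

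Concretely, I would list the four computations in one line each: for $K_1$, $\rho = 3(1) - 2(0) = 3$; for $K_2$, $\rho = 3(2) - 2(1) = 4$; for $K_3$, $\rho = 3(3) - 2(3) = 3$; and for $K_4$, $\rho = 3(4) - 2(6) = 0$. This matches the four values stated in the observation.

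The hardest part is purely presentational rather than mathematical: deciding whether to display the four calculations inline, in a short \textbf{align*} block, or simply to remark that each value follows by substitution. Given that the observation is used later as a convenient reference point in the potential method (keeping track of which small subgraphs to treat as base cases), I would favor the most terse format—a single sentence noting that the claims follow by direct substitution into the definition of $\rho$, perhaps followed by the four equalities separated by commas. No auxiliary lemma from the preceding section is needed, and no signed-graph structure plays a role since $\rho$ depends only on the underlying graph.
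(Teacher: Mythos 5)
Your proposal is correct and coincides with the paper's (implicit) justification: the observation is verified by direct substitution of $v(K_n)=n$ and $e(K_n)=\binom{n}{2}$ into $\rho(G)=3v(G)-2e(G)$, which is exactly what you do. The paper states the observation without further argument, so your terse one-line-per-graph computation is entirely adequate.
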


In this section, we shall prove the following alternative formulation of Theorem~\ref{thm:main result}.

\begin{theorem}\label{thm:main result alt}
If $\hat{G}$ is $C^*_3$-critical, then $\rho(G) \leq 1$. 
\end{theorem}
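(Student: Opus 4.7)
The plan is to argue by contradiction. Suppose $\hat{G}$ is a counterexample to Theorem~\ref{thm:main result alt} with the fewest vertices; then $\hat{G}$ is $C^*_3$-critical and $\rho(G) \geq 2$. Assigning each vertex $v$ the initial charge $\mu(v) = d(v) - 3$, we have $\sum_{v} \mu(v) = 2e(G) - 3v(G) = -\rho(G) \leq -2$. The goal of the discharging phase will be to redistribute these charges so that the final sum is strictly greater than $-2$, contradicting conservation.

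First I would enrich the forbidden-configuration list via the potential method. The key technical tool should be a potential-monotonicity lemma of the following flavour: for every proper subgraph $\hat{H}$ of $\hat{G}$ with at least one vertex, $\rho(H) \geq 2$, or more generally $\rho(H) \geq \rho(G)$ with a small list of exceptions. The argument would run as follows. Given any proper $\hat{H}$ that violated the bound, one of two things happens: either $\hat{H}$ itself contains a smaller $C^*_3$-critical signed subgraph (contradicting the minimality of $\hat{G}$), or one can glue a homomorphism $\hat{H} \to C^*_3$ together with a homomorphism of the ``rest'' of $\hat{G}$ (obtained by contracting or deleting $\hat{H}$ and appealing to the criticality of the resulting smaller signed graph) into a single homomorphism $\hat{G} \to C^*_3$. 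The gluing would be carried out using Observation~\ref{obs:PathExtension} and Lemma~\ref{lem:triangle}, keeping careful track of signs via switching. From such a monotonicity statement one can then forbid further local configurations whose presence would force a small-potential subgraph, for instance tightly bonded chains of $3_1$-vertices, or additional incidence patterns between $3$-vertices and $4_2$-vertices beyond those already ruled out by Lemma~\ref{lem: C_3* crit forbidden triangles with 31}.

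With the forbidden configurations in hand, I would carry out a discharging argument that moves charge from $3^+$-vertices toward the $2$-vertices, whose initial charge $-1$ accounts for the entire deficit. A natural scheme is for each $3^+$-vertex to send $\tfrac{1}{2}$ along every incident edge to each adjacent $2$-vertex and a smaller fraction (for example $\tfrac{1}{4}$) to each $3^+$-vertex at distance two through a $2$-vertex, the fractions being calibrated so that every $2$-vertex finishes at exactly $0$. Lemmas~\ref{lem: forbidden configs}, \ref{lem: no 32 vertex}, and \ref{lem: C_3* crit forbidden triangles with 31}, together with the newly derived forbidden configurations, then bound the number of $2$-neighbors and the triangle incidences at each $3^+$-vertex, from which it follows that every vertex ends with nonnegative charge. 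Summing yields $\sum \mu'(v) \geq 0 > -2$, the desired contradiction.

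The main obstacle, I expect, will be proving the potential-monotonicity lemma in the signed setting. Unlike the unsigned case of~\cite{KY14}, one must track not only the sizes of subgraphs but also the parities and signs of the paths joining boundary vertices, because whether a partial homomorphism extends across the boundary depends on the signs of the closed walks formed by the extension. I anticipate that the case analysis will mirror that of Lemma~\ref{lem:triangle}, with extra bookkeeping for switching, and will force certain exceptional subgraphs (likely involving $\hat{W}$ or other small $C^*_3$-critical signed graphs) that must be treated separately in the discharging. Once this monotonicity lemma is in place, the discharging step itself should be an intricate but essentially mechanical verification that the forbidden-configuration list is rich enough to make every final charge nonnegative.
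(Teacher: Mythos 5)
Your overall strategy---minimal counterexample, a Kostochka--Yancey-style potential lemma for subgraphs, a list of forbidden configurations, then discharging---is exactly the paper's, but two of your load-bearing steps have genuine gaps. First, the potential lemma you propose ($\rho(H)\geq 2$, or $\rho(H)\geq\rho(G)$ with exceptions) is too weak to drive the rest of the argument. The paper needs $\rho(H)\geq 4$ for every subgraph outside a short exception list ($H=G$, $G=P_2(H)$, $H\in\{K_1,K_3\}$); it is this threshold of $4$, not $2$, that excludes the potential-$3$ configurations $\Theta_2$, $\Theta_3$, $X$ and the potential-$2$ configuration $\Theta_1$, and those exclusions are what give Corollary~\ref{cor:unique triangle} (each vertex in at most one triangle) and the triangle lemmas on which the discharging rests. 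Moreover your sketch of its proof misfires: the branch ``$\hat{H}$ contains a smaller critical subgraph'' cannot occur, since a proper subgraph of a critical graph maps to $C_3^*$; the actual mechanism is to identify the vertices of $\hat{H}$ along a homomorphism $\psi:\hat{H}\to C_3^*$, observe that the quotient still fails to map to $C_3^*$ and hence contains a critical subgraph $\hat{G}_2$ with fewer vertices, invoke minimality of $\hat{G}$ to get $\rho(\hat{G}_2)\leq 1$, and then un-identify to produce a subgraph of $\hat{G}$ of even smaller potential than $\hat{H}$, contradicting an extremal choice of $\hat{H}$. None of this is visible in your outline, and the sign/digon bookkeeping in the quotient (no positive loops, no digons, which requires $\hat{G}\neq P_2(\hat{H})$) is where the signed-graph subtlety actually lives.

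Second, your sample discharging scheme does not balance. With initial charge $d(v)-3$ and your rules, a $3_1$-vertex starts at $0$, pays $\tfrac12$ to its $2$-neighbor, and the symmetric $\tfrac14$ exchange across the $2$-vertex nets to zero, leaving it at $-\tfrac12$; a $4_3$-vertex likewise ends at $-\tfrac12$. No local calibration of fractions fixes this: these vertices genuinely need an external subsidy, and supplying it is the main content of the paper's Lemmas~\ref{lem: 31 in triangle}--\ref{lem:$0-43-43$} (every $3_1$-vertex lies in a positive triangle and has a ``rich'' neighbor there; every $4_3$-vertex has a ``wealthy'' neighbor adjacent to all three of its distance-two neighbors; and the donors are never overdrawn). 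Your plan gestures at ``additional incidence patterns'' but does not identify these as the decisive configurations, so the verification that every final charge is nonnegative cannot be carried out from what you have written.
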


To prove it, we assume to the contrary that there exists a $C^*_3$-critical signed graph $\hat{G}$ with $\rho(G) \geq 2$ such that among all such counterexamples to Theorem~\ref{thm:main result alt} it has the minimum number of vertices. This means any $C^*_3$-critical signed graph $\hat{G}'$ with $|V(G')|<|V(G)|$ satisfies that $\rho(G')\leq 1$. We fix the minimum counterexample $\hat{G}$ for the rest of this section and finally arrive at a contradiction to prove Theorem~\ref{thm:main result alt}. Initially, we will develop several structural properties of $\hat{G}$, after which we apply a discharging argument to force a contradiction. 

Given a (signed) graph $\hat{H}$, let $P_2(\hat{H})$ denote a graph obtained from (the underlying graph of) $\hat{H}$ by adding a new $2$-vertex incident to two new edges whose other ends are distinct vertices in $\hat{H}$. By a slight abuse of notation, we sometimes treat $P_2(\hat{H})$ as a signed graph.

\begin{lemma}\label{lem: main lemma}
Let $\hat{H}$ be a subgraph of $\hat{G}$. Then 
\begin{enumerate}[(i)]
    \item\label{case: G=H} $\rho({H}) \geq 2$, if $\hat{G} = \hat{H}$;
    \item\label{case: G=P2H} $\rho({H}) \geq 3$, if $\hat{G} = P_2(\hat{H})$;
    \item\label{case: K1 K3} $\rho({H}) = 3$, if $H = K_1$ or $K_3$;
    \item\label{case: p geq4} $\rho({H}) \geq 4$, otherwise.
\end{enumerate}
\end{lemma}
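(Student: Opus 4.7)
Cases (i)--(iii) should take only a few lines each. Case (i) is exactly the standing assumption that the minimum counterexample $\hat{G}$ satisfies $\rho(G) \geq 2$. For case (ii), the signed graph $P_2(\hat{H})$ has one more vertex and two more edges than $\hat{H}$, so $\rho(P_2(\hat{H})) = \rho(\hat{H}) - 1$, and the hypothesis $\rho(G) \geq 2$ forces $\rho(H) \geq 3$. Case (iii) is immediate from Observation~\ref{obs: potential of small complete graphs}.

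The substantive case is (iv), which I would prove by contradiction via the potential method of Kostochka--Yancey adapted to signed graphs. Suppose $\hat{H} \subseteq \hat{G}$ has $\rho(H) \leq 3$ but is not covered by cases (i)--(iii). I would first dispose of small $\hat{H}$: since $\hat{G}$ has no parallel edges by Lemma~\ref{lem: forbidden edge}, any $\hat{H}$ with $v(H) \in \{1,2,3\}$ and $\rho(H) \leq 3$ must have underlying graph $K_1$ or $K_3$, placing it in case (iii); hence I may assume $v(H) \geq 4$. In particular, since $|V(C_3^*)| = 3$, any homomorphism $\hat{H} \to C_3^*$ identifies at least two vertices of $\hat{H}$, which ensures the auxiliary signed graph below is strictly smaller than $\hat{G}$.

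Since $\hat{H}$ is a proper subgraph of the critical $\hat{G}$, there exists a homomorphism $\varphi: \hat{H} \to C_3^*$; after switching $\hat{G}$ appropriately, by Proposition~\ref{prop:Homo} I may assume $\varphi$ is edge-sign preserving. My construction is to form $\hat{G}'$ by removing $V(H)$ from $\hat{G}$, attaching a fresh copy of $C_3^*$, and, for every edge $uv \in E(G)$ with $u \in V(H)$ and $v \notin V(H)$, inserting an edge from $\varphi(u) \in V(C_3^*)$ to $v$ with the same sign as $uv$. A direct calculation yields
\[
\rho(G') = \rho(G) - \rho(H) + 3 \geq 2 \quad \text{and} \quad v(G') = v(G) - v(H) + 3 < v(G).
\]
The key observation is that $\hat{G}' \not\to C_3^*$: if $\psi : \hat{G}' \to C_3^*$ were such a homomorphism, then the map defined by $\psi$ on $V(G) \setminus V(H)$ and by $\psi \circ \varphi$ on $V(H)$ would be a homomorphism $\hat{G} \to C_3^*$, contradicting the criticality of $\hat{G}$.

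Hence $\hat{G}'$ contains a $C_3^*$-critical subgraph $\hat{G}''$ with $v(G'') < v(G)$, and by the minimality of $\hat{G}$ as counterexample, $\rho(G'') \leq 1$. The hard part will be reconciling this with $\rho(G') \geq 2$: the passage $\hat{G}' \supseteq \hat{G}''$ is not in general potential-non-decreasing, and the identification step can introduce digons (whenever two vertices of $V(H)$ with the same $\varphi$-image share an outside neighbor joined to them by edges of opposite signs), so $\hat{G}'$ itself need not be a valid $C_3^*$-critical candidate. I expect the argument to overcome this via an extremal choice of $\hat{H}$---for instance, maximising $v(H)$ among subgraphs with $\rho(H) \leq 3$---together with the structural lemmas of Section~\ref{sec:PropertiesOfCriticalGraphs} (no cut or parallel edges, Lemma~\ref{lem: forbidden edge}) to bound $\rho(G'')$ below by $2$ and close the contradiction.
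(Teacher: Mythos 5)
Your cases (i)--(iii) and the setup for (iv) match the paper: the extremal choice of $\hat{H}$, the reduction to $v(H)\geq 4$, the identification of vertices of $\hat{H}$ according to an edge-sign preserving homomorphism $\varphi$, the observation that the resulting graph does not map to $C_3^*$, and the extraction of a critical subgraph $\hat{G}''$ with $\rho(G'')\leq 1$ by minimality of $\hat{G}$. But the proof is not complete, and you have correctly located exactly where it is missing: the reconciliation of $\rho(G'')\leq 1$ with the rest of the data. The paper does \emph{not} close the argument by bounding $\rho(G'')$ below by $2$, as you propose to attempt; that inequality is false in general and is not the route. Instead, the paper \emph{lifts $\hat{G}''$ back into $\hat{G}$}: letting $X$ be the set of identified vertices, it forms $\hat{G}_3$ from the disjoint union of $\hat{G}''-X$ and $\hat{H}$ by adding, for each edge of $\hat{G}''$ between $V(\hat{G}'')\setminus X$ and $X$, one representative preimage edge of $\hat{G}$. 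Then $\hat{H}\subsetneq \hat{G}_3\subseteq \hat{G}$ and the potential is additive over this decomposition,
$$\rho(\hat{G}_3)=\rho(\hat{G}'')+\rho(\hat{H})-\rho(G''[X])\leq 1+3-3=1,$$
using $\rho(G''[X])\geq 3$ since $|X|\leq 3$. This exhibits a strictly larger subgraph of $\hat{G}$ violating (iv) (and, being too small to be $\hat{G}$ or $P_2$ of it, not exempted by (i)--(iii)), contradicting the extremality of $\hat{H}$. Without this lift-back step your argument does not terminate.

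Two further points. First, the digon obstruction you flag is not something to be waved away by "structural lemmas''; the paper resolves it with a specific potential argument: a digon in the quotient comes from a negative $2$-path $\hat{P}$ with both ends in $\hat{H}$ and internal vertex outside, whence $\rho(\hat{P}+\hat{H})\leq \rho(\hat{H})-1\leq 2$, so by extremality and case (i) one would need $\hat{G}=\hat{P}+\hat{H}=P_2(\hat{H})$, which is excluded by hypothesis in case (iv). This is precisely why the statement carves out case (ii) separately, and your sketch does not make that connection. Second, your extremal choice (maximising $v(H)$ only) is weaker than the paper's (maximising $v(H)+e(H)$); the latter is what yields that $\hat{H}$ is an induced subgraph, which the identification step relies on, and it is also what the final contradiction via $\hat{G}_3$ is measured against. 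Your replacement of $\varphi(\hat{H})$ by a full fresh copy of $C_3^*$ rather than the actual image is a harmless variant, but the missing lift-back construction and the digon resolution are genuine gaps.
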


\begin{proof}
It is straightforward to verify (\ref{case: G=H}), (\ref{case: G=P2H}), and (\ref{case: K1 K3}). Indeed, if $\hat{H} = \hat{G}$ then the lemma is satisfied by our assumption that $\rho(G) \geq 2$; if $P_2(\hat{H}) = \hat{G}$, then $\rho(H) = \rho(G) -3+4 \geq 3$; if $H = K_1$ or $K_3$, then the lemma is satisfied by Observation~\ref{obs: potential of small complete graphs}. 

\smallskip
Suppose for contradiction that (\ref{case: p geq4}) is false. Let $\hat{H}\subseteq \hat{G}$ be a subgraph which does not satisfy (\ref{case: p geq4}), chosen so that among all such subgraphs $v(\hat{H}) + e(\hat{H})$ is maximum. Note that $\hat{H}\neq \hat{G}$, $P_2(\hat{H})\neq \hat{G}$, $\hat{H}\not\in \{K_1, K_3\}$, and $\rho(\hat{H}) \leq 3$. We first claim that $\hat{H}$ is an induced subgraph of $\hat{G}$. Otherwise, assume that $e \not\in E(\hat{H})$ is an edge connecting two vertices of $\hat{H}$. Note that $\rho(\hat{H}+e) = \rho(\hat{H}) - 2 \leq 1< \rho(\hat{H})$, and $v(\hat{H}+e)+e(\hat{H}+e)>v(\hat{H})+e(\hat{H})$. By (\ref{case: G=H}), (\ref{case: G=P2H}), and (\ref{case: K1 K3}), it follows that $\hat{H}+e\neq \hat{G}$, $P_2(\hat{H}+e)\neq \hat{G}$, $\hat{H}+e\not\in \{K_1, K_3\}$. Thus the existence of $\hat{H}+e$ contradicts the maximality of $\hat{H}$. Therefore, $\hat{H}$ is an induced subgraph.

By Observation~\ref{obs: potential of small complete graphs}, we know that $v(\hat{H}) \geq 4$. Since $\hat{H}$ is a proper subgraph of $\hat{G}$ and $\hat{G}$ is $C_3^*$-critical, there exists a homomorphism $\psi: \hat{H} \rightarrow C_3^*$. We may assume that $\sigma$ is a signature of $\hat{G}$ such that $\psi$ is an edge-sign preserving homomorphism of $(H, \sigma|_{H})$ to $C_3^*$. We build a signed graph $\hat{G}_1$ from $\hat{G}$ by identifying any $u,v \in V(\hat{H})$ whenever $\psi(u) = \psi(v)$ and deleting resulting parallel edges of the same sign so that only one representative remains.

We proceed with four observations about $\hat{G}_1$.
First, $\hat{G}_1$ has no positive loops because $\psi(u) = \psi(v)$ only if $uv$ is not a positive edge in $\hat{G}$.
Second, $v(\hat{G}_1) + e(\hat{G}_1) < v(\hat{G}) + e(\hat{G})$ as $v(\hat{H}) \geq 4> v(C^*_3)$ which means at least two vertices were identified while forming $\hat{G}_1$. Third, $\hat{G}_1 \not\rightarrow C_3^*$ because otherwise $\hat{G} \rightarrow C_3^*$ by the transitivity of homomorphisms, which would contradict that $\hat{G}$ is $C_3^*$-critical. Fourth and finally, $\hat{G}_1$ has no digon. Suppose to the contrary that there is a digon in $\hat{G}_1$. This means that in $\hat{G}$ there is a negative path $\hat{P}$ of length $2$ (with respect to the signature $\sigma$) so that the ends of $\hat{P}$ are in $V(\hat{H})$ while the internal vertex of $\hat{P}$ is in $V(\hat{G} - \hat{H})$. Then $\rho(\hat{P} + \hat{H})=\rho(\hat{H}) +3 -4 \leq 2$. Note that $\hat{P} + \hat{H}$ is also a subgraph of $\hat{G}$ but has more vertices plus edges than $\hat{H}$ and moreover, $\rho(\hat{P} + \hat{H}) \leq 2< \rho(\hat{H})$. 
By the choice of $\hat{H}$ and claim~(\ref{case: G=H}), it must be that $\hat{G}=\hat{P}+\hat{H}$, contradicting that $\hat{G} \neq P_2(\hat{H})$. Thus $\hat{G}_1$ has no digon.

By the first, third, and fourth observations above, $\hat{G}_1$ contains a $C_3^*$-critical subgraph $\hat{G}_2$. Let $X \subseteq V(\hat{G}_1)$ be the identified vertices of $\hat{H}$ (including trivial identification if $\psi^{-1}(u)$ is a singleton set for some $u\in V(C^*_3)$). 
First, observe that $X \cap V(\hat{G}_2) \neq \emptyset$ because otherwise $\hat{G}_2 \subsetneq \hat{G}$ but both of them are $C_3^*$-critical. Also, $V(\hat{G}_2)\setminus X \neq \emptyset$ because otherwise $\hat{G}_2$ is a subgraph of $C^*_3$ which would mean $\hat{G}_2 \rightarrow C_3^*$, a contradiction. Since $v(\hat{G}_2) + e(\hat{G}_2)\leq v(\hat{G}_1) + e(\hat{G}_1) < v(\hat{G}) + e(\hat{G})$, by the choice of the minimum counterexample $\hat{G}$ (to Theorem~\ref{thm:main result alt}), we know that
$\rho(\hat{G}_2) \leq 1$.

We now construct a signed graph $\hat{G}_3$ from the disjoint union of $\hat{G}_2-X$ and $\hat{H}$ by adding the following edges. For each vertex $v \in V(\hat{G}_2) \setminus X$ and each vertex $u \in X$, if $vu \in E(\hat{G}_2)$, then choose a representative edge $vw \in E(\hat{G})$ for some $w \in \psi^{-1}(u)$ to be included in $E(\hat{G}_3)$. In this way, $\hat{G}_3 \subseteq \hat{G}$, and because $V(\hat{G}_2)\setminus X \neq \emptyset$, $\hat{H} \subsetneq \hat{G}_3$.

Now we consider the vertices and edges in $\hat{G_3}$. It is straightforward to see that $V(\hat{G}_3) = V(\hat{G}_2) \cup V(\hat{H}) \setminus V(X)$, and so
\begin{equation}\label{eqn:vertices of G3}
    v(\hat{G}_3) = v(\hat{G}_2) + v(\hat{H}) - |X|.
\end{equation}
An edge of $\hat{G}_3$ is of one of three types: (a) edges in $\hat{G}_2 \setminus X$; (b) edges in $\hat{H}$; and (c) edges with one end in $\hat{G}_2 \setminus X$, and the other end in $\hat{H}$.
Similarly, edges in $\hat{G_2}$ have one of three types: (a) edges in $\hat{G}_2 \setminus X$; (d) edges with both ends in $X$; and (e) edges with one end in $\hat{G}_2 \setminus X$, and the other end in $X$. But, by construction, there is a one-to-one correspondence between edges of type (c) and (e). Therefore,
\begin{equation}\label{eqn:edges of G3}
    e(\hat{G}_3) = e(\hat{G}_2) + e(\hat{H}) - e(G_2[X]).
\end{equation}
Hence, noting $\rho(\hat{G}_2) \leq 1$, $\rho(\hat{H}) \leq 3$, and $\rho(G_2[X]) \geq 3$ (by Observation \ref{obs: potential of small complete graphs}), by Equations~(\ref{eqn:vertices of G3}) and (\ref{eqn:edges of G3}), it follows that $$\rho(\hat{G}_3) = \rho(\hat{G}_2) + \rho(\hat{H}) - \rho(G_2[X])\leq 1+3-3\leq 1.$$ 
Since $\hat{H} \subsetneq \hat{G}_3 \subseteq \hat{G}$ and $\rho(\hat{G}_3)<\rho(\hat{H})$, by claims~(\ref{case: G=H}), (\ref{case: G=P2H}), and (\ref{case: K1 K3}), $\hat{G}_3$ is a larger subgraph of $\hat{G}$ than $\hat{H}$ which doesn't satisfy claim~(\ref{case: p geq4}). This contradicts our choice of $\hat{H}$ and completes the proof.
\end{proof}

The following observation will aid in the proof of Lemma~\ref{lem: forbidden theta subgraphs}.

\begin{observation}\label{obs:cycles for forbidden theta subgraph lemma}
Let $C$ be a cycle of a $C_3^*$-critical signed graph with a vertex of degree~$2$. \begin{enumerate}[(i)]
    \item\label{case:4cycle} If $C$ is a $4$-cycle, then $C$ is negative. If, additionally, $C$ has a chord then the two triangles formed by this chord have different signs.
    \item\label{case:3cycle} If $C$ is a $3$-cycle, then $C$ is positive.
\end{enumerate}
\end{observation}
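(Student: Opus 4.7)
The plan is to treat both statements as applications of the local extension arguments that underlie Lemmas~2.7--2.9: delete the degree-$2$ vertex of $C$, use criticality of $\hat{G}$ to obtain a homomorphism $\psi$ of the remaining signed graph to $C_3^*$, and, assuming $C$ had the wrong sign, exhibit an extension of $\psi$ back to the deleted vertex, thereby contradicting $C_3^*$-criticality of $\hat{G}$.

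For part (ii), let $C=uvw$ with $\deg(v)=2$, and suppose for contradiction that $C$ is negative. Criticality yields, via Proposition~2.3, an edge-sign preserving homomorphism $\psi:\hat{G}-v\to C_3^*$ under some switching-equivalent signature $\sigma$. I split on $\sigma(uw)$. If $\sigma(uw)=+$, then $\psi(u)\neq\psi(w)$, and because $C$ is negative I can switch at $v$ to arrange $\sigma(vu)=+$ and $\sigma(vw)=-$, after which $\psi(v):=\psi(w)$ satisfies both incident-edge constraints. If $\sigma(uw)=-$, then $\psi(u)=\psi(w)$, and switching at $v$ gives $\sigma(vu)=\sigma(vw)=+$, so any $\psi(v)\in V(C_3^*)\setminus\{\psi(u)\}$ works. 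Either branch forces $\hat{G}\to C_3^*$, the desired contradiction.

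For the first half of (i), label $C=v_1v_2v_3v_4$ with $\deg(v_1)=2$ and suppose $C$ is positive. Criticality again gives $\psi:\hat{G}-v_1\to C_3^*$. Positivity of $C$ translates to $\sigma(v_1v_2)\sigma(v_1v_4)=\sigma(v_2v_3)\sigma(v_3v_4)$, so by switching at $v_1$ I can match the sign of the path $v_2v_3v_4$ to $(\sigma(v_1v_2),\sigma(v_1v_4))$ in my preferred way. If the path $v_2v_3v_4$ is positive, I set $\sigma(v_1v_2)=\sigma(v_1v_4)=+$; a quick subcase on whether the two path edges are both positive or both negative shows $V(C_3^*)\setminus\{\psi(v_2),\psi(v_4)\}$ is nonempty, so a legal $\psi(v_1)$ exists. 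If the path $v_2v_3v_4$ is negative, then exactly one of its edges is negative, which forces $\psi(v_2)\neq\psi(v_4)$, and I switch so that $(\sigma(v_1v_2),\sigma(v_1v_4))=(+,-)$, after which $\psi(v_1):=\psi(v_4)$ is legal. Either branch extends $\psi$, contradiction.

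The chord claim in (i) reduces to a one-line sign identity: if $v_2v_4$ chords the negative $4$-cycle $C$, the two triangles $T_1=v_1v_2v_4$ and $T_2=v_2v_3v_4$ share only this chord, so
\[
\sigma(T_1)\,\sigma(T_2)\;=\;\sigma(v_2v_4)^{2}\cdot\sigma(v_1v_2)\sigma(v_2v_3)\sigma(v_3v_4)\sigma(v_4v_1)\;=\;\sigma(C)\;=\;-,
\]
so $T_1$ and $T_2$ have opposite signs. I expect the main bookkeeping hazard to lie in the first half of (i): in each subcase one must verify that the chosen switching at $v_1$ leaves enough room in the three-element set $V(C_3^*)$ to satisfy both edge constraints at $v_1$ simultaneously, using only the fixed values $\psi(v_2),\psi(v_3),\psi(v_4)$. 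The triangle case (ii) avoids this difficulty because $\sigma(uw)$ already pins $\psi(u)$ and $\psi(w)$ to either equal or distinct values.
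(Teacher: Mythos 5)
Your proposal is correct and follows essentially the same route as the paper: the paper justifies the observation in one sentence by noting that if $C$ had the wrong sign, deleting the $2$-vertex would not change whether the graph maps to $C_3^*$, contradicting criticality, and your argument is simply the worked-out version of that extension step (together with the trivial sign identity $\sigma(T_1)\sigma(T_2)=\sigma(C)$ for the chord claim). All the case checks are sound, including the point that switching at the deleted vertex leaves the homomorphism on the rest of the graph intact.
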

To justify the observation, note that if $C$ does not have the prescribed sign, then the subgraph formed by deleting the 2-vertex maps to $C_3^*$ if and only if $\hat{H}$ does.

\smallskip
Let $\Theta_1, \Theta_2, \Theta_3,$ and $X$ be the graphs depicted in Figure~\ref{fig:Theta1} \ref{fig:Theta2}, \ref{fig:Theta3}, and \ref{fig:X}, respectively.

\begin{figure}[ht]
    \centering 
    \begin{subfigure}[t]{.23\textwidth}
		\centering
    \begin{tikzpicture}[>=latex, 
        roundnode/.style={circle, draw=black!80, inner sep=0mm, minimum size=4mm},
	    scale=.26] 
	\begin{scope}[xshift=0cm, yshift = 0cm]
    \foreach \i in {1,2,3,4}{
        \draw[rotate=90*(\i)-90] (0,4) node[roundnode] (x_{\i}){\scriptsize ${v_{\i}}$};}
	
    \foreach \i/\j in {1/2,2/3,3/4,4/1,2/4}{
        \draw[line width=0.4mm, gray] (x_{\i}) -- (x_{\j});}
    \end{scope}
    \end{tikzpicture}
    \caption{$\Theta_1$}
		\label{fig:Theta1}     
   \end{subfigure}
   \begin{subfigure}[t]{.23\textwidth}
		\centering
    \begin{tikzpicture}[>=latex, 
        roundnode/.style={circle, draw=black!80, inner sep=0mm, minimum size=4mm},
	    scale=.26] 
    \begin{scope}[xshift=10cm, yshift = 0cm]
    \node[roundnode] (x_{5}){\scriptsize ${v_5}$};
    \foreach \i in {1,2,3,4}{
        \draw[rotate=90*(\i)-90] (0,4) node[roundnode] (x_{\i}){\scriptsize ${v_{\i}}$};}
	
    \foreach \i/\j in {1/2,2/3,3/4,4/1,2/5,5/4}{
        \draw[line width=0.4mm, gray] (x_{\i}) -- (x_{\j});}
    \end{scope}
    \end{tikzpicture}
    \caption{$\Theta_2$}
		\label{fig:Theta2}     
   \end{subfigure}
   \begin{subfigure}[t]{.23\textwidth}
		\centering
    \begin{tikzpicture}[>=latex, 
        roundnode/.style={circle, draw=black!80, inner sep=0mm, minimum size=4mm},
	    scale=.26] 
    \begin{scope}[xshift=20cm, yshift = 0cm]

    \node[roundnode] (x_{1}) at (0,4) {\scriptsize ${v_1}$};
    \node[roundnode] (x_{2}) at (-3.5,0) {\scriptsize ${v_2}$};
    \node[roundnode] (x_{3}) at (-3.5,-4) {\scriptsize ${v_3}$};
    \node[roundnode] (x_{4}) at (3.5,-4) {\scriptsize ${v_4}$};
    \node[roundnode] (x_{5}) at (3.5,0) {\scriptsize ${v_5}$};
	
    \foreach \i/\j in {1/2,2/3,3/4,4/5,5/1,2/5}{
        \draw[line width=0.4mm, gray] (x_{\i}) -- (x_{\j});}
    \end{scope}
    \end{tikzpicture}
    \caption{$\Theta_3$}
		\label{fig:Theta3}     
   \end{subfigure}
   \begin{subfigure}[t]{.23\textwidth}
		\centering
    \begin{tikzpicture}[>=latex, 
        roundnode/.style={circle, draw=black!80, inner sep=0mm, minimum size=4mm},
	    scale=.26] 
    \begin{scope}[xshift=30cm, yshift = 0cm]
    \node[roundnode] (x_{1}) at (-3.5,4) {\scriptsize ${v_1}$};
    \node[roundnode] (x_{2}) at (3.5,4) {\scriptsize ${v_2}$};
    \node[roundnode] (x_{3}) at (0,0) {\scriptsize ${v_3}$};
    \node[roundnode] (x_{4}) at (-3.5,-4) {\scriptsize ${v_4}$};
    \node[roundnode] (x_{5}) at (3.5,-4) {\scriptsize ${v_5}$};
	
    \foreach \i/\j in {1/2,2/3,3/1,3/4,4/5,3/5}{
        \draw[line width=0.4mm, gray] (x_{\i}) -- (x_{\j});}
    \end{scope}
    \end{tikzpicture}
    \caption{$X$}
		\label{fig:X}     
   \end{subfigure}
    \caption{Graphs that are not a subgraph of $\hat{G}$.}
    \label{fig:H not isomorphic}
\end{figure}
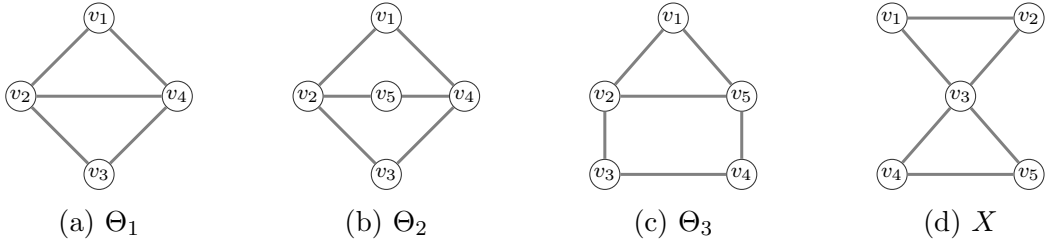

\begin{lemma}\label{lem: forbidden theta subgraphs}
$\hat{G}$ has no subgraph whose underlying graph is isomorphic to any of $\Theta_1, \Theta_2, \Theta_3$, or $X$.
\end{lemma}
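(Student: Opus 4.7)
The plan is to apply Lemma~\ref{lem: main lemma}: computing $\rho(\Theta_1) = 2$ and $\rho(\Theta_2) = \rho(\Theta_3) = \rho(X) = 3$, and noting that none of the four graphs is $K_1$ or $K_3$, the appearance of any of them as a subgraph $\hat H \subseteq \hat G$ forces---by claim (\ref{case: p geq4}) of that lemma---either $\hat H = \hat G$ or (only when $\rho(\hat H) \geq 3$, hence not for $\Theta_1$) $\hat G = P_2(\hat H)$. These two cases are then handled separately.

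If $\hat H = \hat G$, each of $\Theta_1, \Theta_2, \Theta_3$ is by inspection a signed theta graph, so by Lemma~\ref{lem:theta} it admits a homomorphism to $C_3^*$, contradicting $C_3^*$-criticality of $\hat G$. If $\hat G = X$, the vertex $v_3$ is a cut vertex and each of the two triangles contains vertices of degree $2$ (in fact $v_1, v_2, v_4, v_5$ are all $2$-vertices); by Observation~\ref{obs:cycles for forbidden theta subgraph lemma}(\ref{case:3cycle}) both triangles are positive, so I can map each bijectively into the unique positive triangle of $C_3^*$, using the same image for $v_3$ in both. This gives a homomorphism $X \to C_3^*$ and again a contradiction.

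The remaining case is $\hat G = P_2(\hat H)$ with $\hat H \in \{\Theta_2, \Theta_3, X\}$; write $u$ for the new $2$-vertex and $v_a, v_b$ for its two neighbours. A first subcase is structural: e.g.\ for $\hat H = \Theta_2$, if $\{v_a, v_b\}$ coincides with the pole pair $\{v_2, v_4\}$, then both poles become $4$-vertices all of whose four neighbours in $\hat G$ have degree $2$, contradicting Lemma~\ref{lem: forbidden configs}. Similar structural obstructions can be ruled out via Lemma~\ref{lem: forbidden configs} or Lemma~\ref{lem: no 32 vertex} for several other attachments of $u$, and the remaining configurations are those in which $\hat G$ must be coloured directly. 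For these, I plan to take a homomorphism $\psi$ of $\hat H$ into $C_3^*$ (which exists by the previous paragraph), represent it as edge-sign preserving after a suitable switching via Proposition~\ref{prop:Homo}, and extend $\psi$ to $u$ using Observation~\ref{obs:PathExtension}. The extension succeeds unless the length-$2$ path $v_a u v_b$ is negative and $\psi(v_a) = \psi(v_b)$; this obstruction can be removed by using the freedom in Lemma~\ref{lem:theta}'s proof to choose which pair of equal-parity paths to contract (for $\hat H$ a theta graph), and by colouring the two triangles of $X$ independently past the cut vertex $v_3$ (for $\hat H = X$), to arrange $\psi(v_a) \neq \psi(v_b)$.

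The main obstacle I anticipate is carrying out this last step uniformly over all remaining attachments of $u$ and all switching classes of signatures on $\hat H$. Because each of $\Theta_2, \Theta_3, X$ has very few cycles, and only a few orbits of unordered vertex pairs under its automorphism group, this reduces to a small but somewhat tedious finite case check, which is where the bulk of the work will lie.
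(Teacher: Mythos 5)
Your reduction coincides with the paper's: the potential computations, the appeal to Lemma~\ref{lem: main lemma} to force $\hat{G}=\hat{H}$ or $\hat{G}=P_2(\hat{H})$ (and only the former for $\Theta_1$), the use of Lemma~\ref{lem:theta} for the theta graphs, and the positive-triangle argument for $\hat{G}=X$ all match. Where you genuinely diverge is in the $P_2$ cases. The paper applies Observation~\ref{obs:cycles for forbidden theta subgraph lemma} to $\hat{G}$ itself (every $4$-cycle through a $2$-vertex is negative, every triangle through one is positive, plus a parity argument on negative $4$-cycles) to pin the signature of $P_2(\hat{H})$ down to one or two switching classes per attachment and then exhibits an explicit homomorphism of the whole graph; you instead color $\hat{H}=\hat{G}-u$ (which maps to $C_3^*$ by criticality) and extend over the new $2$-vertex via Observation~\ref{obs:PathExtension}, choosing the homomorphism so that the attachment vertices $v_a,v_b$ get distinct images. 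Your criterion is in fact more robust than you state: a positive $2$-path always extends and a negative one extends exactly when the endpoints have distinct images, so \emph{any} homomorphism of $\hat{H}$ with $\psi(v_a)\neq\psi(v_b)$ extends regardless of which switching representative realizes it --- which also neutralizes the imprecision that ``the sign of the path $v_auv_b$'' depends on that representative. Your structural eliminations (the $4_4$-vertex at a pole of $\Theta_2$, the $2_1$- and $3_2$-vertex obstructions) are the same ones the paper uses.

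The caveat is that the decisive step is exactly the one you defer. The claim that $\psi(v_a)\neq\psi(v_b)$ can always be arranged is not automatic and must be checked attachment by attachment and signature class by signature class. For instance, for $\Theta_2$ with $u$ attached at $v_1,v_3$ and a signature in which the two pole-to-pole paths through $v_1$ and $v_3$ are negative and the third is positive, there is a homomorphism of $\hat{H}$ that forces $\psi(v_1)=\psi(v_3)$ (namely the one sending $v_5$ to a pole-image and $v_1,v_3$ to the third vertex), and you must exhibit the other homomorphism, in which $v_5$ takes the third vertex and $v_1,v_3$ can be placed independently on the two distinct pole-images. Carrying this out for every surviving attachment of $u$ to $\Theta_2$, $\Theta_3$, and $X$ is where all the content of these cases lives; as written, your proposal names that verification but does not perform it, so the proof is incomplete there even though the route does go through.
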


\begin{proof}
We can easily compute that $\rho(\Theta_1)=2$ and $\rho(\Theta_2)=\rho(\Theta_3)=\rho(X)=3$. Let $\hat{H}$ be a subgraph of $\hat{G}$ and $H\in \{\Theta_1, \Theta_2, \Theta_3, X\}$. 

\smallskip
Suppose that $H = \Theta_1$. By Lemma~\ref{lem: main lemma}~(\ref{case: G=H}), we conclude that $G=\Theta_1$. By Lemma~\ref{lem:theta}, it is not possible. Suppose that $H \in \{\Theta_2,\Theta_3,X\}$. By Lemma~\ref{lem: main lemma}~(\ref{case: G=H}) and (\ref{case: G=P2H}), we conclude that either $G=H$ or $G=P_2(H)$. If $H \in \{\Theta_2,\Theta_3\}$, then the former case is impossible by Lemma~\ref{lem:theta}. If $H = X$, then the former case would contradict that $\hat{G}$ is $C_3^*$-critical. Indeed, if $X$ could be signed in a way that made it $C_3^*$-critical, then by Observation \ref{obs:cycles for forbidden theta subgraph lemma} (\ref{case:3cycle}), it must have two triangles $v_1v_2v_3$ and $v_3v_4v_5$ both being positive, which is switching equivalent to the signature that all the edges of $\hat{X}$ are positive, and it is easy to see that when signed this way $\hat{X} \to C_3^*$, a contradiction.

\medskip
We are left to show that $P_2(\Theta_2)$, $P_2(\Theta_3)$, and $P_2(X)$ each have no signature which makes them $C_3^*$-critical. We consider them in order. In each case, let $v$ be the new vertex and let the remaining vertices be labeled as in Figure~\ref{fig:H not isomorphic}. Suppose for contradiction that $\sigma$ is a signature of each respective graph so that it is $C_3^*$-critical. 

\begin{itemize}
\item  For $P_2(\Theta_2)$, $v$ must be adjacent to two of $\{v_1,v_3,v_5\}$. Otherwise, the $4$-cycles $v_1v_2v_5v_4$, $v_1v_2v_3v_4$ and $v_5v_2v_3v_4$, each containing a $2$-vertex, would need to be negative by Observation~\ref{obs:cycles for forbidden theta subgraph lemma}~(\ref{case:4cycle}), but it is impossible by the handshake lemma (i.e., the number of negative facial cycles of a signed plane graph is even). Thus, without loss of generality, it must be that $vv_1,vv_3 \in E(P_2(\Theta_2))$. Again by Observation~\ref{obs:cycles for forbidden theta subgraph lemma}~(\ref{case:4cycle}), we need $P_2(\Theta_2)$ to be assigned such that every $4$-cycle with a $2$-vertex is negative and, by Lemma~\ref{lem:SwitchingEquivalent}, it determines a unique (switching-equivalent) signature $\sigma$. Note that up to switching we may assume that $v_1v_4$ and $v_2v_3$ are the only negative edges in $\sigma$. But now it is straightforward to verify that $(P_2(\Theta_2),\sigma) \to C_3^*$, a contradiction.

\item For $P_2(\Theta_3)$, $v$ must be adjacent to one of $v_3$ or $v_4$ by Lemma~\ref{lem: forbidden configs} which forbids a $2_1$-vertex in any $C_3^*$-critical signed graph. By symmetry, say $v$ is adjacent to $v_3$. If $v$ is also adjacent to $v_5$, then the underlying graph is isomorphic to a $P_2(\Theta_2)$, and this case is complete. If $v$ is also adjacent to $v_1$, then the 4-cycles  $v_1v_2v_3v$ and $v_5v_2v_3v_4$ must be negative by Observation~\ref{obs:cycles for forbidden theta subgraph lemma}~(\ref{case:4cycle}). But then by assuming that three edges of $v_1v_2v_3$ are all negative or all positive depending on the sign of the triangle, it is straightforward to see the signed graph maps to $C_3^*$. The cases for $v$ being adjacent to $v_2$ or $v_4$ are omitted because they proceed as in the previous case, where Observation~\ref{obs:cycles for forbidden theta subgraph lemma} reduces the number of signatures to be considered to one or two, and then, in a straightforward way, verifying that a homomorphism to $C_3^*$ does exist.

\item For $P_2(X)$, it must be that $v$ is adjacent to a $2$-vertex in each of the triangles $v_1v_2v_3$ and $v_3v_4v_5$ by Lemma~\ref{lem: forbidden configs} as there are no two $2$-vertices adjacent to each other in any $C_3^*$-critical signed graph.By symmetry, we may assume that $v$ is adjacent to $v_2$ and $v_5$. But this graph is isomorphic to a $P_2(\Theta_3)$ (i.e., $v$ is adjacent to $v_2$ and $v_3$ in $P_2(\Theta_3)$) and, by the previous case, it cannot be the underlying graph of a $C_3^*$-critical signed graph.
\end{itemize}
This completes the proof.
\end{proof}

\begin{corollary}\label{cor:unique triangle}
Every vertex of $\hat{G}$ is in at most one triangle.
\end{corollary}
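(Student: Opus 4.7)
The plan is to derive the corollary as a direct consequence of Lemma~\ref{lem: forbidden theta subgraphs}, specifically from the prohibition of $\Theta_1$ and $X$ as subgraphs of $\hat{G}$. I would argue by contradiction: suppose some vertex $v \in V(\hat{G})$ lies in two distinct triangles $T_1$ and $T_2$.

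The key observation is that the union $T_1 \cup T_2$ has very limited possible structure, determined entirely by $|V(T_1) \cap V(T_2)|$. Since $v \in V(T_1) \cap V(T_2)$, this intersection is either $\{v\}$ or contains one additional vertex $w$ (it cannot contain two additional vertices, since $T_1$ and $T_2$ are distinct and a triangle is determined by its three vertices). In the first case, $T_1 \cup T_2$ is the bowtie graph, whose underlying graph is precisely $X$ (with $v$ playing the role of $v_3$ in Figure~\ref{fig:X}). In the second case, $T_1 = vwa$ and $T_2 = vwb$ for some distinct vertices $a,b$, so $T_1 \cup T_2$ is isomorphic to $K_4$ minus the edge $ab$, which is precisely $\Theta_1$ (see Figure~\ref{fig:Theta1}).

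In both cases, $\hat{G}$ contains a subgraph whose underlying graph is in $\{\Theta_1, X\}$, contradicting Lemma~\ref{lem: forbidden theta subgraphs}. Hence no vertex of $\hat{G}$ can lie in two triangles. There is no significant obstacle here; the corollary is essentially a bookkeeping consequence of the forbidden-subgraph list, and the only care required is to verify that the two cases of triangle-intersection genuinely exhaust all configurations and correctly match the forbidden graphs $\Theta_1$ and $X$.
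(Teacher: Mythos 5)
Your proof is correct and follows essentially the same route as the paper: both reduce to the two cases of the triangles sharing one vertex (giving $X$) or one edge (giving $\Theta_1$) and invoke Lemma~\ref{lem: forbidden theta subgraphs}. The only point worth making explicit, which the paper does, is that the case analysis is exhaustive because $\hat{G}$ has no parallel edges (Lemma~\ref{lem: forbidden edge}), so two distinct triangles cannot share all three vertices or two edges.
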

\begin{proof}
If a vertex is contained in two triangles, then, as a $C_3^*$-critical graph has no parallel edges or digon, those two triangles must either (1) share exactly one vertex, or (2) share exactly one edge. But both contradict Lemma~\ref{lem: forbidden theta subgraphs}.
\end{proof}

\begin{lemma}\label{lem: 31 in triangle}
Let $v$ be a $3_1$-vertex of $\hat{G}$ and $u$ be its $2$-neighbor. Assume that $x$ and $y$ are the $3^+$-neighbors of $v$. Then the path $xvy$ must be in a positive triangle. 
\end{lemma}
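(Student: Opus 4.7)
I argue by contradiction. Assume the path $xvy$ is not in a positive triangle, so $xy$ is either absent from $\hat{G}$ or is a negative edge. Let $\hat{H} = \hat{G} - \{v,u\}$; by the criticality of $\hat{G}$ there exists a homomorphism $\psi : \hat{H} \to C_3^*$, and my goal is to extend $\psi$ to a homomorphism $\hat{G}\to C_3^*$, contradicting criticality. The extension only has to color the two new vertices $v$ and $u$, which reduces to applying Observation~\ref{obs:PathExtension} to the length-two path $vuu'$ together with the edge constraints at $vx$ and $vy$.

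\textbf{The easy cases.} Let $A$ be the sign of the path $xvy$ and $B$ the sign of the path $xvuu'$; both are invariant under switching at $\{v,u\}$. A routine case analysis (choosing the best switching in each switching class and applying Observation~\ref{obs:PathExtension}) shows that the extension succeeds (i) unconditionally when $A=B=+$; (ii) iff $\{\psi(x),\psi(y),\psi(u')\}\neq V(C_3^*)$ when $A=+$ and $B=-$; and (iii) iff $\psi(x)\neq \psi(y)$ when $A=-$. Case (i) immediately contradicts criticality. In case (iii), I work with the proper subgraph $\hat{G}-u$: since $v$ is a $2$-vertex of $\hat{G}-u$ lying on the negative path $xvy$, every homomorphism $\varphi : \hat{G}-u\to C_3^*$ satisfies $\varphi(x)\neq \varphi(y)$, and setting $\psi := \varphi|_{\hat{H}}$ does the job. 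In case (ii) with $xy$ a negative edge of $\hat{G}$, the edge forces $\psi(x)=\psi(y)$ for every $\psi$, so the triple is never pairwise distinct and we are done.

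\textbf{The hard case (the main obstacle).} Only the case $A=+$, $B=-$, $xy\notin E(\hat{G})$ remains. Suppose, for contradiction, that every homomorphism $\psi : \hat{H}\to C_3^*$ has $\psi(x),\psi(y),\psi(u')$ pairwise distinct. Let $\hat{H}'$ be obtained from $\hat{H}$ by adding a new negative edge between $x$ and $y$; a homomorphism $\hat{H}'\to C_3^*$ is precisely a homomorphism $\psi : \hat{H}\to C_3^*$ with $\psi(x)=\psi(y)$, so $\hat{H}'\not\to C_3^*$, and therefore $\hat{H}'$ contains a $C_3^*$-critical subgraph $\hat{G}_2'$. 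If the added edge $xy$ does not belong to $\hat{G}_2'$, then $\hat{G}_2' \subsetneq \hat{G}$, the minimality of $\hat{G}$ forces $\rho(\hat{G}_2')\leq 1$, while Lemma~\ref{lem: main lemma}(iv) forces $\rho(\hat{G}_2')\geq 4$, a contradiction. If instead $xy \in E(\hat{G}_2')$, I swap this negative edge for the positive $2$-path $xvy$ already sitting inside $\hat{G}$: let $\hat{G}_3$ be obtained from $\hat{G}_2'$ by removing $xy$ and adding the vertex $v$ together with the edges $vx$ and $vy$ inherited from $\hat{G}$. Then $\hat{G}_3 \subsetneq \hat{G}$ and a direct computation gives $\rho(\hat{G}_3) = \rho(\hat{G}_2')+1 \leq 2$, while Lemma~\ref{lem: main lemma} applied to $\hat{G}_3$---which is neither $K_1$ nor $K_3$ because $xy \notin E(\hat{G}_3)$---forces $\rho(\hat{G}_3)\geq 3$ (part~(ii) in the boundary case $\hat{G}=P_2(\hat{G}_3)$ and part~(iv) otherwise), again a contradiction. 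The principal obstacle is precisely this construction of $\hat{G}_3$: the ``virtual'' negative edge produced by the critical-subgraph extraction must be converted back into an honest positive $2$-path inside $\hat{G}$, at which point minimality of $\hat{G}$ can be played against Lemma~\ref{lem: main lemma} to yield the contradiction.
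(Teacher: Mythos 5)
Your treatment of the ``hard case'' ($xy\notin E(\hat{G})$) is essentially the paper's argument: add the edge $xy$, extract a $C_3^*$-critical subgraph that must contain it, invoke minimality to get $\rho\leq 1$, and then delete $xy$ (you also re-attach $v$, which changes the arithmetic from $\rho\leq 3$ to $\rho\leq 2$ but leads to the same contradiction with Lemma~\ref{lem: main lemma}). The problem lies upstream, in the case analysis. You treat $A$ (the sign of the path $xvy$) and $B$ as fixed attributes of $\hat{G}$, but neither is switching-invariant: switching at $x$ or $y$ flips $A$. These quantities are only meaningful relative to the signature $\sigma_\psi$ under which a \emph{particular} homomorphism $\psi$ of $\hat{H}$ is edge-sign preserving, and different choices of $\psi$ can place you in different cases of your trichotomy. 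The concrete casualty is your case~(iii): the assertion that ``every homomorphism $\varphi:\hat{G}-u\to C_3^*$ satisfies $\varphi(x)\neq\varphi(y)$ because $v$ lies on the negative path $xvy$'' is false. Under the signature associated with $\varphi$ the path $xvy$ may well be positive (e.g.\ both edges realized as negative loops with $\varphi(x)=\varphi(v)=\varphi(y)$), and nothing in the hypotheses rules out $\varphi(x)=\varphi(y)$; indeed, in your own hard case you must \emph{assume} for contradiction that all $\psi$ separate $x$ and $y$, precisely because this is not automatic. The same imprecision infects the sentence ``a homomorphism $\hat{H}'\to C_3^*$ is precisely a homomorphism $\psi$ with $\psi(x)=\psi(y)$'': the sign of the added edge relative to $\sigma_\psi$ also varies with $\psi$, so this equivalence does not hold for any single choice of sign.

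The fix is the one the paper uses: pin everything to the switching-invariant sign of the \emph{triangle} $xvy$ (actual or virtual). Give $xy$ the sign making that triangle negative; then for every $\psi$ one has either $\sigma_\psi(xy)=-$, $A_\psi=+$, $\psi(x)=\psi(y)$, or $\sigma_\psi(xy)=+$, $A_\psi=-$, $\psi(x)\neq\psi(y)$, and in both situations your own criteria (ii) and (iii) show that $\psi$ extends to $\hat{G}$. This yields the implication ``$\hat{H}+xy\to C_3^*\Rightarrow\hat{G}\to C_3^*$'' that both disposes of the genuine-negative-triangle case and justifies $\hat{H}'\not\to C_3^*$ in the added-edge case; your separate case~(iii) argument via $\hat{G}-u$ is then unnecessary and should be deleted. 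As written, however, that step is a genuine gap.
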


\begin{figure}[ht]
		\centering
		\begin{tikzpicture}[>=latex,
	roundnode/.style={circle, draw=black!80, inner sep=0mm, minimum size=4mm},
	squarednode/.style={rectangle, draw=red!80, inner sep=0mm, minimum size=4mm},
	dotnode/.style={circle, draw=black!80, inner sep=0mm, minimum size=2mm},
	scale=1] 
		\node [roundnode] (u) at (2,1){$u$};
		\node [roundnode] (v) at (3,1){$v$};
		\node [squarednode] (w) at (1,1){$w$};
		\node [squarednode](y) at (4,0.5){$y$};
		\node [squarednode](x) at (4,1.5){$x$};
		\draw [line width =0.4mm, gray] (u)--(w);
		\draw [line width =0.4mm, gray, -] (u)--(v);
		\draw [line width =0.4mm, gray, -] (v)--(x);
		\draw [line width =0.4mm, gray, -] (v)--(y);
		\end{tikzpicture}
		\caption{A $3_1$-vertex with its neighbors.}
		\label{8FC3_1-3}
\end{figure}
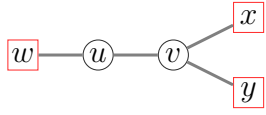

\begin{proof}
Let $w$ be the other neighbor of $u$ which is not $v$. See Figure~\ref{8FC3_1-3}. Suppose, for a contradiction, that either the path $xvy$ is in a negative triangle or $xy \not\in E(\hat{G})$. We consider these two possibilities:

\begin{itemize}

\item Suppose that $xvy$ is a negative triangle. Let $\hat{G}_1 = \hat{G} - \{u,v\}$. It follows from the criticality of $\hat{G}$ that there is a homomorphism $\psi: \hat{G}_1 \rightarrow C_3^*$. Let $\sigma$ be a signature of $\hat{G}$ such that $\psi: (\hat{G}_1, \sigma |_{G_1}) \rightarrow C_3^*$ is an edge-sign preserving homomorphism with respect to $\sigma$. We will arrive at a contradiction by showing that $\psi$ can be extended to $\hat{G}$. By possibly switching on $u$ and $v$, we may assume $\sigma(uv) = \sigma(uw) = +$.

In the edge-sign preserving homomorphism $\psi:\hat{G}_1 \rightarrow C_3^*$, based on the sign of the edge $xy$, to determine $\psi(v)$ we have two cases to consider: (1) If $\sigma(xy) = -$, then $\psi(x) = \psi(y)$. Because $xvy$ is a negative triangle, it must be that $\sigma(xv) = \sigma(yv)$. If both are negative, then set $\psi(v)= \psi(x)$, otherwise choose $\psi(v)\in V(C^*_3)\setminus \{\psi(x)\}$. (2) If $\sigma(xy) = +$, then $\psi(x) \neq \psi(y)$ and $\sigma(xv) \neq \sigma(yv)$. Suppose without loss of generality that $\sigma(xv) = -$ and then we set $\psi(v) = \psi(x)$. Now we determine $\psi(u)$. Since $\sigma(uv) = \sigma(uw) = +$ we may extend $\psi$ by setting $\psi(u)\in V(C^*_3)\setminus \{\psi(v), \psi(w)\}$. Therefore, $\hat{G} \rightarrow C_3^*$, a contradiction.

\item Suppose that $xy \not\in E(\hat{G})$. Let $\hat{G}_1 = \hat{G} - \{u,v\} + xy$ and assign a sign to $xy$ such that $xvy$ is a negative triangle in $\hat{G}+xy$. By the above reasoning, if $\hat{G}_1 \rightarrow C_3^*$, then also $\hat{G} \rightarrow C_3^*$, a contradiction. Thus $\hat{G}_1 \not\rightarrow C_3^*$, and so there exists $\hat{G}_2 \subseteq \hat{G}_1$ that is $C_3^*$-critical. Observe that in constructing $\hat{G}_2$ we do not create any digon. Clearly, $xy \in E(\hat{G}_2)$ because otherwise $\hat{G}_2 \subsetneq \hat{G}$ but both are $C_3^*$-critical, a contradiction. Since $v(\hat{G}_2) < v(\hat{G})-1$, by the choice of $\hat{G}$, it follows that $\rho(\hat{G}_2) \leq 1$. We then define $\hat{G}_3 = \hat{G}_2 - xy$. Note that $\hat{G}_3 \subsetneq \hat{G}$ and $\rho(\hat{G}_3) = \rho(\hat{G}_2) + 2 \leq 3$. Since $v(\hat{G}_3)\leq v(\hat{G})-2$, it follows that $\hat{G}\neq \hat{G}_3$ and $\hat{G}\neq P_2(\hat{G}_3)$. As $xy \not\in E(\hat{G}_3)$, $\hat{G}_3\not\in \{K_1, K_3\}$. The existence of $\hat{G}_3$ is a contradiction to Lemma~\ref{lem: main lemma}.
\end{itemize}
This completes the proof. 
\end{proof}

\begin{lemma}\label{lem: forbidden triangles with 31}
$\hat{G}$ contains no triangle $T$ of the following type.
\begin{enumerate}[(i)]
    \item\label{case: 31-31-4} two $3_1$-vertices and a $4$-vertex;
    \item\label{case: 31-30-30} a $3_1$-vertex and two $3$-vertices.
\end{enumerate}
\end{lemma}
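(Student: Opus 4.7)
The plan is a deletion-extension argument in both cases, leveraging Lemma~\ref{lem: 31 in triangle} to conclude that any triangle containing a $3_1$-vertex is positive, so after an appropriate switching we may assume the triangle $T$ is all-positive.

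For case (i), let $T = v_1v_2v_3$ with $v_1, v_2$ the $3_1$-vertices, $u_1, u_2$ their $2$-neighbors, $w_1, w_2$ the corresponding distance-two neighbors, and $z_1, z_2$ the other two neighbors of $v_3$. I would first use Lemma~\ref{lem: forbidden theta subgraphs} to rule out any coincidence $w_i = z_j$, since the triangle $T$ together with the $4$-cycle $v_i u_i w_i v_3$ would then form a copy of $\Theta_3$; in particular, $v_3w_1 \notin E(\hat{G})$. Now delete $D = \{v_1,v_2,u_1,u_2\}$ and apply criticality to obtain a homomorphism $\psi\colon \hat{G}-D \to C_3^*$. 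By switching at the vertices of $D$, arrange that $T$ is all-positive and each $v_iu_i$ is positive. Then $\{\psi(v_1),\psi(v_2)\} = V(C_3^*) \setminus \{\psi(v_3)\}$ with two possible orderings, and by Observation~\ref{obs:PathExtension} the only residual constraint is that $\psi(v_i) \neq \psi(w_i)$ whenever the path $v_iu_iw_i$ is negative. A direct case check shows at least one of the two orderings works, except in the bad configuration where both paths $v_iu_iw_i$ are negative and $\psi(w_1) = \psi(w_2) \neq \psi(v_3)$.

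To dispatch this bad configuration, I would form $\hat{G}_1 = (\hat{G}-D) + e$, where $e$ is a new negative edge $v_3w_1$; this addition creates no digon by the previous check. If $\hat{G}_1 \to C_3^*$, any such homomorphism satisfies $\psi(v_3) = \psi(w_1)$, contradicting the bad configuration. Otherwise $\hat{G}_1$ contains a $C_3^*$-critical subgraph $\hat{K}$ that must use $e$, and by the minimality of $\hat{G}$ we have $\rho(\hat{K}) \leq 1$, whence $\rho(\hat{K}-e) \leq 3$. Since $\hat{K}-e \subsetneq \hat{G}$ with $v(\hat{K}-e) \leq v(\hat{G})-4$, Lemma~\ref{lem: main lemma} forces $\hat{K}-e \in \{K_1, K_3\}$; the first is impossible because $v_3 \neq w_1$, and the second introduces a parallel edge to $v_3 w_1$ in $\hat{K}$, contradicting Lemma~\ref{lem: forbidden edge}.

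Case (ii) proceeds analogously. Let $T = vv_2v_3$ with $v$ the $3_1$-vertex, $u$ its $2$-neighbor, $w$ the distance-two neighbor, and $x_2, x_3$ the third neighbors of $v_2, v_3$. Using Corollary~\ref{cor:unique triangle} and Lemma~\ref{lem: forbidden theta subgraphs} one shows $x_2 \neq x_3$ and $x_2x_3 \notin E(\hat{G})$. Delete $\{v,v_2,v_3,u\}$, switch so that $T$ is all-positive and $vu$ positive, and apply Lemma~\ref{lem:triangle} to the induced constraint system on the triangle-coloring of $v, v_2, v_3$. The remaining switching freedom on the deleted set lets one simultaneously flip the signs of $\sigma(uw), \sigma(v_2x_2), \sigma(v_3x_3)$, partitioning the eight sign patterns into four switching-classes; in each class at least one representative satisfies the hypothesis of Lemma~\ref{lem:triangle} (one of its parts (1)--(4)), except the class of $(-,+,+) \sim (+,-,-)$, where both representatives fail precisely when $\psi(x_2) = \psi(x_3) = \psi(w)$. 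This single remaining bad configuration is dispatched exactly as in case (i) by introducing a new positive edge $x_2x_3$ and invoking Lemma~\ref{lem: main lemma}. I expect the main obstacle to be the sign bookkeeping: correctly identifying the unique bad sub-case in each switching class, and verifying in each situation that the auxiliary edge is genuinely new in $\hat{G}$, so that Lemma~\ref{lem: main lemma} applies cleanly.
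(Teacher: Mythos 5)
Your overall architecture (delete around $T$, reduce extension to Observation~\ref{obs:PathExtension} and Lemma~\ref{lem:triangle}, isolate a single ``bad configuration'', kill it with an auxiliary edge, and finish with Lemma~\ref{lem: main lemma}) matches the paper's, and your potential computation at the end is correct. But there is a genuine gap at the crux of both cases: the claim that the auxiliary edge controls the images of its endpoints. You assert that if $\hat{G}_1=(\hat{G}-D)+e$ with $e=v_3w_1$ declared negative admits a homomorphism to $C_3^*$, then ``any such homomorphism satisfies $\psi(v_3)=\psi(w_1)$.'' This is false: a homomorphism of signed graphs preserves only the signs of closed walks, so by Proposition~\ref{prop:Homo} it is edge-sign preserving only for \emph{some} switching-equivalent signature of $\hat{G}_1$, under which $e$ may well be positive (switch at $w_1$), giving $\psi(v_3)\neq\psi(w_1)$ --- which does not contradict your bad configuration at all. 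The same objection applies to the ``new positive edge $x_2x_3$'' in case (ii). The sign you assign to a single added edge is not a homomorphism invariant; only the sign of a cycle through it is. This is exactly why the paper pins down the auxiliary edge by declaring the sign of the cycle it closes ($xv_3y$ a negative triangle in case (i), $v_2v_3yx$ a positive $4$-cycle in case (ii)) and then runs a two-way case analysis on the sign that the new edge actually takes under the signature compatible with the new homomorphism.

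Your argument is rescuable, but the rescue is precisely the bookkeeping you deferred. In your case (i), the invariant you should impose is that the $4$-cycle $v_3v_1u_1w_1$ of $\hat{G}+e$ is positive (which, under your normalization in the bad configuration, is what ``$e$ negative'' amounts to). Then for any homomorphism of $\hat{G}_1$ with compatible signature $\sigma'$, after extending $\sigma'$ to $\hat{G}+e$ and renormalizing on $D$, the positivity of that $4$-cycle forces $\mathrm{sign}(e)=\mathrm{sign}(v_1u_1w_1)$: if both are negative you get $\psi(v_3)=\psi(w_1)$ and the obstruction $\psi(v_1)\neq\psi(w_1)$ is vacuous; if both are positive the path $v_1u_1w_1$ imposes no constraint at all. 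Either way the extension succeeds, but for two different reasons, only one of which appears in your write-up. Note also that the signs of the paths $v_iu_iw_i$ themselves are not properties of $\hat{G}$ but of the signature compatible with whichever homomorphism you are currently extending (they flip if the switching set contains $w_i$), so ``both paths negative'' must be re-derived for the new homomorphism rather than carried over from the old one. The analogous repair is needed in case (ii), where the relevant invariant is the sign of the $4$-cycle $v_2v_3x_3x_2$. Everything else --- the $\Theta_3$/$\Theta_1$ exclusions guaranteeing the auxiliary edge is new, the classification of the failing sign classes via Lemma~\ref{lem:triangle}, and the final application of Lemma~\ref{lem: main lemma} with the $K_1/K_3$ alternatives ruled out --- is sound.
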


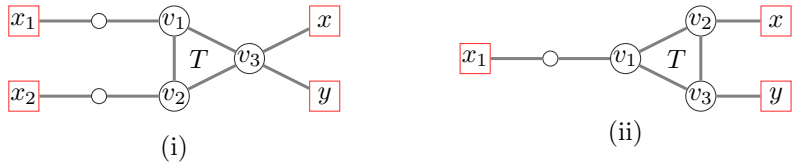
\begin{figure}[ht]
    \centering
	\begin{tikzpicture}[>=latex,
	roundnode/.style={circle, draw=black!80, inner sep=0mm, minimum size=4mm},
	squarednode/.style={rectangle, draw=red!80, inner sep=0mm, minimum size=4mm},
	dotnode/.style={circle, draw=black!80, inner sep=0mm, minimum size=2mm},
	scale=1] 
	
	\begin{scope}[xshift=0cm, yshift = 0cm]
	\node [squarednode] (x) at (4, 1.5){\footnotesize $x$};
	\node [squarednode] (y) at (4, 0.5){\footnotesize $y$};
	\node [roundnode] (v) at (3, 1){\footnotesize $v_3$};
	\node [roundnode] (v1) at (2, 1.5){\footnotesize $v_1$};
	\node [dotnode] (v11) at (1, 1.5){};
	\node [squarednode] (v1') at (0, 1.5){\footnotesize $x_1$};
	\node [roundnode] (v2) at (2, 0.5){\footnotesize $v_2$};
	\node [dotnode] (v22) at (1, 0.5){};
	\node [squarednode] (v2') at (0, 0.5){\footnotesize $x_2$};
	\draw [line width =0.4mm, gray, -] (v2')--(v22)--(v2)--(v)--(x); 
	\draw [line width =0.4mm, gray, -] (v1')--(v11)--(v1)--(v)--(y);
	\draw [line width =0.4mm, gray, -] (v1)--(v2);
	\node at (2,-0.2) {\footnotesize (\ref{case: 31-31-4})};
	\node at (2.33,1) {\footnotesize $T$};
	\end{scope}
	
	\begin{scope}[xshift=6cm, yshift = 0cm]
	\node [squarednode] (x) at (4, 1.5){\footnotesize $x$};
	\node [squarednode] (y) at (4, 0.5){\footnotesize $y$};
	\node [roundnode] (v) at (3, 1.5){\footnotesize $v_2$};
	\node [roundnode] (v1) at (2, 1){\footnotesize $v_1$};
	\node [dotnode] (v11) at (1, 1){};
	\node [squarednode] (v1') at (0, 1){\footnotesize $x_1$};
	\node [roundnode] (v2) at (3, 0.5){\footnotesize $v_3$};
	\draw [line width =0.4mm, gray, -] (v2)--(v)--(x); 
	\draw [line width =0.4mm, gray, -] (v1')--(v11)--(v1)--(v);
	\draw [line width =0.4mm, gray, -] (v1)--(v2)--(y);
	\node at (2,-0) {\footnotesize (\ref{case: 31-30-30})};
	\node at (2.68,1) {\footnotesize $T$};
	\end{scope}
	
	\end{tikzpicture}
	\caption{The two cases in Lemma \ref{lem: forbidden triangles with 31}}
	\label{fig:3_1-3_1-4}
\end{figure}

\begin{proof}
We proceed by cases. In each case, suppose for contradiction that the described triangle $T$ does exist in $\hat{G}$. Let the vertices of $T$ be labeled as is Figure \ref{fig:3_1-3_1-4}. Note that as each $T$ contains a $3_1$-vertex, by Lemma~\ref{lem: 31 in triangle}, $T$ is a positive triangle. Let $\mathcal{P}$ denote the set of paths in $\hat{G}$ drawn in Figure \ref{fig:3_1-3_1-4} that join $v_i$ and one of $x_1,x_2,x,y$ in $\hat{G} - E(T)$. By Lemma~\ref{lem: forbidden theta subgraphs}, there is no edge connecting vertices $x$ and $y$ in $\hat{G}$.

\medskip
\noindent
{\bf (\ref{case: 31-31-4}).}  We add one edge $xy$ and assign it a signature such that $xv_3y$ is a negative triangle. The resulting signed graph is denoted by $\hat{G}'$, i.e., $\hat{G}'=\hat{G}+xy$. Let $\hat{H}$ denote the signed graph obtained from $\hat{G}'$ by deleting $V(T)$, and $v_1$ and $v_2$'s $2$-neighbors.

First, we claim that $\hat{H} \not\rightarrow C_3^*$. Suppose for contradiction there is a homomorphism $\psi: \hat{H} \rightarrow C_3^*$. Let $\sigma$ be a signature of $\hat{G}'$ such that $\psi$ is edge-sign preserving. As in the proof of Lemma~\ref{lem: C_3* crit forbidden triangles with 31}, we may assume the edges of $T$ are positive, and that at most two of the paths in $\mathcal{P}$ are negative under $\sigma$. Furthermore, by possibly switching on the set $V(T)$, we may assume that the two paths of length $2$ in $\mathcal{P}$ are not both negative. We consider two possibilities based on the sign of $xy$:
\begin{itemize}
\item If the edge $xy$ is positive under $\sigma$, then because $v_3yx$ is a negative triangle we may assume without loss of generality that $v_3x$ is negative and $v_3y$ is positive. Moreover, if there is another negative path in $\mathcal{P}$ (except $v_3x$), then by symmetry it is the $v_1x_1$-path. Define $S_1 = V(C_3^*) \setminus \psi(x_1)$, $S_2 = V(C_3^*)$, and $S_3=\{\psi(x)\}$. By Lemma~\ref{lem:triangle}, we can choose $\psi(v_i) \in S_i$, for $i \in \{1,2,3\}$, such that $\psi: V(\hat{H}) \cup V(\hat{T}) \to V(C_3^*)$ is an edge-sign preserving homomorphism and then by Observation~\ref{obs:PathExtension} we may extend $\psi$ to a homomorphism of $\hat{G}$ to $C^*_3$. Note that this is possible because $\psi(x) \neq \psi(y)$ since $\sigma(xy) = +$, and so the ends of the edge $v_3y$ map to different vertices of $C_3^*$. This contradicts that $\hat{G}$ is $C_3^*$-critical. 

\item If $xy$ is negative under $\sigma$, then $\sigma(v_3x) = \sigma(v_3y)$ and $\psi(x) = \psi(y)$. If $\sigma(v_3x) = -$, then $x_1v_1$-path and $x_2v_2$-path are both positive, and in this case we define $S_1 = S_2 = V(C_3^*)$ and $S_3 = \{\psi(x)\}$. If $\sigma(v_3x) = +$, then by our assumptions there is at most one negative path in $\mathcal{P}$, by symmetry say it is $x_1v_1$-path. In this case we define $S_1 = V(C_3^*) \setminus \psi(x_1)$, $S_2 = V(C_3^*)$, and $S_3 = V(C_3^*) \setminus \psi(x)$. In either case, by Lemma~\ref{lem:triangle}, we may choose $\psi(v_i) \in S_i$ such that $\psi: V(\hat{H}) \cup V(\hat{T}) \to V(C_3^*)$ is an edge-sign preserving homomorphism and then, again, by Observation~\ref{obs:PathExtension} we may extend $\psi$ to $\hat{G}$, a contradiction.
\end{itemize}
Therefore, $\hat{H} \not\to C_3^*$.

Since $\hat{H} \not\to C_3^*$, we know that $\hat{H}$ contains a $C_3^*$-critical subgraph $\hat{H}_1$. Moreover, $\hat{H}_1$ must contain the edge $xy$ because otherwise $\hat{H}_1 \subsetneq G$, contradicting that they are both $C_3^*$-critical. Noting that $v(\hat{H}_1)<v(\hat{G})-1$, by the minimality of $\hat{G}$, $\rho(\hat{H}_1)\leq 1$. Let $\hat{H}_2=\hat{H_1}-xy$ and note that $\rho(\hat{H}_2)=\rho(\hat{H}_1)+2\leq 3$. As $\hat{H}_2$ is a subgraph of $\hat{G}$, by Lemma~\ref{lem: main lemma}, either $\hat{H}_2=\hat{G}$, or $\hat{G}=P_2(\hat{H}_2)$, or $\hat{H}_2\in \{K_1, K_3\}$, but all of these are impossible. The first two cannot be because five vertices were deleted in $\hat{G}$ to form $\hat{H}$, and the last because of the deleted edge $xy$.

\medskip
\noindent
{\bf (\ref{case: 31-30-30}).} As in Case (\ref{case: 31-31-4}), add one edge $xy$ but assign it a signature so that $v_2v_3yx$ is a positive 4-cycle. Denote the resulting signed graph by $\hat{G}'$. Let $\hat{H}$ be the signed graph obtained from $\hat{G}'$ by deleting the vertices of $T$ and the 2-vertex adjacent to $v_1$.

First, we claim that $\hat{H} \not\rightarrow C_3^*$. Suppose for contradiction there is a homomorphism $\psi: \hat{H} \rightarrow C_3^*$. Let $\sigma$ be a signature of $\hat{G}'$ which admits $\psi$ as edge-sign preserving. Once again, we may assume that all the edges of $T$ are positive and that at most one of the paths in $\mathcal{P}$ is negative under $\sigma$. We consider two cases based on the sign of $xy$.
\begin{itemize}
\item If $xy$ is negative under $\sigma$, then since $v_2v_3yx$ is a positive 4-cycle we may assume without loss of generality that $v_2x$ is the only negative path in $\mathcal{P}$.
Define $S_1 = V(C_3^*)$, $S_2 = \{\psi(x_2)\}$, and $S_3 = V(C_3^*) \setminus \{\phi(y)\}$. By Lemma~\ref{lem:triangle}, we may choose $\psi(v_i) \in S_i$ such that $\psi: V(\hat{H}) \cup V(\hat{T}) \to V(C_3^*)$ is an edge-sign preserving homomorphism and then, again, by Observation~\ref{obs:PathExtension} we may extend $\psi$ to $\hat{G}$, a contradiction. 

\item If $xy$ is positive under $\sigma$, then $v_2x$ and $v_3y$ are also positive, and $\psi(x) \neq \psi(y)$. Define $S_i = V(C_3^*) \setminus \phi(v_i)$ for $i = 1,2,3$. Then $|S_i| = 2$ for each $i$, but $S_2 \cup S_3 = V(C_3^*)$. Hence by Lemma~\ref{lem:triangle} we can choose $\psi(v_i) \in S_i$ such that $\psi: V(\hat{H}) \cup V(\hat{T}) \to V(C_3^*)$ is an edge-sign preserving homomorphism and then extend $\psi$ to $\hat{G}$ by Observation~\ref{obs:PathExtension}, a contradiction.
\end{itemize}
Therefore, $\hat{H} \not\to C_3^*$.

Since $\hat{H} \not\rightarrow C_3^*$, this means $\hat{H}$ has a $C_3^*$-critical subgraph $\hat{H_1}$, which contains the edge $xy$. This leads to a contradiction in the same manner as Case~(\ref{case: 31-31-4}), and we do not repeat the details.

\medskip
This completes the proof of the lemma.
\end{proof}

\begin{lemma}\label{lem:4_2}
Let $v$ be a $4$-vertex of $\hat{G}$ with two $2$-neighbors $u$ and $w$. Suppose that $x$ and $y$ are the other neighbors of $v$. Then either $xvy$ is a positive triangle, or $xvy$ is a path in a negative $4$-cycle. 
\end{lemma}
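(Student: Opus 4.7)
I would argue by contradiction, following the strategy of Lemma~\ref{lem: 31 in triangle}. Let $u'$ (respectively $w'$) denote the neighbor of $u$ (respectively $w$) other than $v$, and suppose $xvy$ is neither a positive triangle nor a path in a negative $4$-cycle of $\hat{G}$. Setting $\hat{H} = \hat{G} - \{u,v,w\}$, the $C_3^*$-criticality of $\hat{G}$ gives a homomorphism $\psi:\hat{H} \to C_3^*$, edge-sign preserving under some signature $\sigma$ of $\hat{G}$. The overall plan is either to extend $\psi$ to $\hat{G}$ (contradicting criticality) or to modify $\hat{G}$ by adding a virtual edge and invoke Lemma~\ref{lem: main lemma} together with the minimality of $\hat{G}$. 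Up to switching at $u,v,w$, the data relevant to the extension are $\sigma(xy)$ (when $xy\in E(\hat{G})$), the sign $\pi=\sigma(vx)\sigma(vy)$ of the path $xvy$, and the signs $s(u)=\sigma(vu)\sigma(uu')$, $s(w)=\sigma(vw)\sigma(ww')$ of the two pendant $2$-paths at $v$; a $v$-switch flips $(\sigma(vx),\sigma(vy))$ jointly with $s(u)$ and $s(w)$. By Observation~\ref{obs:PathExtension}, the extension to $u$ (resp.\ to $w$) fails only if $s(u)=-$ and $\psi(v)=\psi(u')$ (resp.\ $s(w)=-$ and $\psi(v)=\psi(w')$).

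First I would treat the case $xy\in E(\hat{G})$, where the hypothesis forces $xvy$ to be a negative triangle and $\pi\sigma(xy)=-$. When $\sigma(xy)=-$ one has $\psi(x)=\psi(y)=c$ for some color $c$; a case analysis on $(s(u),s(w))\in\{+,-\}^2$ shows that extension always succeeds: if $(s(u),s(w))=(+,+)$ any $\psi(v)\in V(C_3^*)\setminus\{c\}$ works; if $(s(u),s(w))=(-,-)$ a $v$-switch makes both pendants positive and forces the valid choice $\psi(v)=c$; the mixed cases are handled by choosing the switching that cancels the lone negative pendant constraint. Hence $\psi$ extends and $\hat{G}\to C_3^*$, a contradiction. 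When $\sigma(xy)=+$, instead $\psi(x)\neq\psi(y)$ and the $v$-switch forces $\psi(v)\in\{\psi(x),\psi(y)\}$; an analogous analysis shows extension succeeds unless, up to swapping $u\leftrightarrow w$, $(s(u),s(w))=(+,-)$ together with $\psi(x)=\psi(u')$ and $\psi(y)=\psi(w')$. This residual obstruction is dispatched by adding to $\hat{H}$ a virtual negative edge $u'y$, which is genuinely absent from $\hat{G}$ since its presence combined with the forced color coincidences would yield a negative $4$-cycle through $xvy$, violating the standing hypothesis. Running the second-case argument of Lemma~\ref{lem: 31 in triangle}, the resulting auxiliary signed graph cannot map to $C_3^*$, so it contains a $C_3^*$-critical subgraph $\hat{G}_2$ with $u'y\in E(\hat{G}_2)$; the minimality of $\hat{G}$ gives $\rho(\hat{G}_2)\le 1$, whence $\hat{G}_2-u'y\subsetneq\hat{G}$ has potential at most $3$ and too few vertices, violating Lemma~\ref{lem: main lemma}.

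The case $xy\notin E(\hat{G})$ is handled by the same machinery, with the virtual edge taken to be $xy$ itself, signed so that $xvy$ becomes a negative triangle in $\hat{G}+xy$: set $\hat{G}_1=\hat{H}+xy$, verify $\hat{G}_1\not\to C_3^*$ via the extension-plus-residual-edge analysis above, extract a $C_3^*$-critical subgraph $\hat{G}_2\subseteq\hat{G}_1$ containing $xy$, and conclude as before through minimality and Lemma~\ref{lem: main lemma}. The chief obstacle is the residual configuration in the $\sigma(xy)=+$ branch of the triangle case, where the direct extension of $\psi$ is blocked by color coincidences; here the no-negative-$4$-cycle hypothesis plays its essential role, ensuring that the virtual edge $u'y$ used to finish the argument is not already present in $\hat{G}$.
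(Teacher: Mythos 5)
Your overall strategy (delete $\{u,v,w\}$, try to extend a homomorphism of the remainder, and fall back on a virtual edge plus Lemma~\ref{lem: main lemma}) is not the paper's, and the place where it departs is exactly where it breaks. The extension analysis is fine up to the ``residual obstruction'' in the branch where $xvy$ is a negative triangle with $\sigma(xy)=+$, but your way of dispatching that obstruction has two concrete problems. (1) Your justification that $u'y\notin E(\hat G)$ is wrong: the cycle that the edge $u'y$ would create together with the forced identifications is $u'uvy$, which contains $v$ but does \emph{not} contain the path $xvy$, so the standing hypothesis (no negative $4$-cycle through $xvy$) says nothing about it; nothing in the setup prevents $u'y$ from being an edge of $\hat G$, and if it is, the virtual-edge step cannot be performed. (2) Even when $u'y$ is absent, the obstruction is described by data ($s(u)\neq s(w)$ together with $\psi(u')=\psi(x)$, $\psi(w')=\psi(y)$) that is \emph{not} invariant: it is computed relative to a signature compatible with one particular homomorphism $\psi$ of $\hat H$, and a homomorphism of the auxiliary graph $\hat H+(u'y)^-$ comes with its own compatible signature, obtained from $\sigma$ by switching at a subset of $V(\hat H)$ that may include $u'$ or $w'$ and hence flip $s(u)$ or $s(w)$. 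So a single fixed signed virtual edge need not kill the obstruction for every homomorphism of the auxiliary graph, which is what the claim $\hat G_1\not\to C_3^*$ requires. Moreover, in the branch $xy\notin E(\hat G)$ you may need \emph{both} virtual edges $xy$ and $u'y$; removing two virtual edges from the extracted critical subgraph gives potential at most $1+2+2=5$, which no longer contradicts Lemma~\ref{lem: main lemma}.

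The paper avoids all of this with a different reduction: after normalizing so that $\sigma(vx)=\sigma(vy)$ (and $\sigma(xy)=-$ if that edge exists), it \emph{identifies} $x$ and $y$ into a single vertex $z$ in $\hat G$ itself. The no-negative-$4$-cycle hypothesis guarantees every common neighbor of $x$ and $y$ sees them with equal signs, so no digon is created; any homomorphism of the identified graph pulls back to $\hat G$, so the identified graph contains a $C_3^*$-critical subgraph $\hat G_2$ containing $z$ but not $v,u,w$ (a surviving $v$ would be a forbidden $3_2$- or smaller-degree vertex). Un-identifying $z$ and re-attaching $v$ with the two edges $vx,vy$ gives a proper subgraph of potential at most $\rho(\hat G_2)+6-4\le 3$ that violates Lemma~\ref{lem: main lemma}. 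This identification step is the key idea your proposal is missing; it removes the need for any extension case analysis at $v$ and for virtual edges altogether.
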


\begin{figure}[ht]
    \centering
	\begin{tikzpicture}[>=latex,
	roundnode/.style={circle, draw=black!80, inner sep=0mm, minimum size=4mm},
	squarednode/.style={rectangle, draw=red!80, inner sep=0mm, minimum size=4mm},
	dotnode/.style={circle, draw=black!80, inner sep=0mm, minimum size=2mm},
	scale=1] 
	\node [squarednode] (x) at (4, 1.5){\footnotesize $x$};
	\node [squarednode] (y) at (4, 0.5){\footnotesize $y$};
	\node [roundnode] (v) at (3, 1){\footnotesize $v$};
	\node [roundnode] (v1) at (2, 1.5){\footnotesize $u$};
	\node [squarednode] (v1') at (1, 1.5){\footnotesize $u'$};
	\node [roundnode] (v2) at (2, 0.5){\footnotesize $w$};
	\node [squarednode] (v2') at (1, 0.5){\footnotesize $w'$};
	\draw [line width =0.4mm, gray, -] (v2')--(v2)--(v)--(x); 
	\draw [line width =0.4mm, gray, -] (v1')--(v1)--(v)--(y);
	\end{tikzpicture}
	\caption{A $4$-vertex with two $2$-neighbors.}
	\label{fig:4_2vertex}
\end{figure}
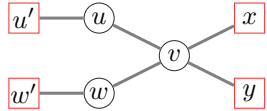

\begin{proof}
Let $u'$ and $w'$ be the neighbors of $u$ and $w$ respectively that are not $v$. See Figure~\ref{fig:4_2vertex}. Let $\sigma$ = $\sigma(\hat{G})$. Suppose for contradiction that $xvy$ is neither in a positive triangle nor a negative $4$-cycle. By possibly switching, we may assume that $\sigma(vx) = \sigma(vy)$ and $\sigma(xy)=-$ if there exists one such edge. If $v' \neq v$ is a common neighbor of $x$ and $y$, then it must be that also $\sigma(v'x) = \sigma(v'y)$ because otherwise, $xvyv'$ is a negative 4-cycle. Let $\hat{G}_1$ be the signed graph obtained from $\hat{G}$ by identifying $x$ and $y$ to a new vertex $z$ and deleting one of the parallel edges connecting $z$ and $v$. 

Since any common neighbor $v'$ of $x$ and $y$ has $\sigma(v'x) = \sigma(v'y)$, the graph $\hat{G}_1$ has no digon. Since $\sigma(xy)=-$, if such an edge exists, then $\hat{G}_1$ may have a negative loop but no positive loop. If there is a homomorphism $\psi: \hat{G}_1 \rightarrow C_3^*$, then $\psi$ can be extended to $\hat{G}$ by setting $\psi(x) = \psi(y) = \psi(z)$. This contradicts that $\hat{G}$ is $C_3^*$-critical. Thus $\hat{G}_1 \not\rightarrow C_3^*$, and hence $\hat{G}_1$ contains a $C_3^*$-critical subgraph $\hat{G}_2$. Note that $z \in V(\hat{G}_2)$ because otherwise $\hat{G}_2 \subsetneq \hat{G}$, but both are $C_3^*$-critical, a contradiction.

Now we claim that the vertex $v$ is not in $\hat{G}_2$. Otherwise, $v$ is a vertex of one of the following types: $1$-vertex, $2_1$-vertex, or $3_2$-vertex, contradicting Lemma~\ref{lem: forbidden configs} or \ref{lem: no 32 vertex}. Note that $u$ and $w$ are also not in $\hat{G}_2$ since $\hat{G}_2$ must be connected and contains no cut-edge by Lemma~\ref{lem: forbidden edge}. Clearly, $v(\hat{G}_2) < v(\hat{G}) - 1$, so $\rho(\hat{G}_2) \leq 1$ by the minimality of $\hat{G}$.

We now construct a signed graph $\hat{G}_3$ from $\hat{G}_2$ as follows: firstly, undo the identification at $z$, without putting back the negative edge $xy$ if such an edge exists; secondly, add the vertex $v$ and edges $vx$ and $vy$. Note that $\hat{G}_3$ is a proper subgraph of $\hat{G}$ and $\rho(\hat{G}_3) = \rho (\hat{G}_2) + 6 -4 \leq 3$. Since $\hat{G}_3 \subsetneq \hat{G}$, by Lemma~\ref{lem: main lemma}, one of the following conditions is satisfied: (1) $\hat{G} = \hat{G}_3$, (2) $\hat{G} = P_2(\hat{G}_3)$, (3) ${G}_3 = K_1$, or (4) ${G}_3 = K_3$. But (1) and (2) are not possible because $u,w \in V(\hat{G}) \setminus V(\hat{G}_3)$, and (3) and (4) are not satisfied because $x,y \in V(\hat{G}_3)$ but are not adjacent. This contradiction completes the proof.
\end{proof}

The next lemma shows us that every $4_3$-vertex of $\hat{G}$ is in exactly three $4$-cycles, all of which share exactly one common edge.

\begin{lemma}\label{lem:43 in 4cycles}
Let $v$ be a $4_3$-vertex of $\hat{G}$, and let $w$ be its neighbor which is not of degree $2$. Then the distance-two neighbors of $v$ are distinct, and $w$ is adjacent to each of them.
\end{lemma}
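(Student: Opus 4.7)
The plan is to derive a contradiction from the negation of each conclusion by extending a homomorphism $\varphi: \hat{G}^* \to C_3^*$, where $\hat{G}^* := \hat{G} - \{v, u_1, u_2, u_3\}$, to all of $\hat{G}$ (contradicting $\hat{G} \not\to C_3^*$). Let $u_1, u_2, u_3$ be the three $2$-neighbors of $v$ and let $x_i$ be the other neighbor of $u_i$. By criticality, such $\varphi$ exists. After switching at each $u_i$ to normalize $\sigma(vu_i) = +$, the relevant sign data reduces to the four ``emanation signs'' $\sigma(vw)$ and $\sigma(u_ix_i)$ for $i=1,2,3$; these are invariant under all remaining switchings on the deleted vertices except for a global flip at $v$. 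A direct check using Observation~\ref{obs:PathExtension} shows that $\varphi$ extends whenever, after an optimal flip, at most one of the four emanations is negative, so the only obstruction to extension arises in the two-negative case, and then only when both flip representations simultaneously yield impossible constraints on $\varphi$.

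For the distinctness claim, suppose $x_i = x_j =: x$ for some $i \neq j$. The $4$-cycle $vu_ixu_j$ contains two $2$-vertices and hence is negative by Observation~\ref{obs:cycles for forbidden theta subgraph lemma}(i), forcing $\sigma(u_ix)$ and $\sigma(u_jx)$ to be opposite. The case $x = w$ produces a $\Theta_1$ subgraph on $\{v, u_i, u_j, w\}$, contradicting Lemma~\ref{lem: forbidden theta subgraphs}, and a triple coincidence $x_1 = x_2 = x_3$ is impossible since three emanation signs cannot be pairwise opposite. When the remaining index $k$ satisfies $x_k = w$, the positive triangle $vu_kw$ (by Observation~\ref{obs:cycles for forbidden theta subgraph lemma}(ii)) imposes $\sigma(vw) = \sigma(u_kw)$ and reduces the total negative emanation count to at most one, giving direct extension. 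In the remaining sub-case ($x, x_k \neq w$), the two flip representations impose complementary failure conditions of the form ``$\varphi(w) = \varphi(x)$'' and ``$\{\varphi(w), \varphi(x), \varphi(x_k)\} = V(C_3^*)$''; since $\varphi(w) = \varphi(x)$ immediately forces $|\{\varphi(w), \varphi(x), \varphi(x_k)\}| \leq 2$, these cannot both hold, so at least one representation allows extension, a contradiction.

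For the adjacency claim, suppose $wx_1 \notin E(\hat{G})$. If $x_1 = w$ (noting that no loop exists at $w$ by Lemma~\ref{lem: forbidden edge}), then the triangle $vu_1w$ is positive, giving $\sigma(vw) = \sigma(u_1w)$, and the flipped representation collapses to the self-contradictory constraint $\varphi(v) = \varphi(w) = \varphi(x_1) \neq \varphi(v)$, so the only surviving failure mode is $\{\varphi(w), \varphi(x_2), \varphi(x_3)\} = V(C_3^*)$. We preclude this by adjoining a negative edge among $\{wx_2, wx_3, x_2x_3\}$ to $\hat{G}^*$; the only case in which no such adjunction is available (without creating a digon) is when all three edges already exist positively in $\hat{G}$, in which case $wx_2x_3$ is a positive triangle and together with the positive triangle $vu_1w$ sharing vertex $w$ realizes the forbidden subgraph $X$, again contradicting Lemma~\ref{lem: forbidden theta subgraphs}. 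If instead $x_1 \neq w$, we adjoin the edge $wx_1$ with a sign chosen according to the specific two-negative emanation configuration of $\hat{G}$---either forcing $\varphi(w) \neq \varphi(x_1)$ or $\varphi(w) = \varphi(x_1)$---to break the corresponding failure condition. In both adjunction scenarios, a no-critical-subgraph argument patterned on the proof of Lemma~\ref{lem: forbidden triangles with 31}, using the minimality of $\hat{G}$ and Lemma~\ref{lem: main lemma} to exclude all possibilities for a $C_3^*$-critical subgraph of the augmented graph, shows that the augmented graph admits a homomorphism to $C_3^*$ that restricts to a $\varphi$ extending to $\hat{G}$. The main obstacle is the careful bookkeeping over the two-negative emanation configurations to determine the correct sign for the adjoined edge; the core technical engine is the edge-addition plus potential argument combined with the forbidden configurations $\Theta_1$ and $X$.
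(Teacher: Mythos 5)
Your proof takes a completely different and far longer route than the paper's, which obtains the whole lemma in a few lines: Lemma~\ref{lem:4_2}, applied to $v$ with two of its $2$-neighbors playing the roles of $u$ and $w$ there, immediately gives that for each $i$ either $w=x_i$ or $wx_i\in E(\hat{G})$, and the cases $w=x_i$ and $x_i=x_j$ are then eliminated by the forbidden subgraphs $\Theta_1,\Theta_2,\Theta_3$ of Lemma~\ref{lem: forbidden theta subgraphs}. Your ``emanation sign'' analysis essentially re-proves the relevant special case of Lemma~\ref{lem:4_2} from scratch. The distinctness half of your argument is correct as written: the normalization, the mutual exclusivity of the two flip-representation failure conditions, and the $\Theta_1$ and $X$ reductions all check out.

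The adjacency half has a genuine gap. You choose the sign of the adjoined edge $wx_1$ ``according to the specific two-negative emanation configuration of $\hat{G}$,'' but this configuration is not an attribute of $\hat{G}$: it depends on the homomorphism $\varphi$ of $\hat{G}^*$, that is, on which switching-equivalent signature makes $\varphi$ edge-sign preserving, and only the signs of closed walks are switching-invariant. Different homomorphisms of the augmented graph $\hat{G}^*+wx_1$ can induce different emanation configurations, so an edge sign tailored to one configuration need not break the failure condition arising from another; the extension must be verified for \emph{every} homomorphism of the augmented graph. The repair is to specify the sign by a switching-invariant condition---require the $4$-cycle $wvu_1x_1$ to be positive---and then run the case analysis over all configurations; one can check that your four case-wise choices all coincide with exactly this condition, so the argument does go through, but the proposal neither states the invariant condition nor performs that verification. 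The same issue affects the $x_1=w$ branch: ``adjoining a negative edge'' among $\{wx_2,wx_3,x_2x_3\}$ only forces equality of images if its sign is pinned down by a cycle through $v$ (for instance, making the $5$-cycle $x_2u_2vu_3x_3$ negative), and the trichotomy ``already negative / already positive / absent'' is likewise not switching-invariant; the correct dichotomy is simply adjacent versus non-adjacent, with the all-adjacent case yielding the forbidden subgraph $X$.
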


\begin{figure}[ht]
    \centering
	\begin{tikzpicture}[>=latex,
	roundnode/.style={circle, draw=black!80, inner sep=0mm, minimum size=4mm},
	squarednode/.style={rectangle, draw=red!80, inner sep=0mm, minimum size=4mm},
	dotnode/.style={circle, draw=black!80, inner sep=0mm, minimum size=2mm},
	scale=1] 
	
	\begin{scope}[xshift=0cm, yshift = 0cm]
	\node [squarednode] (x) at (5, 1.6){\footnotesize $x_1$};
	\node [squarednode] (x3) at (5, 1){\footnotesize $x_2$};
	\node [squarednode] (x4) at (5, 0.4){\footnotesize $x_3$};
	\node [roundnode] (v32) at (4, 1.6){\footnotesize $v_1$};
	\node [roundnode] (v33) at (4, 1){\footnotesize $v_2$};
	\node [roundnode] (v34) at (4, 0.4){\footnotesize $v_3$};
	\node [roundnode] (v) at (3, 1){\footnotesize $v$};
	\node [squarednode] (w) at (2, 1){\footnotesize $w$};
	\draw [line width =0.4mm, gray, -] (w)--(v)--(v32)--(x); 
	\draw [line width =0.4mm, gray, -] (x3)--(v33)--(v)--(v34)--(x4);
	\end{scope}
	
	\node (ghost1) at (6,1){$\to$};
	
	\begin{scope}[xshift=5cm, yshift = 0cm]
	\node [squarednode] (x1) at (5, 1.6){\footnotesize $x_1$};
	\node [squarednode] (x2) at (5, 1){\footnotesize $x_2$};
	\node [squarednode] (x3) at (5, 0.4){\footnotesize $x_3$};
	\node [roundnode] (v32) at (4, 1.6){\footnotesize $v_1$};
	\node [roundnode] (v33) at (4, 1){\footnotesize $v_2$};
	\node [roundnode] (v34) at (4, 0.4){\footnotesize $v_3$};
	\node [roundnode] (v) at (3, 1){\footnotesize $v$};
	\node [squarednode] (w) at (2, 1){\footnotesize $w$};
	\draw [line width =0.4mm, gray, -] (w)--(v)--(v32)--(x1); 
	\draw [line width =0.4mm, gray, -] (x2)--(v33)--(v)--(v34)--(x3);
	\draw [line width =0.4mm, gray, -] (x2) to[out=160,in=20] (w) to[out=60,in=160] (x1);
	\draw [line width =0.4mm, gray, -] (w) to[out=-60,in=-160] (x3);
	\end{scope}
	
	\end{tikzpicture}
	\caption{A $4_3$-vertex and its neighborhood.}
	\label{fig:4_3}
\end{figure}
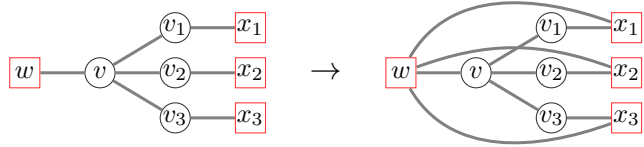

\begin{proof}
Let $v \in V(\hat{G})$ be a $4_3$-vertex whose neighborhood is labeled as in Figure~\ref{fig:4_3}. Since $\hat{G}$ contains no copy of $\Theta_2$, by Lemma~\ref{lem: forbidden theta subgraphs}, $x_1,x_2,$ and $x_3$ are not identified. By Lemma~\ref{lem:4_2}, for each $i$, either $w=x_i$ or $wx_i$ is an edge. First, we show that $w \neq x_i$ for any $i$. If there are distinct $i$ and $j$ such that $w=x_i=x_j$, then $\{w, v, v_i, v_j\}$ induces a copy of $\Theta_1$, contradicting Lemma~\ref{lem: forbidden theta subgraphs}. 
If there are $i$ and $j$ such that $w=x_i$ and $wx_j$ is an edge, then $\{w, v, v_i, v_j, x_j\}$ induces a copy of $\Theta_3$, again contradicting Lemma~\ref{lem: forbidden theta subgraphs}. Therefore, $w \neq x_i$ for any $i$, which means $wx_1, wx_2$ and $wx_3 \in E(\hat{G})$. 
Finally, if there exist distinct $i,j$ such that $x_i = x_j$, then $\{w,v,v_i,v_j,x_i\}$ induces a copy of $\Theta_2$, a contradiction.
\end{proof}

\begin{lemma}\label{lem:$0-43-43$}
\begin{enumerate}[(1)]
    \item \label{43_0} No $5_{\ge2}$-vertex is adjacent to a $4_3$-vertex in $\hat{G}$.
    \item\label{43_1} No $4_0$-vertex is adjacent to two $4_3$-vertices in $\hat{G}$.
    \item\label{43_2} No $5$-vertex is adjacent to four $4_3$-vertices in $\hat{G}$.
\end{enumerate}
\end{lemma}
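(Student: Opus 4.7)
My plan is to exploit the rigid structure imposed by Lemma~\ref{lem:43 in 4cycles}: the non-$2$-neighbor $w$ of a $4_3$-vertex $v$ is adjacent to $v$ together with three distinct distance-two neighbors $x_1, x_2, x_3$ of $v$, and none of $v, x_1, x_2, x_3$ is a $2$-vertex (the latter since Lemma~\ref{lem: forbidden configs} forbids adjacent pairs of $2$-vertices). Each of the three parts will be reduced to a potential count on a carefully chosen subgraph $\hat{H} \subseteq \hat{G}$, to which Lemma~\ref{lem: main lemma} is then applied.

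For part~(\ref{43_0}), a $5$-vertex $w$ adjacent to a $4_3$-vertex $v$ must be $v$'s non-$2$-neighbor, so four of $w$'s five neighbors have degree at least $3$ by the observation above; hence $w$ has at most one $2$-neighbor, so it is not $5_{\geq 2}$. For part~(\ref{43_1}), suppose a $4_0$-vertex $u$ is adjacent to two $4_3$-vertices $v$ and $v'$, and let $a, b$ be the remaining two neighbors of $u$. Both $v$ and $v'$ take $u$ as their non-$2$-neighbor, and Lemma~\ref{lem:43 in 4cycles} forces the three distance-two neighbors of $v$ (respectively $v'$) to be exactly $\{v', a, b\}$ (respectively $\{v, a, b\}$). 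This pins down a $10$-vertex, $14$-edge subgraph $\hat{H}$ of $\hat{G}$ consisting of $u, v, v', a, b$, a shared $2$-neighbor $t$ between $v$ and $v'$, and four further $2$-vertices joining $\{v, v'\}$ to $\{a, b\}$. A direct count gives $\rho(\hat{H}) = 2$, and since $\hat{H} \notin \{K_1, K_3\}$, Lemma~\ref{lem: main lemma} forces $\hat{G} = \hat{H}$. But then $a$ (and likewise $b$) is a $3_2$-vertex, contradicting Lemma~\ref{lem: no 32 vertex}.

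Part~(\ref{43_2}) uses the same recipe with more bookkeeping. Let $u$ be a $5$-vertex adjacent to four $4_3$-vertices $v_1, v_2, v_3, v_4$, and let $y$ be the fifth neighbor of $u$. For each $v_i$, the three distance-two neighbors lie in $\{v_1, v_2, v_3, v_4, y\} \setminus \{v_i\}$ and miss exactly one element $m(i)$. Since sharing a $2$-neighbor between the $v_i$'s is a symmetric relation, $m(i) = v_j$ implies $m(j) = v_i$, so $I = \{i : m(i) = y\}$ has even cardinality and $|I| \in \{0, 2, 4\}$. Moreover, no two $v_i$'s share more than one $2$-neighbor, for otherwise a copy of $\Theta_2$ would arise through $u$, contradicting Lemma~\ref{lem: forbidden theta subgraphs}. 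In each case I will assemble $u, v_1, \ldots, v_4, y$ together with all shared $2$-neighbors between pairs of $v_i$'s and all $2$-vertices joining some $v_i$ to $y$ into a subgraph $\hat{H}$, obtaining $\rho(\hat{H}) = 0, 1, 2$ for $|I| = 0, 2, 4$ respectively. The first two sub-cases directly contradict Lemma~\ref{lem: main lemma}. In the third, $\rho(\hat{H}) = 2$ forces $\hat{G} = \hat{H}$ as before, and then $y$ is a $1$-vertex of $\hat{G}$ (its only edge in $\hat{H}$ being $uy$), contradicting Lemma~\ref{lem: forbidden configs}.

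The main obstacle will be the casework in part~(\ref{43_2}): verifying the involution structure on $\{1, \ldots, 4\} \setminus I$, ensuring shared $2$-neighbors are counted exactly once (which is where Lemma~\ref{lem: forbidden theta subgraphs} intervenes), and carefully tallying vertices and edges in each of the three sub-cases. Everything else follows cleanly once the configurations are written down.
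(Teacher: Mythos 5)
Your proposal is correct and follows essentially the same route as the paper: part (1) via Lemma~\ref{lem:43 in 4cycles} plus the forbidden $2_1$-vertex, part (2) via the same $10$-vertex, $14$-edge subgraph with $\rho=2$ forcing $\hat{G}=\hat{H}$ and then a forbidden $3_2$-vertex, and part (3) via the same parity split into the three sub-cases with $\rho(\hat{H})=0,1,2$ (the paper ends the last sub-case by noting $ww'$ is a cut-edge rather than that $y$ is a $1$-vertex, but these are equivalent contradictions). The only cosmetic difference is that you justify the uniqueness of shared $2$-neighbors via $\Theta_2$ where the paper reads it off the distinctness clause of Lemma~\ref{lem:43 in 4cycles}; both are valid.
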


\begin{figure}[ht]
    \centering
    \begin{minipage}[t]{.46\textwidth}
    \centering
	\begin{tikzpicture}[>=latex,
	roundnode/.style={circle, draw=black!80, inner sep=0mm, minimum size=4mm},
	squarednode/.style={rectangle, draw=red!80, inner sep=0mm, minimum size=4mm},
	dotnode/.style={circle, draw=black!80, inner sep=0mm, minimum size=2mm},
	scale=1] 
	
	\begin{scope}[xshift=0cm, yshift = 0cm]
	\node [roundnode] (w) at (0, 0){\footnotesize $w$};
	\node [roundnode] (v) at (1, 1){\footnotesize $v$};
	\node [dotnode] (1) at (1.5,.5){};
	\node [dotnode] (2) at (2.0,.5){};
	\node [squarednode] (x1) at (2, 0){\footnotesize $x_1$};
	\node [squarednode] (x2) at (3, 0){\footnotesize $x_2$};
	\node [dotnode] (3) at (1.5,-.5){};
	\node [dotnode] (4) at (2.0,-.5){};
	\node [roundnode] (x3) at (1, -1){\footnotesize $x_3$};
    \node [dotnode] (5) at (1,0){};

	\draw [line width =0.4mm, gray, -] (v)--(w)--(x3)--(3)--(x1)--(1)--(v)--(5)--(x3)--(4)--(x2)--(2)--(v); 
	\draw [line width =0.4mm, gray, -] (x1) to[out=160,in=20] (w) to[out=80,in=110, distance=1.6cm] (x2);
	\end{scope}
	
	\end{tikzpicture}
	\caption{The subgraph $H$ which arises when a $4_0$-vertex $w$ is adjacent to two $4_3$-vertices $v$ and $x_3$.}
	\label{fig:4_0}
    \end{minipage}\qquad
    \begin{minipage}[t]{.46\textwidth}
    \centering
	\begin{tikzpicture}[>=latex,
	roundnode/.style={circle, draw=black!80, inner sep=0mm, minimum size=4mm},
	squarednode/.style={rectangle, draw=red!80, inner sep=0mm, minimum size=4mm},
	dotnode/.style={circle, draw=black!80, inner sep=0mm, minimum size=2mm},
	scale=1] 
	
	\begin{scope}[xshift=0cm, yshift = 0cm]
	\node [roundnode] (w) at (0, 0){\footnotesize $w$};
	\node [roundnode] (v) at (2, 1){\footnotesize $x_1$};
	\node [dotnode] (1) at (1.5,.5){};
	\node [dotnode] (2) at (2.5,.5){};
	\node [roundnode] (x1) at (1, 0){\footnotesize $x_2$};
	\node [roundnode] (x2) at (3, 0){\footnotesize $x_3$};
	\node [dotnode] (3) at (1.5,-.5){};
	\node [dotnode] (4) at (2.5,-.5){};
	\node [roundnode] (x3) at (2, -1){\footnotesize $x_4$};
    \node [dotnode] (5) at (3.7,0){};
    \node [dotnode] (6) at (2,0){};
    \node [squarednode] (ghost) at (-1,0){\footnotesize $w'$};

    \draw [line width =0.4mm, gray, -] (ghost) -- (w);
	\draw [line width =0.4mm, gray, -] (v)--(w)--(x3)--(3)--(x1)--(1)--(v)--(5)--(x3)--(4)--(x2)--(2)--(v); 
	\draw [line width =0.4mm, gray, -] (x1) -- (w) to[out=20,in=160, distance=1.0cm] (x2) -- (6) -- (x1);
	\end{scope}
	
	\end{tikzpicture}
	\caption{The subgraph which arises when a $5$-vertex $w$ is adjacent to four $4_3$-vertices $x_1$ to $x_2$.}
	\label{fig:5_3}
	\end{minipage}
\end{figure}

\begin{proof}
(\ref{43_0}) Let $v$ be the $4_3$-vertex and its neighborhood be labeled as in Figure~\ref{fig:4_3}. Suppose for contradiction $w$ is a $5_{\ge 2}$-vertex. Then some $x_i$ must be a $2$-vertex, but this contradicts Lemma~\ref{lem: forbidden configs}, which forbids a $2_1$-vertex in $\hat{G}$.

(\ref{43_1}) Suppose for contradiction such a $4_0$-vertex exists. Let $v$ be one of its $4_3$-neighbors, and let the neighborhood of $v$ be labeled as in Figure~\ref{fig:4_3}. The $4_0$-vertex is clearly $w$, and by Lemma~\ref{lem:43 in 4cycles}, the other $4_3$-vertex must be some $x_i$, say $x_3$. Applying Lemma~\ref{lem:43 in 4cycles} to the $4_3$-vertex $x_3$, it follows that $x_1$ and $x_2$ are at distance $2$ from $x_3$, as in Figure~\ref{fig:4_0}. But then the signed graph $\hat{H}$ induced by $v,w,x_1,x_2,x_3$ and the five $2$-neighbors of $v$ and $x_3$ has $10$ vertices and $14$ edges. This means $\rho(H) =2$. By Lemma~\ref{lem: main lemma}, it must be that $\hat{H} = \hat{G}$. But this is impossible because $\hat{H}$ has a $3_2$-vertex $x_1$, contradicting Lemma~\ref{lem: no 32 vertex}.

(\ref{43_2}) Suppose for contradiction that such a $5$-vertex $w$ exists. Let $x_1,x_2,x_3,$ and $x_4$ be its $4_3$-neighbors, and let $w'$ be the remaining neighbor of $w$. By Lemma~\ref{lem:43 in 4cycles}, for each $i$, the $3$ distance-two neighbors of $x_i$ are among $\{w',x_j : j \neq i\}$. Now, by parity, $w'$ is adjacent to $0,2$, or $4$ of the $x_i$'s. If $w'$ is adjacent to $2$ (or $4$) of them, then the subgraph $\hat{H}$ induced by $\{w,w',x_1,x_2,x_3,x_4\}$ and the $2$-neighbors of each $x_i$ has $\rho(\hat{H}) = 1$ (or $0$, respectively). Both contradict Lemma~\ref{lem: main lemma}. This means each $x_i$ is pairwise connected by a path of length two, where the internal vertex of each path has degree $2$, and $w'$  is adjacent to none of $w_i$'s, as depicted in Figure~\ref{fig:5_3}. But then the edge $w'w$ is a cut-edge in $\hat{G}$, contradicting Lemma~\ref{lem: forbidden edge}.
\end{proof}

\subsection*{Discharging part}
We define a \emph{wealthy vertex} to be a vertex of one of the following types: a $4_0$-vertex, a $5_{\leq 2}$-vertex, or a $6^+$-vertex, and a \emph{rich vertex} to be a vertex of one of the following types: a $4_1$-vertex, a $5_3$-vertex, or a wealthy vertex. 

In the next lemma, we show that every $3_1$-vertex has a ``good'' neighbor.

\begin{lemma}\label{lem:31+rich or wealthy}
Every pair of adjacent $3_1$-vertices has a common wealthy neighbor and every $3_1$-vertex has a rich neighbor.
\end{lemma}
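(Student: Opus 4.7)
My plan is to leverage the triangle structure forced around every $3_1$-vertex by Lemma~\ref{lem: 31 in triangle}, and then combine the resulting degree bounds at the two other triangle corners with the forbidden-triangle configurations proved in Lemmas~\ref{lem: forbidden triangles with 31} and~\ref{lem: C_3* crit forbidden triangles with 31}. The key observation driving both parts is that if $v$ is a $3_1$-vertex sitting in a triangle $vab$, then inside that triangle each of $a$ and $b$ already has two non-$2$-neighbors (namely $v$ and the opposite triangle corner, both of degree $\ge 3$); consequently $a$ has at most $\deg(a)-2$ neighbors of degree $2$, and similarly for $b$.

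I will prove the second clause first. Let $v$ be a $3_1$-vertex with $2$-neighbor $u$ and remaining (necessarily $3^+$-) neighbors $a, b$. Lemma~\ref{lem: 31 in triangle} gives a positive triangle on $\{v,a,b\}$. Combining the definition of ``rich'' with the counting observation above, a short check shows that if $a$ is not rich then $a \in \{3_0, 3_1, 4_2\}$: degree $\ge 6$ is automatically wealthy; a degree-$5$ vertex with at most $3$ two-neighbors is rich (being either $5_{\le 2}$ or $5_3$); and a degree-$4$ vertex with at most $2$ two-neighbors is $4_0, 4_1$, or $4_2$. Assuming for contradiction that neither $a$ nor $b$ is rich, each lies in $\{3_0, 3_1, 4_2\}$, and the three possible combinations are each forbidden: two $3$-vertices by Lemma~\ref{lem: forbidden triangles with 31}(\ref{case: 31-30-30}); a $3$-vertex paired with a $4_2$-vertex by Lemma~\ref{lem: C_3* crit forbidden triangles with 31}(\ref{caseC: 31-30-42}); and two $4_2$-vertices by Lemma~\ref{lem: C_3* crit forbidden triangles with 31}(\ref{caseC: 31-42-42}).

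For the first clause, let $v_1, v_2$ be adjacent $3_1$-vertices. Applying Lemma~\ref{lem: 31 in triangle} to $v_1$, its two $3^+$-neighbors are $v_2$ and some third vertex $x$, and the lemma gives a positive triangle on $\{v_1, v_2, x\}$; in particular $x$ is a common neighbor of $v_1$ and $v_2$, and it remains to show $x$ is wealthy. Lemma~\ref{lem: forbidden triangles with 31}(\ref{case: 31-31-4}) forbids $\deg(x)=4$, and since $v_1, v_2$ are both $3$-vertices, Lemma~\ref{lem: forbidden triangles with 31}(\ref{case: 31-30-30}) forbids $\deg(x)=3$; hence $\deg(x) \ge 5$. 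If $\deg(x) \ge 6$, then $x$ is wealthy. Otherwise $\deg(x) = 5$, and the counting observation again bounds the number of $2$-neighbors of $x$ by $3$, so $x \in \{5_0, 5_1, 5_2, 5_3\}$; Lemma~\ref{lem: C_3* crit forbidden triangles with 31}(\ref{caseC: 31-31-53}) then rules out $5_3$, leaving $x$ of type $5_{\le 2}$, which is wealthy.

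The main obstacle, as far as I can see, is simply definitional bookkeeping: pinning down exactly the set $\{3_0, 3_1, 4_2\}$ of ``feasible-yet-not-rich'' configurations at a triangle corner opposite a $3_1$-vertex, and then matching each resulting pair (or single third corner, in the wealthy-neighbor statement) with the correct forbidden-triangle lemma from Section~\ref{sec:PropertiesOfCriticalGraphs}. There is no new combinatorial idea beyond that.
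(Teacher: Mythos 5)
Your proof is correct and uses essentially the same ingredients as the paper's: Lemma~\ref{lem: 31 in triangle} to force the triangle, the count that a triangle corner opposite a $3_1$-vertex has at most $\deg-2$ two-neighbors, and Lemmas~\ref{lem: forbidden triangles with 31} and~\ref{lem: C_3* crit forbidden triangles with 31} to exclude the remaining non-rich (resp.\ non-wealthy) degree types. The only difference is organizational: you prove the rich-neighbor clause uniformly over all combinations in $\{3_0,3_1,4_2\}$ rather than splitting on whether $v$ has a $3_1$-neighbor, which is a harmless (arguably cleaner) rearrangement of the same case analysis.
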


\begin{proof}
Let $v$ be a $3_1$-vertex, and $w,u$ be its neighbors of degree at least $3$. By Lemma~\ref{lem: 31 in triangle}, $w$ and $u$ are adjacent. 

First, suppose that $v$ has a $3_1$-neighbor, say $u$. In this case, we shall show that $w$ is wealthy. By Lemma~\ref{lem: forbidden triangles with 31}, $w$ has degree at least $5$. If $w$ has degree exactly $5$, then $w$ is a $5_{\le 3}$-vertex because of its neighbors $v$ and $u$. But by Lemma~\ref{lem: C_3* crit forbidden triangles with 31}~(\ref{caseC: 31-31-53}), $w$ is not a $5_3$-vertex. This means $w$ is either a $5_{\le 2}$-vertex or a $6^+$-vertex, so $w$ is wealthy by definition.

Now suppose that $v$ does not have a $3_1$-neighbor. In this case, we shall show that one of $w$ or $u$ is rich. If either is of degree at least $5$, then it is rich. So we may assume each of $w$ and $u$ is of degree at most $4$. Suppose that one of them is of degree $3$, say $w$. By Lemma~\ref{lem: forbidden triangles with 31}~(\ref{case: 31-30-30}), $u$ has degree~$4$. By Lemma~\ref{lem: C_3* crit forbidden triangles with 31}~(\ref{caseC: 31-30-42}), $u$ is a $4_{\le 1}$-vertex, so $u$ is rich. So we may assume each of $w$ and $u$ is of degree exactly $4$. Then by Lemma~\ref{lem: C_3* crit forbidden triangles with 31}~(\ref{caseC: 31-42-42}), one of them is a $4_{\le 1}$-vertex, which is rich.
\end{proof}

Then we show that every $4_3$-vertex also has a ``good" neighbor. 

\begin{lemma}\label{lem:43Wealthy}
Every $4_3$-vertex is adjacent to a (unique) wealthy vertex $w$ in $\hat{G}$. Moreover, if $w$ is in a triangle, then it is of degree at least $6$.
\end{lemma}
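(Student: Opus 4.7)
The plan is to adopt the notation of Lemma~\ref{lem:43 in 4cycles}: let the three $2$-neighbors of $v$ be $u_1,u_2,u_3$ and let $x_i$ be the other endpoint of the path through $u_i$. By that lemma the $x_i$ are distinct and all adjacent to $w$, so $w$ has at least the four distinct neighbors $v, x_1, x_2, x_3$. Hence $\deg(w)\ge 4$, and the proof splits naturally on $\deg(w)\in\{4,5\}$ versus $\deg(w)\ge 6$.

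For the ``wealthy'' claim I would simply handle the three possibilities. If $\deg(w)=4$, then $N(w)=\{v,x_1,x_2,x_3\}$; since $v$ is a $4$-vertex and each $x_i$ is adjacent to the $2$-vertex $u_i$, Lemma~\ref{lem: forbidden configs} (forbidding $2_1$-vertices) forces each $x_i$ to have degree $\ge 3$, making $w$ a $4_0$-vertex. If $\deg(w)=5$, then Lemma~\ref{lem:$0-43-43$}(1) forbids $w$ from being $5_{\ge 2}$, so $w$ is $5_{\le 1}$, hence wealthy. The case $\deg(w)\ge 6$ is immediate from the definition. Uniqueness is automatic: every wealthy vertex has degree at least $4$, but $w$ is the only non-$2$ neighbor of $v$.

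For the second part, assume $w$ lies in a triangle. By Corollary~\ref{cor:unique triangle} this triangle is unique, and $v$ cannot participate in it (its only non-$2$ neighbor is $w$), so the other two triangle-vertices lie in $N(w)\setminus\{v\}$. I suppose for contradiction $\deg(w)\in\{4,5\}$. In the case $\deg(w)=4$, the triangle is $wx_ix_j$ for some $i,j$, and I consider $H$ induced by $\{v,w,u_1,u_2,u_3,x_1,x_2,x_3\}$: it has $8$ vertices and $11$ edges (the three $vu_k$, three $u_kx_k$, $wv$, three $wx_k$, and the chord $x_ix_j$), so $\rho(H)=2$. By Lemma~\ref{lem: main lemma}(\ref{case: G=H}), $\hat{G}=\hat{H}$; but then the third $x_k$ has degree exactly $2$ while still adjacent to the $2$-vertex $u_k$, a $2_1$-vertex forbidden by Lemma~\ref{lem: forbidden configs}.

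When $\deg(w)=5$, let $y$ be the fifth neighbor of $w$; the triangle chord is then one of $x_ix_j$ or $x_iy$. In either case taking $H$ to be the subgraph induced by $\{v,w,u_1,u_2,u_3,x_1,x_2,x_3,y\}$ gives $v(H)=9$ and $e(H)=12$, so $\rho(H)=3$. Lemma~\ref{lem: main lemma} then forces $\hat{G}=\hat{H}$ or $\hat{G}=P_2(\hat{H})$. The delicate (and main) part of the argument is to verify that every one of these four sub-configurations leads to a contradiction. If the chord is $x_ix_j$, then in $\hat{H}$ the vertex $y$ has degree $1$ (forbidden), and in $P_2(\hat{H})$ the added $2$-vertex must attach to $y$ (otherwise $y$ still has degree $1$) producing either a $2_1$-vertex at $y$ or a parallel edge, all forbidden by Lemmas~\ref{lem: forbidden edge} and~\ref{lem: forbidden configs}. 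If the chord is $x_iy$, say $x_1y$, then both remaining $x_k$ ($k=2,3$) are $2_1$-vertices in $\hat{H}$, and the only way to repair both in $P_2(\hat{H})$ is to attach the new $2$-vertex to $x_2$ and $x_3$, which then turns each of them into a $3_2$-vertex, forbidden by Lemma~\ref{lem: no 32 vertex}. Thus $\deg(w)\ge 6$, as required. The bookkeeping across these four sub-cases is the only real obstacle; no new technical tools are needed.
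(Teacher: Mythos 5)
Your argument is correct, and the first half (wealthiness and uniqueness) is essentially the paper's proof: both rest on Lemma~\ref{lem:43 in 4cycles} giving the four distinct non-degree-$2$ neighbors $v,x_1,x_2,x_3$ of $w$, with Lemma~\ref{lem: forbidden configs} forcing each $x_i$ to have degree at least $3$. Where you genuinely diverge is the ``moreover'' part. The paper disposes of it in one line: since $v$ cannot lie in a triangle with $w$, any triangle at a $w$ of degree at most $5$ has the form $wx_iz$ with $z$ another neighbor of $w$, and this triangle shares the edge $wx_i$ with the $4$-cycle $wx_iu_iv$, producing a copy of $\Theta_3$, which is forbidden by Lemma~\ref{lem: forbidden theta subgraphs}. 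You instead run a potential computation on the induced subgraph $H$ (getting $\rho(H)=2$ or $3$), invoke Lemma~\ref{lem: main lemma} to force $\hat{G}=\hat{H}$ or $\hat{G}=P_2(\hat{H})$, and then kill each resulting configuration with the forbidden-vertex lemmas (Lemmas~\ref{lem: forbidden configs} and~\ref{lem: no 32 vertex}) and Corollary~\ref{cor:unique triangle}. Your case analysis checks out --- in particular Corollary~\ref{cor:unique triangle} does guarantee at most one edge among $\{x_1,x_2,x_3,y\}$, so your edge counts for $H$ are right, and every branch ends in a forbidden $1$-, $2_1$-, or $3_2$-vertex. The trade-off is that your route re-derives by hand what Lemma~\ref{lem: forbidden theta subgraphs} already packages: the paper's $\Theta_3$ observation is shorter and avoids the potential machinery entirely, while your version is more self-contained at the configuration level but substantially longer. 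One tiny inaccuracy: in the chord-$x_ix_j$ subcase you mention the possibility of ``a parallel edge,'' but the definition of $P_2(\hat{H})$ already requires the two new edges to end at distinct vertices, so only the $2_1$-vertex outcome actually arises; this does not affect the conclusion.
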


\begin{proof}
Let $v$ be a $4_3$-vertex, $w$ its non-$2$-neighbor, and whose neighborhood is otherwise labeled as in Figure~\ref{fig:4_3}. Since $2_1$-vertices are forbidden by Lemma~\ref{lem: forbidden configs}, for $i\in \{1,2,3\}$ each $x_i$ has degree at least $3$. By Lemma~\ref{lem:43 in 4cycles}, $w$ is adjacent to each distinct $x_i$. This means $w$ is of degree at least $4$ and moreover, none of its four labeled neighbors is of degree $2$. By the definition, $w$ is a wealthy vertex.

For the moreover part, observe that if $w$ is in a triangle and is of degree at most $5$, then there must be a copy of $\Theta_3$, contradicting Lemma~\ref{lem: forbidden theta subgraphs}.
\end{proof}

Now we are ready to apply the discharging method. We begin with every vertex having a charge equal to its degree. Thus $$\sum\limits_{v\in V(\hat{G})}c(v)=\sum\limits_{v\in V(\hat{G})}d(v)=2e(\hat{G})\leq 3v(\hat{G})-2.$$ We then apply the following three discharging rules to each vertex $v$ in $\hat{G}$.
\begin{enumerate}[(Rule 1)]
    \item\label{R1} {\it Every $3^+$-vertex gives a charge of $\frac{1}{2}$ to each of its $2$-neighbors.}
    \item\label{R2} {\it Every rich vertex gives a charge of $\frac{1}{2}$ to each of its $3_1$-neighbors.}
    \item\label{R3} {\it Every wealthy vertex gives a charge of $\frac{1}{2}$ to each of its $4_3$-neighbors.}
\end{enumerate}
We claim that after discharging, each vertex has a charge of at least $3$, thus $$\sum\limits_{v\in V(\hat{G})}c(v)=\sum\limits_{v\in V(\hat{G})}c'(v)\geq 3v(\hat{G}),$$ a contradiction. 

Let $v \in V(\hat{G})$. We consider three cases based on the type of $v$.

\medskip
\noindent
{\bf Case (1)}. Assume that $v$ is not rich. This means $v$ is one of the following types: a $2$-vertex, a $3$-vertex, a $4_{\ge2}$-vertex, or a $5_{\ge4}$-vertex. The vertex $v$ may give charge only to its $2$-neighbors (if exists) via Rule~\ref{R1}, and may receive charge via Rule~\ref{R2} or \ref{R3}.

If $v$ has degree $2$, then by Lemma~\ref{lem: forbidden configs}, there is no $2_1$-vertex and $v$ has two neighbors of degree at least $3$. By Rule~\ref{R1}, $v$ receives a charge of $\frac{1}{2}$ from each of them. And since $v$ gives no charge to any neighbor, after discharging $v$ has a charge of $3$.

If $v$ is one of $3_0$-vertex, $4_2$-vertex, or $5_4$-vertex, then it has sufficient charge to give to each of its $2$-neighbors (if exists) such that it ends with a charge of $3$. 

By Lemma~\ref{lem: no 32 vertex}, there exists no $3_2$-vertex and by lemma~\ref{lem: forbidden configs}, there exists no $4_4$-vertex and no $5_5$-vertex. Hence, the only remaining cases are when $v$ is a $3_1$-vertex or $4_3$-vertex. By Lemmas~\ref{lem:31+rich or wealthy} and \ref{lem:43Wealthy}, via Rule~\ref{R2} or \ref{R3}, each of these vertices receives a charge of at least $\frac{1}{2}$ from some rich or wealthy neighbor. Therefore, after giving a charge $\frac{1}{2}$ to each of its $2$-neighbors and receiving a charge $\frac{1}{2}$ from its rich or wealthy neighbor, it ends up with a charge of $3$.

\medskip
\noindent
{\bf Case (2)}. Assume that $v$ is rich but not wealthy. This means $v$ is either a $4_1$-vertex or $5_3$-vertex, and thus it may only give charge to its neighbors via Rules~\ref{R1} and \ref{R2}. By Lemma~\ref{lem: 31 in triangle}, if $v$ is adjacent to a $3_1$-vertex $x$, then it is in a triangle with $x$. By Corollary~\ref{cor:unique triangle}, $v$ is in at most one triangle. If $v$ is adjacent to two $3_1$-vertices, say $x,x'$, then $v,x,x'$ are in a triangle. But by the moreover part of Lemma~\ref{lem:31+rich or wealthy}, this means $v$ is wealthy, a contradiction. Hence, $v$ is adjacent to at most one $3_1$-vertex.
This means $v$ gives a charge of at most $\frac{1}{2}$ via Rule~\ref{R2}. Thus after giving a charge $\frac{1}{2}$ to its $2$-neighbors via Rule~\ref{R1}, it still has a charge of at least $3$.

\medskip
\noindent
{\bf Case (3)}. Assume that $v$ is wealthy. This means $v$ is one of the following types: a $6^+$-vertex, a $4_0$-vertex, or a $5_{\leq 2}$-vertex. If $v$ is a $6^+$-vertex, then by Rules~\ref{R1}, \ref{R2}, and \ref{R3}, after discharging it always has a charge of $d(v)-\frac{1}{2}d(v)\geq 3$.

Hence, we may assume $v$ is either a $4_0$-vertex, or a $5_{\leq 2}$-vertex. Note that by Lemma~\ref{lem:43Wealthy}, Rules~\ref{R2} and \ref{R3} cannot apply at the same time to $v$. By Corollary~\ref{cor:unique triangle}, $v$ is in at most one triangle and can therefore give a charge of at most $1$ via Rule~\ref{R2}. Thus after applying Rules~\ref{R1} and \ref{R2}, $v$ is left with a charge of at least $3$. It remains only to consider when Rule~\ref{R3} is applied. By Lemma~\ref{lem:$0-43-43$} (\ref{43_0}), if $v$ is a $5_2$-vertex, it has no $4_3$-neighbor, and so gives no charge under Rule \ref{R3}. By Lemma~\ref{lem:$0-43-43$}~(\ref{43_1}), $v$ is adjacent to at most one $4_3$-vertex if it is a $4_0$-vertex, and by Lemma~\ref{lem:$0-43-43$}~(\ref{43_2}), $v$ is adjacent to at most three $4_3$-vertices if it is a $5_{\leq 1}$-vertex. This means after applying Rules~\ref{R1} and \ref{R3}, $v$ still has a charge of at least $3$.

We are done.

\section{Tightness and discussion}\label{sec:Conclusion}

We have seen in Lemma~\ref{lem:hatW} that $\hat{W}$ is a $C^*_3$-critical signed graph satisfying that $e(\hat{W})= \frac{3v(\hat{W})-1}{2}$. Now we provide a sequence of $C^*_3$-critical signed graphs with edge density $\frac{3}{2}$, showing that the bound in Theorem~\ref{thm:main result} is asymptotically tight.

Let $\hat{S}_{k}$ be the connected signed graph obtained from a negative cycle of length $k$ by adding a set $S$ of $k$ vertices and $2k$ edges so that each edge of the negative cycle is contained in a positive triangle with a distinct vertex in $S$, as in Figure \ref{fig:couterK8}. 
It is easy to observe that $e(\hat{S}_k)= \frac{3v(\hat{S}_k)}{2}.$ Note that in any edge-sign preserving homomorphism of a switching-equivalent $\hat{S}_k$ to $C_3^*$, every edge of each triangle needs to be positive. Hence, such a signature does not exist because $\hat{S}_k$ has a negative $k$-cycle which requires at least one negative edge appearing in one triangle. Therefore $\hat{S}_k \not\to C_3^*$. Furthermore, there is a signature of $\hat{S}_k$ with exactly two negative edges (both in the same triangle). If any edge $e$ is deleted, then by symmetry we may assume it is one of those two negative edges, and with this signature $\hat{S}_k- e \to C_3^*$. So $\hat{S}_k$ is $C_3^*$-critical.

\begin{figure}[h]
	\centering
	\begin{minipage}[t]{.34\textwidth}
	\centering
	\begin{tikzpicture}[>=latex,
	roundnode/.style={circle, draw=black!80, inner sep=0mm, minimum size=3.5mm},
        roundnode2/.style={circle, draw=black!80, inner sep=0mm, minimum size=3mm},
	scale=0.76] 
	\node [roundnode] (A) at (0:2cm){};
		\node[roundnode2] (AB) at (18:2.67cm){};
	\node [roundnode] (B) at (36:2cm){};
		\node[roundnode2] (BC) at (54:2.67cm){};
	\node [roundnode] (C) at (72:2cm){};
		\node[roundnode2] (CD) at (90:2.67cm){};
	\node [roundnode] (D) at (108:2cm){};
		\node[roundnode2] (DE) at (126:2.67cm){};
	\node [roundnode] (E) at (144:2cm){};
		\node[roundnode2] (EF) at (162:2.67cm){};
	\node [roundnode] (F) at (180:2cm){};
		\node[roundnode2] (FG) at (198:2.67cm){};
	\node[roundnode] (G) at (216:2cm){};
		\node[roundnode2] (GH) at (236:2.67cm){};
	\node[roundnode] (H) at (252:2cm){};
	\node [roundnode] (I) at (288:2cm){};
		\node[roundnode2] (IJ) at (306:2.67cm){};
	\node [roundnode] (J) at (324:2cm){};
		\node[roundnode2] (JA) at (342:2.67cm){};
	
	\foreach \t in {18,54,90,126,162,198,234,306,342}{	
		\node at (\t:2.2cm){$+$};}
	\node at (0:0cm){$-$};
			
	\draw[line width = 0.4mm, gray] (A)--(B)--(C)--(D)--(E)--(F)--(G)--(H);
	
	\draw[line width = 0.4mm, gray] (A)--(AB)--(B)--(BC)--(C)--(CD)--(D)--(DE)--(E)--(EF)--(F)--(FG)--(G)--(GH)--(H);
	
	\draw[line width = 0.4mm, gray] (I)--(IJ)--(J)--(JA)--(A);

	\draw[line width = 0.4mm, dotted, gray] (H)--(I);
	\draw[line width = 0.4mm, gray]	(I)--(J)--(A);
	\end{tikzpicture}
	\caption{Signed graph $\hat{S}_{k}$}
	\label{fig:couterK8}
	\end{minipage}
\begin{minipage}[t]{.65\textwidth}
\centering
   \begin{subfigure}[t]{.45\textwidth}
   \centering
		\begin{tikzpicture}
		[>=latex,
	roundnode/.style={circle, draw=black!80, inner sep=0mm, minimum size=4mm},scale=0.34] 
		\foreach \i in {1,2,3,4,5}
		{
		\draw[rotate=72*(\i-1)] (0, 5) node[roundnode] (x_\i){\scriptsize ${v_{\i}}$};
		}
		
		\foreach \i in {1,2,3,4,5}
		{
		\draw[rotate=72*(\i-1)] (0, 3) node[roundnode] (y_\i){\scriptsize ${u_{\i}}$};
		}
		
		\foreach \i/\j in {1/2, 2/3, 3/4, 4/5, 5/1}
		{
		\draw [line width =0.4mm, blue, -] (x_\i) -- (x_\j);
		}
		
		\foreach \i in {1,2,3,4,5}
		{
		\draw [line width =0.4mm, blue, -] (x_\i) -- (y_\i);
		}
		
		\foreach \i/\j in {1/3, 3/5, 5/2, 2/4, 4/1}
		{
		\draw  [densely dotted, line width =0.4mm, red, -] (y_\i) -- (y_\j);
		}
        \end{tikzpicture}
		\end{subfigure}
       \begin{subfigure}[t]{.45\textwidth}
       \centering
		\begin{tikzpicture}
		[>=latex,
	roundnode/.style={circle, draw=black!80, inner sep=0mm, minimum size=4mm},scale=0.34] 
		\foreach \i in {1,2,3,4,5}
		{
		\draw[rotate=72*(\i-1)] (0, 5) node[roundnode] (x_\i){\scriptsize ${v_{\i}}$};
		}
		
		\foreach \i in {1,2,3,4,5}
		{
		\draw[rotate=72*(\i-1)] (0, 3) node[roundnode] (y_\i){\scriptsize ${u_{\i}}$};
		}
		
		\foreach \i/\j in {1/2, 3/4, 5/1}
		{
		\draw [line width =0.4mm, blue, -] (x_\i) -- (x_\j);
		}

                \foreach \i/\j in {2/3, 4/5}
		{
		\draw [densely dotted, line width =0.4mm, red, -] (x_\i) -- (x_\j);
		}
		
		\foreach \i in {1,2,3,4,5}
		{
		\draw [line width =0.4mm, blue, -] (x_\i) -- (y_\i);
		}
		
		\foreach \i/\j in {2/5}
		{
			\draw  [densely dotted, line width =0.4mm, red, -] (y_\i) -- (y_\j);
		}
                \foreach \i/\j in {1/3, 3/5, 2/4, 4/1}
		{
			\draw  [line width =0.4mm, blue, -] (y_\i) -- (y_\j);
		}
       \end{tikzpicture}
		\end{subfigure}
		\caption{Signed  Petersen graph $\hat{P}$}
		\label{fig:Petersen}
	\end{minipage}
\end{figure} 

Next, we shall prove that the girth bound in Corollary~\ref{cor:Planar+ProjectivePlanar} for the class of signed projective-planar graphs is tight. We show that there is a signed projective-planar graph of girth $5$ which does not admit a homomorphism to $C^*_3$ in the next lemma. 

Let $\hat{P}$ be a signed Petersen graph with a signature such that the edges of one $5$-cycle are negative and all the other edges are positive, depicted in Figure~\ref{fig:Petersen}. We note that $\hat{P}$ is a signed projective-planar graph of girth $5$.

\begin{lemma}\label{lem:Petersen}
The signed graph $\hat{P}$ is $C^*_3$-critical.
\end{lemma}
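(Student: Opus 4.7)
The plan is to verify that $\hat{P}$ meets all three conditions of Definition~\ref{def:H-critical}: (i) $g_{ij}(\hat{P}) \geq g_{ij}(C_3^*)$ for every parity type $ij$; (ii) $\hat{P} \not\to C_3^*$; and (iii) every proper subgraph of $\hat{P}$ admits a homomorphism to $C_3^*$. For (i), note that $(g_{00},g_{01},g_{10},g_{11})(C_3^*)=(2,3,4,1)$. Since $\hat{P}$ has girth $5$, $g_{00}(\hat{P}) \geq 2$ and $g_{01}(\hat{P}) \geq 5$ are immediate; the pentagram is a negative $5$-cycle so $g_{11}(\hat{P})=5$; and a direct check of short closed walks shows $g_{10}(\hat{P}) \geq 4$.

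For (ii), the main step, I would suppose for contradiction that $\hat{P}\to C_3^*$. By Proposition~\ref{prop:Homo} there is a signature $\sigma'$ switching-equivalent to the given one such that $(P,\sigma')$ admits an edge-sign preserving homomorphism $\varphi$ to $C_3^*$. Since the positive edges of $C_3^*$ form a triangle and the negative edges are only loops, $\varphi$ is exactly a $3$-coloring $c\colon V(P)\to\{0,1,2\}$ whose monochromatic edges are the negative $\sigma'$-edges. By Lemma~\ref{lem:SwitchingEquivalent}, for every cycle $C$ of $P$ we have $\sigma(C)=(-1)^{m(C)}$, where $m(C)$ counts the monochromatic edges of $C$ under $c$. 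I would then apply this to the twelve $5$-cycles of the Petersen graph, namely the outer pentagon (sign $+$), the pentagram (sign $-$), five ``$(2,1)$-cycles'' of the form $v_iv_{i+1}v_{i+2}u_{i+2}u_i$ (sign $-$), and five ``$(1,2)$-cycles'' of the form $v_iv_{i+1}u_{i+1}u_{i+3}u_i$ (sign $+$). Writing $x_i=[c(v_i)=c(v_{i+1})]$, $y_i=[c(v_i)=c(u_i)]$, and $z_i=[c(u_i)=c(u_{i+2})]$, this yields a system of $\mathbb{F}_2$-parity constraints, supplemented by the combinatorial requirement that the $(x,y,z)$-pattern actually arises from a proper $3$-coloring. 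Exploiting the dihedral $D_5$ symmetry of $\hat{P}$ to reduce to a small number of cases (and, if necessary, invoking additional constraints from $6$-cycles such as $v_1v_2u_2u_5u_3u_1$), one shows that no valid $c$ exists. I expect this case analysis to be the main technical hurdle.

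For (iii), by the $D_5$ symmetry of $\hat{P}$ there are only three edge orbits: outer pentagon edges, spokes, and pentagram edges. It therefore suffices to exhibit, for one representative $e$ in each orbit, an explicit homomorphism $\hat{P}-e\to C_3^*$, given by a suitable switching of $\sigma$ followed by a $3$-coloring of the contracted graph obtained by identifying endpoints of the resulting negative edges. In each of the three cases, deleting $e$ removes the obstruction identified in (ii), and one can write down such a coloring directly and verify it covers all remaining cycle constraints.
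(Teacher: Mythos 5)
Your reduction in part (ii) is set up correctly: an edge-sign preserving homomorphism of $(P,\sigma')$ to $C_3^*$ is exactly a map $c\colon V(P)\to\{0,1,2\}$ whose monochromatic edges are the negative $\sigma'$-edges, and by Lemma~\ref{lem:SwitchingEquivalent} it suffices to impose the parity condition $m(C)\equiv 0 \pmod 2$ iff $C$ is positive on a generating set of the cycle space; the twelve $5$-cycles of the Petersen graph do generate it, so the $6$-cycle constraints you hold in reserve are not even needed. The problem is that you stop exactly where the lemma begins: the assertion that ``one shows that no valid $c$ exists'' \emph{is} the statement $\hat{P}\not\to C_3^*$, and you defer it entirely to an unexecuted case analysis that you yourself flag as the main technical hurdle. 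This is a genuine gap, not a stylistic omission --- nothing in your write-up certifies that the parity system plus the properness requirement is actually infeasible. For comparison, the paper discharges this step with a structured argument: the image of a positive $5$-cycle must cover all three positive edges of $C_3^*$ while the image of a negative $5$-cycle cannot; this rules out a negative $5$-cycle having five, four, or three consecutive vertices in a single color class; since every path on three vertices of $P$ lies on a negative $5$-cycle, the switched signature can have no two incident negative edges, which pins down a unique signature up to isomorphism, and that one is eliminated by direct propagation. You would need to supply an argument of comparable substance (your $\mathbb{F}_2$-system could in principle be pushed through, but the properness constraint is not linear, so the ``small number of cases'' still has to be exhibited).

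Two smaller points. For (iii), your plan (one explicit coloring per $D_5$-orbit of edges) is a legitimately different route from the paper, which instead gets criticality for free from the already-proved density theorem: $\rho(\hat{P}-e)=2>1$, and the degree sequence of $\hat{P}-e$ prevents any proper subgraph from having potential at most $1$, so $\hat{P}-e$ contains no $C_3^*$-critical subgraph and hence maps to $C_3^*$. The paper's version buys you (iii) with no computation at all; yours requires three explicit colorings that you again do not write down. Part (i) is routine and fine.
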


\begin{proof}
It is easy to see that for any edge $e$, $\hat{P}-e$ satisfies that
$\rho(\hat{P}-e)=2$. Hence, by Theorem~\ref{thm:main result alt} and because of the degrees of the vertices of $\hat{P}-e$, it
does not contain any $C^*_3$-critical signed graph as a subgraph and thus it admits a homomorphism to $C^*_3$.

Proving that $\hat{P}$ does not admit a homomorphism to $C^*_3$ is a bit more technical.  
For a contradiction, assume that there exists such a mapping. First, observe that any mapping of a positive $5$-cycle to $C^*_3$ covers all positive edges of the target while it is the opposite for negative $5$-cycles (at least one positive edge of $C^*_3$ is not used as the homomorphic image of an edge of a negative $5$-cycle). By the
high symmetry of the signed Petersen graph, observe also that any negative $5$-cycle can be isomorphically seen as the central cycle, denoted by $S=u_1u_3u_5u_2u_4$, depicted in
Figure~\ref{fig:Petersen}. Let us study how a negative $5$-cycle can be mapped to $C^*_3$ and let $a,b$ and $c$ denote the vertices of $C^*_3$.

\begin{enumerate}[(1)]
\item All vertices $u_1$ up to $u_5$ of the negative cycle $S$ are mapped to one vertex of $C^*_3$, say $a$. Since $v_1u_1u_4u_2v_2$ is a positive $5$-cycle, it should cover all the positive edges of the target $C^*_3$ so $v_1$ and $v_2$ should be mapped to $b$ and $c$ respectively. The same argument stands for the pairs $v_2v_3$, $v_3v_4$, $v_4v_5$, and $v_5v_1$. This is impossible since the $5$-cycle is not bipartite. So no negative $5$-cycle has all its vertices mapped to a single vertex of $C^*_3$.

\item Four vertices of $S$, say $u_1$ up to $u_4$, are mapped to one vertex of $C^*_3$, say $a$ and the vertex $u_5$ is mapped to $b$. Then, considering the positive cycles $u_1u_3v_3v_4u_4$ and $u_1v_1v_2u_2u_4$ whose homomorphic images should cover all the positive edges of the target signed graph $C^*_3$, we may conclude that $\{v_1,v_2\}$ is mapped to $\{b,c\}$ and so does $\{v_3,v_4\}$. Now if $v_1$ is mapped to $b$ and $v_2$ to $c$, then the homomorphism image of the negative $5$-cycle $v_1v_2u_2u_5v_5$ already covers all positive edges of $C^*_3$ which is a contradiction with our first observation. The same result holds for the pair $\{v_3,v_4\}$. Thus it implies that both $v_1$ and $v_4$ are mapped to $c$ while both $v_2$ and $v_3$ are mapped to $b$. This prevents the positive (outer) $5$-cycle $v_1v_2v_3v_4v_5$ to be mapped to $C^*_3$, as wherever $v_5$ is mapped it will not cover the edge $ab$. So no negative $5$-cycle has four of its vertices mapped to a single vertex of $C^*_3$.
 
 \item Three consecutive of $S$, say $u_3,u_1$ and $u_4$, are mapped to $a$. In this case, both other vertices $u_2$ and $u_5$ must be mapped to the same vertex, say $b$, to preserve the sign of the cycle. Since $v_3u_3u_1u_4v_4$ is a positive $5$-cycle, we may assume without loss of generality that $v_3$ is mapped to $b$ while $v_4$ is mapped to $c$. But then the edges of the negative $5$-cycle $u_2v_2v_3v_4u_4$ already cover all positive edges of the target $C^*_3$ which is a contradiction. So no three consecutive vertices of a negative $5$-cycle can be mapped to a single vertex of $C^*_3$.  
\end{enumerate}

Most of the technical part is done. Now observe that any three
consecutive vertices (i.e., vertices that induce a path) of $\hat{P}$ are involved in a negative $5$-cycle. Together with our previous case study, this implies that no three vertices of a $3$-path can be mapped to a single vertex of
$C^*_3$. In other words, any switching-equivalent signature $\pi$ of $\hat{P}$ that allows an
edge-sign preserving homomorphism of $(P, \pi)$ to $C^*_3$ does not have two incident negative edges. Up to isomorphism, only one switching-equivalent signature $\pi$ of $\hat{P}$ achieves this and it is on the right of Figure~\ref{fig:Petersen}. If such an edge-sign preserving homomorphism was possible, then $v_2$ and $v_3$ are mapped to $a$, $v_4$ and $v_5$ are mapped to $b$, and $u_2$ and $u_5$ are mapped to $c$. In order to
prevent the vertices of any $3$-path being mapped to a single vertex, it enforces $u_3$ to be mapped to $b$, $u_4$ to $a$, and $v_1$ to $c$. But then $u_1$ cannot be mapped anywhere while preserving the sign of its incident edges. This concludes the proof that $\hat{P}$ does not admit a homomorphism to $C^*_3$.
\end{proof}

Actually, in a personal communication of the fifth author with R. Naserasr and S. Mishra, it has been shown that $\chi_c(\hat{P})=\frac{10}{3}$ which also implies that $\hat{P}\not\to C^*_3$. Here we give the shortest proof that we could achieve for the sake of completeness.

However, we don't know if the girth bound of $6$ in Corollary~\ref{cor:Planar+ProjectivePlanar} is tight for the class of signed planar graphs. 

\medskip
We have seen in Theorem~\ref{thm:C3critical} that if a graph $G$ satisfies that $e(G)<\frac{5v(G)-2}{3}$, then $G\to C_3$. Hence, any graph $G$ with average degree less than $\frac{10}{3}-\frac{4}{3v(G)}$ is $3$-colorable. It implies Brook's theorem when $\Delta=3$, i.e., except $K_4$ (where $v(K_4)=4$), any graph on more than $4$ vertices with maximum degree $3$ is $3$-colorable. The previous argument means that the $C_3$-critical graph with $\Delta=3$ is unique and it is $K_4$. We may consider the analogous problem for $C^*_3$-critical signed graphs. It has been proved in Theorem~\ref{thm:main result} that if a signed graph $\hat{G}$ satisfies that $e(\hat{G})<\frac{3v(\hat{G})-1}{2}$, then $\hat{G}\to C^*_3$. Thus any signed graph with its average degree less than $3-\frac{1}{v(G)}$ is circular $3$-colorable. We pose the following question:

\begin{problem}
Are there finitely many $C^*_3$-critical signed graphs with $\Delta=3$?
\end{problem}

\paragraph{Acknowledgement.} This work was initiated as a group project at the ANR-HOSIGRA Workshop, held at CAES du CNRS La Villa Clythia in Fr\'ejus, France in May 2022. The authors wish to thank ANR project HOSIGRA (ANR-17-CE40-0022) for providing this support.

The second and third authors were partially supported by the Natural Sciences and Engineering Research Council of Canada (NSERC). The fourth author was partially funded by IFCAM project ``Applications of graph homomorphisms'' (MA/IFCAM/18/39) for this work. The fifth author was partially supported by European Union's Horizon 2020 research and innovation program under the Marie Sklodowska-Curie grant agreement No 754362.

\end{document}